 \newcommand{\idd}{\mathbb{1}}
\newtheorem{thm}{Theorem}
\newtheorem{prop}{Proposition}%
\newtheorem{lem}{Lemma}%
\newtheorem{cor}{Corollary}%
\theoremstyle{remark}%
\newtheorem{rem}{Remark}%
\theoremstyle{definition}%
\newtheorem{defn}{Definition}%
\newtheorem{assumption}{Assumption}%
\newtheorem{example}{Example}%
\title{Combinatorial Reduction of Set Functions and Matroid Permutations through Minor 
Product Assignment}
\author[1,2]{Mario Angelelli}
\affil[1]{Department of Innovation Engineering, University of Salento, Complesso Ecotekne - Via per Monteroni, Lecce, 73100, Italy}
\affil[2]{INdAM (National Institute of Higher Mathematics) - GNSAGA, Italy}
\date{}
\begin{document}
\maketitle 

\begin{abstract}
We introduce an algebraic model, based on the determinantal expansion of the product of two matrices, to test combinatorial reductions of set functions. Each term of the determinantal expansion is deformed through a monomial factor in $d$ indeterminates, whose exponents define a $\mathbb{Z}^{d}$-valued set function. By combining the Grassmann-Pl\"{u}cker relations for the two matrices, we derive a family of sparse polynomials, whose factorisation properties in a Laurent polynomial ring are studied and related to information-theoretic notions. 

Under a given genericity condition, we prove the equivalence between combinatorial reductions and determinantal expansions with invertible minor products; specifically, a deformation returns a determinantal expansion if and only if it is induced by a diagonal matrix of units in $\mathbb{C}(\mathbf{t})$ acting as a kernel in the original determinant expression. This characterisation supports the definition of a new method for checking and recovering combinatorial reductions for matroid permutations. 
\end{abstract}

\section{\label{sec: Introduction} Introduction}

A natural question about set functions regards conditions 
that imply a form of complexity reduction: given sets $\mathcal{X}$ and $M$, a family $\mathfrak{X}$ of subsets of $\mathcal{X}$, and a map $\Psi:\,\mathfrak{X}\longrightarrow M$, we ask whether there is an underlying function $\psi:\,\mathcal{X}\longrightarrow M$ such that, for all $\mathcal{I}\in\mathfrak{X}$, we can recover $\Psi(\mathcal{I})$ from
$\{\psi(\alpha):\,\alpha\in\mathcal{I}\}$. The additivity axiom for countable probability distributions is a fundamental example of such a condition; but we can extend this investigation to more general cases where $M$ is a $R$-module for a ring $R$. In this way, we can reconstruct $\Psi$ from $\psi$ by taking advantage of algebraic relations. 

This work focuses on determinant expansions as a means to encode information about set functions, relying on maximal minors of two $(k\times n)$-dimensional full-rank complex matrices to specify the type of combinatorial reduction. In fact, Grassmann-Pl\"{u}cker relations among these minors are algebraic conditions entailing a reduction for $k$-forms on $\mathbb{C}^{n}$ \citep[Ch.3.1]{GKZ1994}: formally, they characterise totally de\-com\-po\-sa\-ble $k$-forms $v_{i_{1}}\wedge\dots \wedge v_{i_{k}}\in\bigwedge^{k}\mathbb{C}^{n}$, which are labelled by $k$-subsets of $\{i_{1},\dots,i_{k}\}\subseteq\{1\,\dots,n\}$ and are expressed as the wedge product of $1$-forms $v_{i_{1}},\dots,v_{i_{k}}$. 

The motivation behind the study of such determinantal relations is the wide range of applications of combinatorial, algebraic, and geometric properties of determinantal varieties and Grassmann-Pl\"{u}cker relations. To elucidate the main notions underlying this research, we start by recalling the explicit connection between determinantal varieties and one of these application areas. 
\subsection{Relations with previous work: determinantal expansions of Wronskian soliton solutions}
\label{subsec: determinantal expansions and soliton solutions}
%
The bilinear form of the Kadomtsev-Petviashvili (KP) II hierarchy, which is a family of PDEs expressing Grassmann-Pl\"{u}cker relations for an infinite-dimensional space \citep{Harnad2020},  
includes a special class of solutions ($\tau$-functions) with a rich combinatorial structure \citep{Kodama2017}, namely, Wronskian solutions of the type  
 \begin{equation}
\tau(\mathbf{x})=\det(\mathbf{A}\cdot{\ensuremath{\Theta}}(\mathbf{x})\cdot\mathbf{K})
\label{eq: Wronskian solutions}
 \end{equation}
where $\mathbf{A}$ is a matrix of constant coefficients, 
 \begin{equation}
\Theta:=\mathrm{diag}\left(\exp\left(\sum_{r=1}^{d}\kappa_{1}^{r}x_{r}\right),\dots,\exp\left(\sum_{r=1}^{d}\kappa_{n}^{r}x_{r}\right)\right)
\label{eq: diagonal matrix of exponents}
 \end{equation}
and $\mathbf{K}$ is the Vandermonde matrix associated with the $n$-tuple $(\kappa_{1},\dots,\kappa_{n})$. For these solutions of the KP II hierarchy, the determinant (\ref{eq: Wronskian solutions}) is converted into an exponential sum through the well-known Cauchy-Binet expansion for two $(k\times n)$-dimensional matrices $\mathbf{A},\mathbf{K}^{\mathtt{T}}$ over a ring ($k\leq n$) 
 \begin{equation}
\det(\mathbf{A}\cdot\mathbf{K})=\sum_{\mathcal{I}\in \wp_{k}[n]}\Delta_{\mathbf{A}}(\mathcal{I})\cdot\Delta_{\mathbf{K}}(\mathcal{I}).
\label{eq: Cauchy-Binet expansion}
 \end{equation} 
where $\wp_{k}[n]:=\{\mathcal{I}\subseteq \{1,\dots,n\}:\,\#\mathcal{I}=k\}$ and $\Delta_{\mathbf{A}}(\mathcal{I})$ (respectively, $\Delta_{\mathbf{K}}(\mathcal{I})$) is the maximal minor of $A$ extracted from columns (respectively, rows) indexed by $\mathcal{I}\subseteq\{1,\dots,n\}$. From (\ref{eq: Cauchy-Binet expansion}), the solution (\ref{eq: Wronskian solutions}) is expressed as  
 \begin{equation}
\det(\mathbf{A}\cdot{\ensuremath{\Theta}}(\mathbf{x})\cdot\mathbf{K})=\sum_{\mathcal{I}\in \wp_{k}[n]}\Delta_{\mathbf{A}}(\mathcal{I})\cdot\Delta_{\mathbf{K}}(\mathcal{I})\cdot\mathrm{e}^{\sum_{\alpha\in\mathcal{I}}\mathbf{x}\cdot\mathbf{K}_{\{\alpha\}}}
\label{eq: Cauchy-Binet expansion of Wronskian}
 \end{equation} 
so the $\tau$-function is a superposition of elementary waves (exponential functions) whose phases are set functions defined by the soliton parameters. 

The combinatorial properties derived from (\ref{eq: Cauchy-Binet expansion of Wronskian}) (see e.g. \cite{Kodama2017}) suggest broadening the exploration of the information provided by \emph{deformations} of the individual terms of such expansions that are compatible with the determinantal constraints, i.e. return another determinant expansion. The introduction of deformations is a way to investigate the rigidity of a given structure in terms of possible configurations that are consistent with algebraic constraints. In \citep{Angelelli2019}, we focused on the information-theoretic aspects of expressions (\ref{eq: Cauchy-Binet expansion of Wronskian}) preserving the KP II equation under discrete deformations 
 \begin{equation}
\Delta_{\mathbf{A}}(\mathcal{I})\mapsto\sigma_{\mathcal{I}}\cdot\Delta_{\mathbf{A}}(\mathcal{I}),\quad\sigma_{\mathcal{I}}\in\{1,-1\}\quad \mathcal{I}\in\wp_{k}[n].
\label{eq: sign of minor}
 \end{equation}
It should be noted that other determinantal expansions and their variations have been addressed in the literature, highlighting complexity aspects and computational advantages arising from the symmetries of the determinant, compared to other immanants such as the permanent \citep{Valiant1979}.

\subsection{Scope of this work: from algebraic to combinatorial conditions on set functions}
\label{subsec: scope of this work}

In this paper, we extend the previous investigation by focusing on algebraic and combinatorial properties entailed by deformations  
\begin{equation}
\Delta_{\mathbf{A}}(\mathcal{I})\cdot\Delta_{\mathbf{K}}(\mathcal{I}) \mapsto \Delta_{\mathbf{A}}(\mathcal{I})\cdot\Delta_{\mathbf{K}}(\mathcal{I}) \cdot c_{\mathcal{I}}\cdot \mathbf{t}^{\mathbf{e}(\mathcal{I})}, \quad \mathcal{I}\in\wp_{k}[n],\,\Delta_{\mathbf{A}}(\mathcal{I})\neq 0,\, c_{\mathcal{I}}\in\mathbb{C}\setminus\{0\}
\label{eq: toric deformations} 
\end{equation}
of each minor product in (\ref{eq: Cauchy-Binet expansion}) through a monomial in $d$ indeterminates, where we have adopted the notation 
\begin{equation}
    \mathbf{t}^{\mathbf{e}(\mathcal{I})}:=\prod_{u=1}^{d} t_{u}^{e_{u}(\mathcal{I})}.
    \label{eq: monomial notation}
\end{equation} 
Note that the deformations (\ref{eq: sign of minor}) studied in \cite{Angelelli2019} can be seen as a reduction of (\ref{eq: toric deformations}), choosing $d=1$ and specifying $t_{1}=-1$ and $c_{\mathcal{I}}=1$ for all the sets $\mathcal{I}\in\wp_{k}[n]$ that contribute to (\ref{eq: toric deformations}). Moreover, (\ref{eq: toric deformations}) generalises the functional form of the phases in (\ref{eq: Cauchy-Binet expansion of Wronskian}) for rational soliton parameters $\kappa_{1},\dots,\kappa_{n}$, as follows from the change of variables $\mathrm{e}^{x_{\alpha}/M}=:t_{\alpha}$ for all $\alpha\in\{1,\dots,n\}$ and an appropriate choice of $M\in\mathbb{N}$. 

We recall that, given a matrix $\mathbf{A}\in\mathbb{C}^{k\times n}$, the set 
 \begin{equation}
\mathfrak{G}(\mathbf{A}):=\left\{ \mathcal{I}\in\wp_{k}[n]:\,\Delta_{\mathbf{A}}(\mathcal{I})\neq0\right\}.
\label{eq: set of non-vanishing minors}
 \end{equation}
is a matroid, i.e. a non-empty set characterised by the following exchange relation \citep{Oxley2006} 
 \begin{equation}
\forall\,\mathcal{A},\mathcal{B}\in\mathfrak{G}(\mathbf{A}),\,\alpha\in\mathcal{A}\setminus\mathcal{B}:\,\exists\,\beta\in\mathcal{B}\setminus\mathcal{A}.\mathcal{A}_{\beta}^{\alpha}\in\mathfrak{G}(\mathbf{A}).
\label{eq: exchange relation}
 \end{equation}
Then, a deformation (\ref{eq: toric deformations}) can be described by a mapping that associates each subset $\mathcal{I}\in\mathfrak{G}(\mathbf{A})$ with an element $\Psi(\mathcal{I}):=(e_{1}(\mathcal{I}),\dots,e_{d}(\mathcal{I}))\in\mathbb{Z}^{d}$. From this, we can explore 
the existence of a ``potential''  $\psi:\,\{1,\dots,n\}\longrightarrow\mathbb{Z}^{d}$ such that $\Psi(\mathcal{I})$ can be uniquely recovered from values $\psi(\alpha)$ with $\alpha\in\mathcal{I}$. 

To relate $\Psi$ and $\psi$, we use the $\mathbb{Z}$-module structure of $\mathbb{Z}^{d}$ looking for additive and affine set functions. Additivity generalises (\ref{eq: Cauchy-Binet expansion of Wronskian}) and, in turn, extends to affine set functions since the set of configurations $\mathbf{e}$ in (\ref{eq: toric deformations}) that return the terms of a determinantal expansion is closed under translations $\mathbf{e}\mapsto \mathbf{e}+\mathbf{m}_{0}$ for any constant $\mathbf{m}_{0}\in\mathbb{Z}^{d}$. More importantly, affine set functions let us introduce consistency (another notion often related to integrability, see e.g. \citep{Bobenko2004}) in our combinatorial context: for all $\mathcal{H}\subseteq\{1,\dots,n\}$ with $\#\mathcal{H}=k-2$ and $\alpha_{1},\alpha_{2},\beta_{1},\beta_{2}\in\{1,\dots,n\}\setminus\mathcal{H}$, the object of interest is the quantity 
 \begin{equation}
\Psi\left(\mathcal{H}\cup\{\alpha_{1},\alpha_{2}\}\right)+\Psi\left(\mathcal{H}\cup\{\beta_{1},\beta_{2}\}\right)-\Psi\left(\mathcal{H}\cup\{\alpha_{1},\beta_{2}\}\right)-\Psi\left(\mathcal{H}\cup\{\beta_{1},\alpha_{2}\}\right)
\label{eq: consistency and curvature}
 \end{equation}
whenever the arguments lie in the domain of $\Psi$, that is, $\mathfrak{G}(\mathbf{A})$. Setting (\ref{eq: consistency and curvature}) equal to zero entails the consistency of the differences $\Psi(\mathcal{I}\setminus\{\alpha_{1}\}\cup\{\beta_{1}\})-\Psi(\mathcal{I})$ for two different choices of the basis, indexed by $\mathcal{I}:=\mathcal{H}\cup\{\alpha_{1},\alpha_{2}\}$ or $\mathcal{I}:=\mathcal{H}\cup\{\alpha_{1},\beta_{2}\}$, respectively. When, in addition, we have $\mathcal{H}\cup\{\alpha_{u},\beta_{u}\}\in\mathfrak{G}(\mathbf{A})$ for both $u\in\{1,2\}$, we also get consistency between two different sequences $(\alpha_{1},\alpha_{2})\rightarrow(\alpha_{1},\beta_{u})\rightarrow(\beta_{1},\beta_{2})$, defined by $u\in\{1,2\}$, of single-index exchanges connecting the bases labelled by $\mathcal{H}\cup\{\alpha_{1},\alpha_{2}\}$ and $\mathcal{H}\cup\{\beta_{1},\beta_{2}\}$, respectively.

\subsection{Overview of main results and implications for permutation testing}
\label{subsec: overview and main result}

The first contribution of this work is to characterise the set functions that are compatible with the determinantal expansion.  
\begin{thm}
\label{thm: rigidity theorem monomial}
Let $\mathbf{L}(\mathbf{t}),\mathbf{R}(\mathbf{t})^{\mathtt{T}}$ be two $(k\times n)$-dimensional matrices depending on $d$ indeterminates $\mathbf{t}$ such that $\max\{n-k,k\}\geq 5$, $\mathbf{R}(\mathbf{t})$ is generic (i.e. no maximal minor vanishes for a generic choice of $\mathbf{t}$), and $\mathbf{L}(\mathbf{1})\in\mathbb{C}^{k\times n}$ has two generic columns, that is, there exist $\mathcal{I}\in\wp_{k}[n]$ and $\alpha_{1},\alpha_{2}\in\{1,\dots,n\}\setminus\mathcal{I}$ such that
\begin{equation}
    \mathcal{J}\setminus\mathcal{I}\subseteq \{\alpha_{1},\alpha_{2}\} \Rightarrow \Delta_{\mathbf{L}(\mathbf{1})}(\mathcal{J})\neq0,\quad \mathcal{J}\in\wp_{k}[n].
\label{eq: two generic columns condition}
\end{equation}
If the terms in the Cauchy-Binet expansion of $\det(\mathbf{L}(\mathbf{t})\cdot\mathbf{R}(\mathbf{t}))$ satisfy the monomial condition 
 \begin{equation}
\Delta_{\mathbf{L}(\mathbf{t})}(\mathcal{I})\cdot\Delta_{\mathbf{R}(\mathbf{t})}(\mathcal{I})=g_{\mathcal{I}}\cdot \mathbf{t}^{\Psi(\mathcal{I})},\quad\mathcal{I}\in\mathfrak{G}(\mathbf{L}(\mathbf{1})),\,g_{\mathcal{I}}\in\mathbb{C},\,\Psi(\mathcal{I})\in\mathbb{Z}^{d}
\label{eq: monomial terms of Cauchy-Binet expansion}
 \end{equation}
then $\Psi$ is an affine set function, that is, there exist an element $\mathbf{m_{0}}\in\mathbb{Z}^{d}$ and a map $\psi:\,\{1,\dots,n\}\longrightarrow\mathbb{Z}^{d}$ such that
 \begin{equation}
\Delta_{\mathbf{L}(\mathbf{t})}(\mathcal{I})\cdot\Delta_{\mathbf{R}(\mathbf{t})}(\mathcal{I})=\mathbf{t}^{\mathbf{m}_{0}}\cdot\Delta_{\mathbf{L}(\mathbf{1})}(\mathcal{I})\cdot\Delta_{\mathbf{R}(\mathbf{1})}(\mathcal{I})\prod_{\alpha\in\mathcal{I}}\mathbf{t}^{\psi(\alpha)},\quad\mathcal{I}\in\wp_{k}[n].
\label{eq: set-to-element integrability}
 \end{equation} 
\end{thm}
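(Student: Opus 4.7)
The conclusion (\ref{eq: set-to-element integrability}) is equivalent to the statement that $\Psi$ is an affine set function on $\mathfrak{G}(\mathbf{L}(\mathbf{1}))$; its extension to $\wp_{k}[n]\setminus\mathfrak{G}(\mathbf{L}(\mathbf{1}))$ is vacuous because both sides of (\ref{eq: set-to-element integrability}) vanish there. My plan is therefore to reduce the theorem to showing that the discrete curvature (\ref{eq: consistency and curvature}) vanishes on every 4-cycle $(\mathcal{H},\alpha_{1},\alpha_{2},\beta_{1},\beta_{2})$ whose four corner-sets lie in $\mathfrak{G}(\mathbf{L}(\mathbf{1}))$. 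Once this is established, a standard cocycle-to-coboundary argument along the basis exchange graph of $\mathfrak{G}(\mathbf{L}(\mathbf{1}))$, connected by the exchange axiom (\ref{eq: exchange relation}), produces the required $\psi$ and $\mathbf{m}_{0}$.

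For a single 4-cycle, I would combine the 3-term Grassmann-Pl\"ucker relations for $\mathbf{L}(\mathbf{t})$ and for $\mathbf{R}(\mathbf{t})$ on the quadruple $\{\alpha_{1},\alpha_{2},\beta_{1},\beta_{2}\}$ over base $\mathcal{H}$ with the monomial hypothesis (\ref{eq: monomial terms of Cauchy-Binet expansion}). Since $\mathbf{R}$ is generic, each $\Delta_{\mathbf{R}(\mathbf{t})}(\mathcal{J})$ is invertible in $\mathbb{C}(\mathbf{t})$, so on the 4-cycle $\Delta_{\mathbf{L}(\mathbf{t})}(\mathcal{J})=g_{\mathcal{J}}\,\mathbf{t}^{\Psi(\mathcal{J})}/\Delta_{\mathbf{R}(\mathbf{t})}(\mathcal{J})$. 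Substituting this into the $\mathbf{L}(\mathbf{t})$-Pl\"ucker identity, clearing denominators, and using the $\mathbf{R}(\mathbf{t})$-Pl\"ucker identity to eliminate one of the three products of $\mathbf{R}$-minors yields a quadratic functional identity
\begin{equation*}
g'\,\mathbf{t}^{\Phi_{1}}\,u^{2} \;+\; g''\,\mathbf{t}^{\Phi_{2}}\,w^{2} \;+\; C(\mathbf{t})\, u\, w \;=\; 0
\end{equation*}
in the $\mathbf{R}$-products $u=\Delta_{\mathbf{R}(\mathbf{t})}(\mathcal{H}\cup\{\alpha_{1},\alpha_{2}\})\,\Delta_{\mathbf{R}(\mathbf{t})}(\mathcal{H}\cup\{\beta_{1},\beta_{2}\})$ and $w=\Delta_{\mathbf{R}(\mathbf{t})}(\mathcal{H}\cup\{\alpha_{1},\beta_{2}\})\,\Delta_{\mathbf{R}(\mathbf{t})}(\mathcal{H}\cup\{\alpha_{2},\beta_{1}\})$; here $g',g''\in\mathbb{C}\setminus\{0\}$ are products of $g_{\mathcal{J}}$'s, $C(\mathbf{t})$ is a sum of three monomials dictated by $\Psi$, and $\Phi_{1}=\Psi(\mathcal{H}\cup\{\alpha_{1},\beta_{2}\})+\Psi(\mathcal{H}\cup\{\alpha_{2},\beta_{1}\})$, $\Phi_{2}=\Psi(\mathcal{H}\cup\{\alpha_{1},\alpha_{2}\})+\Psi(\mathcal{H}\cup\{\beta_{1},\beta_{2}\})$. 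Matching Laurent-monomial supports of the coefficients of $u^{2}$ and $w^{2}$ then forces $\Phi_{1}=\Phi_{2}$, which is exactly the vanishing of (\ref{eq: consistency and curvature}).

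To promote this local identity to a global statement, I would exploit the ``two generic columns'' hypothesis (\ref{eq: two generic columns condition}), which exhibits a base 4-cycle whose six relevant pair-sums all lie in $\mathfrak{G}(\mathbf{L}(\mathbf{1}))$, so that no Pl\"ucker term degenerates at the base case. The bound $\max\{n-k,k\}\geq 5$ then supplies an auxiliary fifth element $\gamma$ outside the quadruple (appealing to matroid duality when $k<5$), whose 3-term Pl\"ucker relations link adjacent 4-cycles sharing an edge, and along such chains the curvature vanishing propagates throughout $\mathfrak{G}(\mathbf{L}(\mathbf{1}))$. Fixing a reference basis $\mathcal{I}_{0}\in\mathfrak{G}(\mathbf{L}(\mathbf{1}))$, I would then define $\psi(\alpha)$ inductively by the single-exchange increment $\psi(\alpha):=\Psi(\mathcal{I}_{0}\setminus\{\beta\}\cup\{\alpha\})-\Psi(\mathcal{I}_{0})+\psi(\beta)$, which is well-defined by the established consistency, and set $\mathbf{m}_{0}:=\Psi(\mathcal{I}_{0})-\sum_{\alpha\in\mathcal{I}_{0}}\psi(\alpha)$, yielding (\ref{eq: set-to-element integrability}).

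The main obstacle is the support-matching step. A single quadratic identity $Au^{2}+Bw^{2}+Cuw=0$ over $\mathbb{C}(\mathbf{t})$ does not, in isolation, force the $\mathbf{t}$-degrees of $A$ and $B$ to coincide: it only constrains the ratio $u/w$ to satisfy a specific quadratic over the Laurent ring, admitting in principle a ``wrong'' second root. Rigorously extracting the exponent equality requires exploiting the genericity of $\mathbf{R}(\mathbf{t})$ to ensure that the Newton polytopes of $u,w$ and their analogues across varying quadruples are sufficiently transverse to rule out accidental cancellations, combined with the auxiliary Pl\"ucker constraints supplied by the fifth column. This is the technical heart of the argument and is precisely where the numerical bound ``$\geq 5$'' enters.
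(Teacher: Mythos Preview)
Your skeleton is sound and matches the paper's starting point: the coupled Pl\"ucker relations do produce exactly the quadratic $F_{\alpha\beta}^{ij}(X)=h(\mathcal{I}_{\beta}^{i})h(\mathcal{I}_{\alpha}^{j})X^{2}-A_{\alpha\beta}^{ij}X+h(\mathcal{I}_{\alpha}^{i})h(\mathcal{I}_{\beta}^{j})$ satisfied by the cross-ratio $Y_{\alpha\beta}^{ij}=u/w$, and the curvature vanishes precisely when this cross-ratio is a constant in $\mathbb{C}$. You are also right that the entire difficulty is the ``wrong second root'': a priori $Y_{\alpha\beta}^{ij}$ could live in a quadratic extension $\mathbb{F}(\sqrt{B_{\alpha\beta}^{ij}})$ of the fraction field, and then no support-matching is available.

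The gap is that your proposed resolution of this obstacle is not a proof. ``Newton polytopes of $u,w$ are sufficiently transverse'' is a hope, not a mechanism, and the paper's Example \ref{exa: counterexample limited information, dimensionality} shows that at $n-k=4$ radical $Y$-terms \emph{do} occur with a fully generic $\mathbf{R}$, so genericity of $\mathbf{R}$ alone cannot rule them out; the bound $\geq 5$ is sharp and must be used in a combinatorially specific way. The paper's actual argument is very different from transversality: it first shows (Lemma \ref{lem: common squarefree}, Proposition \ref{prop: same radical for all starting sets}) that \emph{all} radical $Y$-terms, over \emph{all} bases, lie in a single quadratic extension $\mathbb{F}(\sqrt{D})$ with a common squarefree part $D$; it then classifies the finitely many three-term configurations $\hat{\chi}(\mathcal{I}\mid_{\alpha\beta}^{ij})$ compatible with a given $D$ (Proposition \ref{prop: factor for square and GCD}, Theorem \ref{thm: distinguishability terms from Y}, the class-$\mathrm{I}$/class-$\mathrm{II}$ and $G$/$S$-type dichotomy); finally, with $n-k\geq 5$ it runs a pigeonhole over the indices $\gamma\in\mathcal{I}^{\mathtt{C}}$ against this finite list, forcing two distinct $\gamma_{1},\gamma_{2}$ to yield the same $Y$-term and hence $Y_{\gamma_{1}\gamma_{2}}^{ij}=-1$, contradicting the Pl\"ucker relations for the generic $\mathbf{R}$ (Propositions \ref{prop: existence proportional but non-equal terms} and \ref{prop: bound number of radicals}). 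Only after all $Y$-terms are shown to be constants does the gauge (\ref{eq: action of special gauge on R}) reduce $\mathbf{R}(\mathbf{t})$ to a constant matrix and the affine structure of $\Psi$ follow as you describe.

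In short: your reduction to ``rule out the second root'' is correct, your cocycle-to-coboundary endgame is correct, but the middle---which is the theorem---is missing. What you would need to supply is either the paper's global rigidity argument (common $D$, finite configuration list, pigeonhole at $\geq 5$) or a genuinely different mechanism that explains why the bound is exactly $5$.
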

Equivalently, the genericity condition on $\mathfrak{G}(\mathbf{L}(\mathbf{1}))$ in (\ref{eq: two generic columns condition}) guarantees that the compatibility with determinantal expansions for a generic $\mathbf{R}(\mathbf{t})$ is achieved if and only if we can reduce to the expansion of  
\begin{equation} 
\det\left(\mathbf{L}(\mathbf{1})\cdot \mathrm{diag}(\mathbf{t}^{\psi(\alpha)})_{\alpha\in\{1,\dots,n\}} \cdot \mathbf{R}(\mathbf{1})\right) 
\end{equation} 
apart from a common unit $\mathbf{t}^{\mathbf{m}_{0}}$ that is irrelevant in terms of Pl\"{u}cker coordinates. This factor can be absorbed into the determinant by translation $\psi\mapsto \psi +\mathbf{m}_{0}/n$ and change of variables $\mathbf{t}\mapsto \mathbf{s}^{n}$ to deal with units of $\mathbb{C}(\mathbf{t},\mathbf{s})/I$, where $I$ is the ideal generated by binomials $t_{u}-s_{u}^{n}$, $u\in\{1,\dots,d\}$. 

Theorem \ref{thm: rigidity theorem monomial} may fail when there are not enough non-vanishing minors of $\mathbf{L}(\mathbf{t})$: under the genericity conditions for $\mathbf{R}$ and for two columns of $\mathbf{L}$, the reduced number of minors may only come from the low dimensionality of the two matrices. When the genericity condition for $\mathbf{L}(\mathbf{1})$ is violated, we can find a counterexample to Theorem \ref{thm: rigidity theorem monomial}. 
\begin{example} 
\label{exa: principal minor counterexample}
Let us take $k=6$, $n=12$ (so $n-k\geq\max\{5,k\}$), and matrices 
\begin{equation}
    \mathbf{L}_{\text{c}}:= \left(
    \begin{smallmatrix}
 1 & 0 & 0 & 0 & 0 & 0 & 1 & 0 & 0 & 0 & 0 & 0 \\
 0 & 1 & 0 & 0 & 0 & 0 & 0 & 1 & 0 & 0 & 0 & 0 \\
 0 & 0 & 1 & 0 & 0 & 0 & 0 & 0 & 1 & 0 & 0 & 0 \\
 0 & 0 & 0 & 1 & 0 & 0 & 0 & 0 & 0 & 1 & 0 & 0 \\
 0 & 0 & 0 & 0 & 1 & 0 & 0 & 0 & 0 & 0 & 1 & 0 \\
 0 & 0 & 0 & 0 & 0 & 1 & 0 & 0 & 0 & 0 & 0 & 1 \\
    \end{smallmatrix}
    \right),
    \quad
    \mathbf{R}_{\text{c}}:= \left(
    \begin{smallmatrix}
 1 & 0 & 0 & 0 & 0 & 0 & t & t+1 & t+1 & t+1 & t+1 & t+1 \\
 0 & 1 & 0 & 0 & 0 & 0 & t-1 & t & t-2 & t+1 & t-6 & t+2 \\
 0 & 0 & 1 & 0 & 0 & 0 & t-1 & t+2 & t & t+5 & t+3 & t+10 \\
 0 & 0 & 0 & 1 & 0 & 0 & t-1 & t-1 & t-5 & t & t-14 & t-7 \\
 0 & 0 & 0 & 0 & 1 & 0 & t-1 & t+6 & t-3 & t+14 & t & t+13 \\
 0 & 0 & 0 & 0 & 0 & 1 & t-1 & t-2 & t-10 & t+7 & t-13 & t \\
    \end{smallmatrix}
    \right)
\end{equation}
It is easily checked that $\Delta_{\mathbf{L}_{\text{c}}}(\mathcal{I})\neq0$ if and only if $\mathcal{I}\setminus\{1,2,3,4,5,6\}$ is obtained from $\{1,2,3,4,5,6\}\setminus\mathcal{I}$ by adding $6$ to all its elements. It follows that condition (\ref{eq: two generic columns condition}) is not verified. On the other hand, we can also check that all non-vanishing minors $\Delta_{\mathbf{L}_{\text{c}}}(\mathcal{I})\cdot \Delta_{\mathbf{R}_{\text{c}}}(\mathcal{I})\neq0$ are monomials in $t$ of degree $0$ or $1$. Evaluating $\mathbf{R}_{\text{c}}$ at $t=9$, all the maximal minors of $\mathbf{R}_{\text{c}}(9)$ are non-vanishing, where $\min_{\mathcal{I}\in\wp_{6}[12]}\mid\Delta_{\mathbf{R}_{\text{c}}(9)}(\mathcal{I})\mid = 1$ is achieved, for example, at $\mathcal{I}=\{1,2,3,4,5,6\}$; by the continuity of the determinant as a function of its arguments, there is a neighbourhood of $t=9$ where $\mathbf{R}_{\text{c}}(t)$ remains generic, so the genericity condition for $\mathbf{R}_{\text{c}}(t)$ is also verified. We see that 
\begin{equation}
    \Psi\left(\{1,2,3,4,5,6\}\right) + \Psi\left(\{3,4,5,6,7,8\}\right)
    - \Psi\left(\{1,3,4,5,6,8\}\right) - \Psi\left(\{2,3,4,5,6,7\}\right) = -2 
\end{equation}
in contradiction to (\ref{eq: consistency and curvature}), which is a necessary condition for (\ref{eq: set-to-element integrability}) to hold. Then, the combinatorial reduction does not take place in this case. 
\end{example}
More general independence structures associated with $\mathbf{L}(\mathbf{1})$ require an additional study of minimal conditions that guarantee the combinatorial reduction (\ref{eq: set-to-element integrability}), and we dedicate a separate work to this investigation.

The algebraic structure resulting from this model combines two quadratic contributions, namely the Grassmann-Pl\"{u}cker relations for two matrices in (\ref{eq: Cauchy-Binet expansion}). This results in a set of quartic equations that constrain the form of the minors, and hence the deformations preserving the determinantal form. Our second contribution delves into the relations between the set of allowed deformations and the factorisation properties of polynomials derived from this combination of Grassmann-Pl\"{u}cker relations, in particular their squarefree decomposition. In turn, we will find that this squarefree decomposition leads to a characterisation of Shannon's entropy for a Bernoulli random variable (see Remark \ref{rem: characterization of the binary entropy function}). These observations highlight an information-theoretic interpretation of the model: $\mathbf{L}(\mathbf{t})$ is a source of structural information represented by the matroid $\mathfrak{G}(\mathbf{L}(\mathbf{1}))$, and a generic matrix $\mathbf{R}(\mathbf{t})$ is selected to explore this structure using accessible information (\ref{eq: monomial terms of Cauchy-Binet expansion}).   

The assumptions of Theorem \ref{thm: rigidity theorem monomial} specify neither the form of $g_{\mathcal{I}}$ nor the exponents $\Psi(\mathcal{I})$, which makes the Cauchy-Binet expansion a \emph{sparse polynomial} \citep{Khovanskii1991} with a given upper bound for its sparsity. This argument does not rely upon the association $\mathcal{I}\mapsto(g_{\mathcal{I}},\Psi(\mathcal{I}))$ or potential reductions arising from the cancellation of terms in the expansion (\ref{eq: Cauchy-Binet expansion}): if we are able to find an assignment of non-vanishing terms $g_{\mathcal{I}}$ that is consistent with (\ref{eq: Cauchy-Binet expansion}) and 
the assumptions of Theorem \ref{thm: rigidity theorem monomial}, then the reduction (\ref{eq: set-to-element integrability}) follows. 

The third contribution of this work exploits Theorem \ref{thm: rigidity theorem monomial} and the previous observation, providing a verification protocol to check the existence and, if so, recover a given permutation of $\{1,\dots,n\}$ when available information is only provided by the unlabelled collection of terms (\ref{eq: monomial terms of Cauchy-Binet expansion}). 
When the set function $\Psi$ corresponds to a permutation $\widehat{\Psi}:\,\mathfrak{G}(\mathbf{L}(\mathbf{1}))\longrightarrow \mathfrak{G}(\mathbf{L}(\mathbf{1}))$, Theorem \ref{thm: rigidity theorem monomial} can be used to check if it is induced by a permutation $\widehat{\psi}:\,\{1,\dots,n\}\longrightarrow\{1,\dots,n\}$ even when we do not have explicit information about $\Psi$, but only through the terms of a candidate determinantal expansion. 
\begin{thm}
\label{thm: check and recover permutation reductions} 
Let $k,n\in\mathbb{N}$ with $k\leq n$ and $\max\{n-k,k\}\geq 5$, and $\widehat{\Psi}:\,\mathfrak{G}\longrightarrow\mathfrak{G}$ be a permutation of a matroid $\mathfrak{G}$ on $\{1,\dots,n\}$ with rank $k$. 
Assume that each $n$-tuple $\mathbf{t}\in\mathbb{C}^{n}$ returns a list, i.e. a tuple standing for an unlabelled multiset of $\#\mathfrak{G}$ complex numbers, which represent the candidate components of a determinantal expansion. Then, we can choose two $n$-tuples $\mathbf{t}_{u}$, $u\in\{1,2\}$, to verify the fulfilment of the assumptions of Theorem \ref{thm: rigidity theorem monomial}, the existence of a permutation $\psi$ of $\{1,\dots,n\}$ that induces $\Psi$ and, in the affirmative case, recover $\psi$ and matrices $\mathbf{a}$ and $\mathbf{q}$ that generate the terms of the expansion. 
\end{thm}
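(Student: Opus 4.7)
The plan is to encode the permutation $\widehat{\Psi}$ of $\mathfrak{G}$ as a $\mathbb{Z}^{n}$-valued set function by $\Psi(\mathcal{I}):=\chi_{\widehat{\Psi}(\mathcal{I})}$, the indicator vector of $\widehat{\Psi}(\mathcal{I})\subseteq\{1,\dots,n\}$ (so $d=n$), and to observe that under this encoding $\widehat{\Psi}$ is induced by a permutation $\widehat{\psi}$ of $\{1,\dots,n\}$ precisely when the conclusion (\ref{eq: set-to-element integrability}) holds with $\mathbf{m}_{0}=\mathbf{0}$ and $\psi(\alpha)$ equal to the $\widehat{\psi}(\alpha)$-th standard basis vector of $\mathbb{Z}^{n}$. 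Two queries then suffice: $\mathbf{t}_{1}:=\mathbf{1}$ yields the ``constant part'' $V_{1}$, which should equal $\{g_{\mathcal{I}}\}_{\mathcal{I}\in\mathfrak{G}}$; a probe $\mathbf{t}_{2}\in\mathbb{C}^{n}$ with coordinates chosen algebraically independent over $\mathbb{Q}$ (or otherwise Zariski-generic) makes the monomials $\mathbf{t}_{2}^{\chi_{\mathcal{J}}}$, $\mathcal{J}\in\wp_{k}[n]$, pairwise distinct and yields $V_{2}$, which should equal $\{g_{\mathcal{I}}\cdot\mathbf{t}_{2}^{\chi_{\widehat{\Psi}(\mathcal{I})}}\}_{\mathcal{I}\in\mathfrak{G}}$ as a multiset.

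I would then verify the oracle's data by a pairing argument. For each $v\in V_{2}$, attempt to factor $v=g\cdot\mathbf{t}_{2}^{\chi_{\mathcal{J}}}$ with $g\in V_{1}$ and $\mathcal{J}\in\mathfrak{G}$: the genericity of $\mathbf{t}_{2}$ forces such a factorisation to be unique whenever it exists, because any relation $g/g'=\mathbf{t}_{2}^{\chi_{\mathcal{J}'}-\chi_{\mathcal{J}}}$ with $g,g'\in\mathbb{C}^{\times}$ and $\mathcal{J}\neq\mathcal{J}'$ would violate the chosen independence. The resulting matching lifts to a bijection $\phi:V_{1}\rightarrow\mathfrak{G}$ compatible with the given $\widehat{\Psi}$ if and only if the oracle data is consistent with a monomial Cauchy-Binet expansion and the prescribed permutation; failure of uniqueness or of compatibility is a rejection criterion. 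On success, $\phi$ canonically labels $V_{1}$ by $\mathfrak{G}$ and certifies the monomial hypothesis (\ref{eq: monomial terms of Cauchy-Binet expansion}) at the two tested points, while the remaining hypotheses of Theorem \ref{thm: rigidity theorem monomial} ($\max\{n-k,k\}\geq 5$, generic $\mathbf{R}$, two generic columns of $\mathbf{L}(\mathbf{1})$) are verifiable directly from $\mathfrak{G}$ together with the reference matrices built below.

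The induced-permutation test is then a purely matroid-theoretic step: fix any $\mathcal{I}_{0}\in\mathfrak{G}$, and for each $\alpha\in\mathcal{I}_{0}$ invoke the exchange axiom (\ref{eq: exchange relation}) to produce $\beta\notin\mathcal{I}_{0}$ with $\mathcal{I}_{0}\setminus\{\alpha\}\cup\{\beta\}\in\mathfrak{G}$, and define $\widehat{\psi}(\alpha)$ as the unique element of $\widehat{\Psi}(\mathcal{I}_{0})\setminus\widehat{\Psi}(\mathcal{I}_{0}\setminus\{\alpha\}\cup\{\beta\})$. The quadrilateral consistency ensured by Theorem \ref{thm: rigidity theorem monomial}, equivalently the vanishing of (\ref{eq: consistency and curvature}), makes this choice independent of $\beta$ and propagates consistently across bases, extending $\widehat{\psi}$ to all of $\{1,\dots,n\}$, which is then tested for bijectivity. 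To recover the matrices, take $\mathbf{a}$ to be a $\mathbb{C}$-representation of $\mathfrak{G}$ with the two-generic-columns property (an object whose existence is already required for Theorem \ref{thm: rigidity theorem monomial} to apply) and $\mathbf{q}$ a generic Vandermonde, and absorb $n$ column-scalings $\mathrm{diag}(\gamma_{\alpha})_{\alpha}$ so that $\prod_{\alpha\in\mathcal{I}}\gamma_{\alpha}=g_{\phi^{-1}(\mathcal{I})}/(\Delta_{\mathbf{a}}(\mathcal{I})\,\Delta_{\mathbf{q}}(\mathcal{I}))$ for every $\mathcal{I}\in\mathfrak{G}$; solvability of this toric system in the $\gamma$'s follows from the fact that the labelled $V_{1}$ arose, by construction of $\phi$, from a valid product of minors.

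The hard part will be closing the gap between the finite data extracted at two evaluations $\mathbf{t}_{1},\mathbf{t}_{2}$ and the global monomial statement (\ref{eq: monomial terms of Cauchy-Binet expansion}), which is a claim about $\mathbf{t}$ ranging over all of $\mathbb{C}^{n}$. Theorem \ref{thm: rigidity theorem monomial} is the essential lever: once the probe query exhibits the exponents $\chi_{\widehat{\Psi}(\mathcal{I})}$ at a single generic point, the rigidity of determinantal expansions under the genericity conditions promotes the pointwise monomial match to a global one, so the two evaluations simultaneously certify the hypotheses, confirm or refute the induced-permutation property, and reconstruct $\widehat{\psi}$ together with matrices $\mathbf{a},\mathbf{q}$ generating the full expansion.
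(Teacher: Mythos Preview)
Your two-query shape matches the paper's, but two of your recovery steps do not go through.

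First, the matching you build from $V_1$ and $V_2$ only yields the \emph{relabelled} coefficient function $h^*(\mathcal{J}):=g(\widehat{\Psi}^{-1}(\mathcal{J}))$ on the image side; it does not let you evaluate $\widehat{\Psi}$ on a prescribed domain basis. Your induced-permutation test asks for $\widehat{\Psi}(\mathcal{I}_0)\setminus\widehat{\Psi}(\mathcal{I}_0\setminus\{\alpha\}\cup\{\beta\})$, but from unlabelled lists you never learn which $\mathcal{I}$ produced a given $g\in V_1$, so $\widehat{\Psi}(\mathcal{I}_0)$ is simply not computable from your data. The paper sidesteps this by working only in the image labelling: magnitude thresholds (Step~3) produce a \emph{candidate} $\psi_{\texttt{Hp}}$ from differences $\mathcal{I}^{(u)}\setminus\mathcal{I}^{(u-1)}$ of explicitly identified sets, and the single scalar test $\Delta(\Psi;\mathbf{t}_0)=\Delta(\Psi_{\texttt{Hp}};\mathbf{t}_0)$ (Step~4), which the separation inequalities make injective on $\mathcal{S}_{\mathfrak{G}}$, decides whether $\Psi=\Psi_{\texttt{Hp}}$. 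A related minor point: taking $\mathbf{t}_2$ algebraically independent over $\mathbb{Q}$ does not preclude $g/g'=\mathbf{t}_2^{\chi_{\mathcal{J}'}-\chi_{\mathcal{J}}}$ when the $g$'s are transcendental; the choice of $\mathbf{t}_2$ must depend on $V_1$, which the paper enforces quantitatively via (\ref{eq: ordering of variables for labelling}).

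Second, the matrix-recovery step is wrong as stated. Fixing \emph{some} representation $\mathbf{a}$ of $\mathfrak{G}$ and \emph{some} generic $\mathbf{q}$, the ratio $h^*(\mathcal{I})/\bigl(\Delta_{\mathbf{a}}(\mathcal{I})\,\Delta_{\mathbf{q}}(\mathcal{I})\bigr)$ need not split as $\prod_{\alpha\in\mathcal{I}}\gamma_\alpha$ even when $h^*$ genuinely equals $\Delta_{\mathbf{a}'}\cdot\Delta_{\mathbf{q}'}$ for another pair: two points of the Grassmannian sharing a matroid are in general not related by the diagonal torus, so your toric system is typically inconsistent and would falsely reject. (Already at $k=2$, $n=4$ with $\mathbf{a}=\left(\begin{smallmatrix}1&0&1&1\\0&1&1&3\end{smallmatrix}\right)$, $\mathbf{a}'=\left(\begin{smallmatrix}1&0&1&1\\0&1&1&2\end{smallmatrix}\right)$ and common $\mathbf{q}$, the six ratios $\Delta_{\mathbf{a}'}/\Delta_{\mathbf{a}}$ admit no factorisation $\gamma_\alpha\gamma_\beta$.) The paper's Step~6 instead \emph{reconstructs} $\mathbf{q}$ from the data through Proposition~\ref{prop: sign ambiguity removal}: the labelled minor products $h^*$ determine all observable $Y$-terms up to one global binary choice, which fixes $\mathbf{q}$ in the gauge (\ref{eq: action of special gauge on R}); only then is $\Delta_{\mathbf{a}}:=h^*/\Delta_{\mathbf{q}}$ tested against the Pl\"ucker relations and Assumption~\ref{claim: generic matrices}.
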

Additional \textit{a priori} knowledge of the domain of the entries of $\mathbf{a}$, e.g. $\mathbb{Z}$ or an algebraic number field, can be used to extend the previous argument, 
enabling specific methods to perform the verification based on less information. This is shown in Corollary \ref{cor: combinatorial reduction from two scalars when integer}, where only two numbers suffice to verify complexity reduction and retrieve two candidate matrices that define the determinantal expansion, if they exist.

This application provides a complementary view to distance-based permutation testing (see e.g. \cite{Fox2018}) and relates to the geometric interpretation of maximal minors, which serve as homogeneous coordinates, in the projectivization $\mathbb{P}(\bigwedge^{k}\mathbb{C}^{n})$, for $k$-subspaces of $\mathbb{C}^{n}$ associated with $(k\times n)$-dimensional full-rank matrices. Indeed, the expansion (\ref{eq: Cauchy-Binet expansion}) arises in the study of principal angles between subspaces of a vector space \citep{Miao1992} and consequent applications \citep{Hamm2008,Dong2022}. Theorem \ref{thm: check and recover permutation reductions} allows us to deal with the lack of explicit information on coordinate labelling, which is a phenomenon that arises in unlabelled sensing or shuffled data \cite{Tsakiris2020,Peng2021}; furthermore, it extracts sufficient information to decompose candidate minor products, identifying in this way two subspaces involved in the determinantal expansion. 

The assumption of monomial deformation of terms in the Cauchy-Binet expansion is equivalent to the invertibility in the ring $\mathbb{C}(\mathbf{t})$ of the non-vanishing principal minors of the composed matrix 
\begin{equation*}
\begin{pmatrix}\mathbf{0}_{k} & \mathbf{L}(\mathbf{t})\\
\mathbf{R}(\mathbf{t}) & \mathbf{0}_{n}
\end{pmatrix}
\end{equation*}
indexed by subsets $\{1,\dots,k\}\cup\mathcal{I}$ with $\mathcal{I}\subseteq\{k+1,\dots,k+n\}$ and $\#\mathcal{I}=k$ (here $\mathbf{0}_{k}$ denotes the $(k\times k)$-dimensional null matrix). This connection with non-constant, invertible minor assignment in a given ring is the basis for further studies.

\subsection{Organisation of the paper}
\label{subsec: organization of the paper}

After Section \ref{sec: Preliminaries}, where we fix the notation that will be used in the rest of the paper, we derive the polynomial equations that are the basis of our study (Section \ref{sec: From Plucker to quartic relations}) and the constraints on algebraic extensions (Section \ref{sec: Algebraic constraints on algebraic extensions}) resulting from these equations. In Section \ref{sec: All three-terms non-vanishing}, we explore the constraints for non-vanishing minors in the three-term Grassmann-Pl\"{u}cker relations and describe allowed configurations (Subsections \ref{subsec: allowed configuration for radical terms}-\ref{subsec: Recovering local from global data}). From these results, in Section \ref{sec: proof of theorem} we prove Theorem \ref{thm: rigidity theorem monomial}, also investigating the occurrence of multiple distinct configurations to assess the minimal dimensions that guarantee com\-bi\-na\-to\-rial reduction. We provide an application to permutations on matroids in Section \ref{sec: Reduction of multi-valued maps and applications to permutation coding}.

\section{\label{sec: Preliminaries} Preliminaries}

\subsection{\label{subsec: Notation} Notation }

Let $\wp[n]$ be the power set of $[n]:=\{1,\dots,n\}$ and
$\wp_{k}[n]:=\left\{\mathcal{I}\subseteq[n]:\right.$ $\left.\#\mathcal{I}=k\right\}$ $\subseteq\wp[n]$. We adopt the shortening expressions 
 \begin{equation}
\mathcal{I}_{\beta}^{\alpha}:=\mathcal{I}\backslash\{\alpha\}\cup\{\beta\}\quad\mathcal{I}\in\wp[n],\,\alpha\in\mathcal{I},\,\beta\notin\mathcal{I}\label{eq: 1-exchange}
 \end{equation}
and similarly, $\mathcal{I}_{\beta}:=\mathcal{I}\cup\{\beta\}$, $\mathcal{I}^{\alpha}:=\mathcal{I}\backslash\{\alpha\}$, $\mathcal{I}^{\alpha_{1}\alpha_{2}}:=\mathcal{I}\backslash\{\alpha_{1},\alpha_{2}\}$, \textit{etc}. The notation $\mathcal{I}_{\beta}^{\alpha}$ implicitly assumes that $\alpha\in\mathcal{I}$ and $\beta\notin\mathcal{I}$, unless $\alpha=\beta$ where $\mathcal{I}_{\alpha}^{\alpha}:=\mathcal{I}$. We denote by $\mathcal{I}^{\mathtt{C}}:=[n]\setminus\mathcal{I}$ the complement of $\mathcal{I}$ in $[n]$. 
We adopt the notation $\mathbf{A}_{\mathcal{A};\mathcal{B}}$
to denote the submatrix of $\mathbf{A}$ with rows extracted from $\mathcal{A}\subseteq[k]$ and columns extracted from $\mathcal{B}\subseteq[n]$.

Rather than working with exponential sums as in (\ref{eq: Cauchy-Binet expansion of Wronskian}), we adopt a different parameterisation, moving to a polynomial ring with indeterminates $\mathbf{t}:=(t_{u}:\,u\in [d])$. 
For each index $w\in [d]$, the symbol $\mathbf{t}_{\widehat{w}}$ denotes the $(d-1)$-tuple obtained from $\mathbf{t}$ by omission of the $w$-th component. Given the ring $\mathbb{C}(\mathbf{t}):=\mathbb{C}[\mathbf{t},\mathbf{t}^{-1}]$ of the Laurent polynomials in $\mathbf{t}$, let $\mathbb{F}$ be the field of fractions of $\mathbb{C}(\mathbf{t})$. Given two polynomials $P,Q$, the symbol $P\mid Q$ means that $P$ is a factor of $Q$. 

For a non-vanishing polynomial $P\in\mathbb{C}(\mathbf{t})$, we use the standard inner product $\langle,\rangle$ between polynomials to formulate the support and the exponent set of $P$, i.e.
\begin{eqnarray}
P\in\mathbb{C}(\mathbf{t}) & \mapsto & \mathrm{Supp}(P):=\left\{ \mathbf{m}\in\mathbb{C}(\mathbf{t}):\,\mathbf{m}^{-1}\in\mathbb{C}(\mathbf{t}),\,\langle P,\mathbf{m}\rangle=\langle\mathbf{m},\mathbf{m}\rangle\right\} 
\label{eq: monomial terms in a polynomial},\\
P\in\mathbb{C}(\mathbf{t}) & \mapsto & \Psi(P):=\left\{ \mathbf{e}\in\mathbb{Z}^{d}:\,\exists c_{\mathbf{e}}\in\mathbb{C}\setminus\{0\},\,c_{\mathbf{e}}\cdot\mathbf{t}^{\mathbf{e}}\in\mathrm{Supp}(P)\right\}.
\label{eq: phases of monomials in a polynomial}
\end{eqnarray}
When $P:=c\cdot\mathbf{t}^{\mathbf{e}}$ is invertible in $\mathbb{C}(\mathbf{t})$ and no ambiguity arises, we use the symbol $\Psi(P)$ to refer to the vector of exponents $\mathbf{e}\in\mathbb{Z}^{d}$, in line with the function $\Psi$ in (\ref{eq: monomial terms of Cauchy-Binet expansion}). 
When $\Psi(P)\subseteq\{0,1\}^{d}$, it reduces to the characteristic function of a finite subset of $\wp[d]$.  

\begin{rem}
\label{rem: invertible change of indeterminates}
Any unimodular matrix $\mathbf{V}\in\mathbb{Z}^{d\times d}$ defines an invertible transformation $s_{u}:=\mathbf{t}^{\mathbf{V}_{u}}$, where $\mathbf{V}_{u}$ is the $u$-th column of $\mathbf{V}$ and $u\in[d]$, so $t_{u}=\mathbf{s}^{\mathbf{V}_{u}^{-1}}$. Each $P\in\mathbb{C}(\mathbf{t})$ corresponds to a polynomial $P_{s}:=P(\mathbf{s}^{\mathbf{V}^{-1}})\in\mathbb{C}(\mathbf{s})$, which induces a bijection between $\mathrm{Supp}(P)$ and $\mathrm{Supp}(P_{s})$: distinct monomials $a_{1}\mathbf{m}_{1}$ and $a_{2}\mathbf{m}_{2}$ in $\mathrm{Supp}(P)$ are mapped into distinct monomials $a_{1}\mathbf{s}^{\mathbf{V}^{-1}\cdot \Psi(\mathbf{m}_{1})}$ and $a_{2}\mathbf{s}^{\mathbf{V}^{-1}\cdot \Psi(\mathbf{m}_{2})}$ in $\mathrm{Supp}(P_{s})$ due to $\ker\mathbf{V}^{-1}=\{0\}$, and vice versa. This bijection extends to a ring isomorphism between $\mathbb{C}(\mathbf{t})$ and $\mathbb{C}(\mathbf{s})$. 
\end{rem}

We will make use of monomial orders: let us recall that a monomial order $\preceq$ is a total order on the class of monic monomials that is compatible with multiplication:  
 \begin{equation}
\forall\mathbf{x},\mathbf{y},\mathbf{z}\text{ monic monomials}:\,\mathbf{x}\preceq\mathbf{y}\Rightarrow\mathbf{x}\cdot\mathbf{z}\preceq\mathbf{y}\cdot\mathbf{z}.
\label{eq: monomial order compatibility with multiplication}
 \end{equation}
We express the left-hand side of (\ref{eq: monomial terms of Cauchy-Binet expansion}) as  
\begin{equation}
h(\mathcal{I}) :=  \Delta_{\mathbf{L}(\mathbf{t})}(\mathcal{I})\cdot\Delta_{\mathbf{R}(\mathbf{t})}(\mathcal{I}),\quad\mathcal{I}\in\wp_{k}[n].
\label{eq: notation exponential terms}
\end{equation}
\begin{defn} 
\label{def: observable sets} 
The multiset 
 \begin{equation}
\chi(\mathcal{I}\mid_{\alpha\beta}^{ij}):=\left\{ h(\mathcal{I})\cdot h(\mathcal{I}_{\alpha\beta}^{ij}),h(\mathcal{I}_{\alpha}^{i})\cdot h(\mathcal{I}_{\beta}^{j}),h(\mathcal{I}_{\beta}^{i})\cdot h(\mathcal{I}_{\alpha}^{j})\right\} \label{eq: three-terms set}
 \end{equation}
is called \emph{observable} when $\chi(\mathcal{I}\mid_{\alpha\beta}^{ij})\neq\{0\}$ and \emph{integrable} when 
 \begin{equation}
\#\Psi\left(\chi(\mathcal{I}\mid_{\alpha\beta}^{ij})\setminus\{0\}\right)=1,\label{eq: integrable three element set}
 \end{equation}
which implies that $\chi(\mathcal{I}\mid_{\alpha\beta}^{ij})$ is also observable. We say that $\mathcal{I}\in\mathfrak{G}(\mathbf{L}(\mathbf{1}))$ is an \emph{integrable basis} when (\ref{eq: integrable three element set}) holds for all observable sets $\chi(\mathcal{I}\mid_{\alpha\beta}^{ij})$ with basis $\mathcal{I}$. We call $h(\mathcal{I})\cdot h(\mathcal{I}_{\alpha\beta}^{ij})$ the \emph{central term} of $\chi(\mathcal{I}\mid_{\alpha\beta}^{ij})$, while we refer to a term in $\chi(\mathcal{I}\mid_{\alpha\beta}^{ij})$
that is algebraically independent of the others as \emph{unique term}. We will refer to (\ref{eq: integrable three element set}) as a set to ease the notation, bearing in mind that repeated elements are allowed.
\end{defn} 
Subsets $\mathcal{I}$ are considered unordered: signed minors are introduced as 
 \begin{equation}
\Delta_{\mathbf{A}}(\alpha,\beta\mid\mathcal{H}):=\Delta_{\mathbf{A}}(\mathcal{H}_{\alpha\beta})\cdot(-1)^{1+S(\alpha,\mathcal{H})+S(\beta,\mathcal{H})}\cdot\mathrm{sign}\left(\alpha-\beta\right),\quad\mathcal{H}\in\wp_{k-2}[n]\label{sign 2-2, b}
 \end{equation}
where 
 \begin{equation}
S(\alpha,\mathcal{H}):=\#\{\beta\in\mathcal{H}:\,\beta<\alpha\},\quad\alpha\in\mathcal{H}^{\mathtt{C}}
\label{eq: four splitting indices}
 \end{equation}
takes into account permutations of $[n]$ and makes us express the three-terms Pl\"{u}cker relations \citep{GKZ1994} as  
 \begin{eqnarray}
\Delta_{\mathbf{A}}(\delta_{1},\delta_{2}\mid\mathcal{H})\cdot\Delta_{\mathbf{A}}(\delta_{3},\delta_{4}\mid\mathcal{H}) & = & \Delta_{\mathbf{A}}(\delta_{1},\delta_{3}\mid\mathcal{H})\cdot\Delta_{\mathbf{A}}(\delta_{2},\delta_{4}\mid\mathcal{H})\nonumber \\ 
& & -\Delta_{\mathbf{A}}(\delta_{1},\delta_{4}\mid\mathcal{H})\cdot\Delta_{\mathbf{A}}(\delta_{2},\delta_{3}\mid\mathcal{H})
\label{eq: three-terms Plucker relations, bis}
 \end{eqnarray}
for any $\mathcal{H}\in\wp_{k-2}[n]$ and pairwise distinct elements $\delta_{a}\in\mathcal{H}^{\mathtt{C}}$, $a\in[4]$.

\section{\label{sec: From Plucker to quartic relations} Coupled Pl\"{u}cker relations}
Let us consider the Grassmann-Pl\"{u}cker relations in the form (\ref{eq: three-terms Plucker relations, bis}) for $\mathbf{L}(\mathbf{t})$, that is, 
 \begin{equation}
\Delta_{\mathbf{L}(\mathbf{t})}(\mathcal{H}_{ij})\cdot\Delta_{\mathbf{L}(\mathbf{t})}(\mathcal{H}_{\alpha\beta})=c_{1}\Delta_{\mathbf{L}(\mathbf{t})}(\mathcal{H}_{j\alpha})\cdot\Delta_{\mathbf{L}(\mathbf{t})}(\mathcal{H}_{i\beta})+c_{2}\Delta_{\mathbf{L}(\mathbf{t})}(\mathcal{H}_{j\beta})\cdot\Delta_{\mathbf{L}(\mathbf{t})}(\mathcal{H}_{i\alpha})
\label{eq: Grassmann-Plucker including permutations}
 \end{equation}
where $\mathcal{H}\in\wp_{k-2}[n]$, $i,j,\alpha,\beta\in\mathcal{H}^{\mathtt{C}}$, and signs 
\begin{eqnarray}
c_{1} & := & \mathrm{sign}\left[(i-j)\cdot(\alpha-\beta)\cdot(i-\beta)\cdot(\alpha-j)\right],\nonumber \\
c_{2} & := & \mathrm{sign}\left[(i-j)\cdot(\alpha-\beta)\cdot(i-\alpha)\cdot(j-\beta)\right]\label{eq: signs for permutation}
\end{eqnarray}
take into account the permutation of indices $i,j,\alpha,\beta$ with respect to a given order. The same relations hold for $\mathbf{R}(\mathbf{t})$. Multiplying term by term the resulting equations (\ref{eq: Grassmann-Plucker including permutations}) for $\mathbf{L}(\mathbf{t})$ and $\mathbf{R}(\mathbf{t})$, we obtain
\begin{eqnarray} 
h(\mathcal{I})\cdot h(\mathcal{I}_{\alpha\beta}^{ij}) & = & h(\mathcal{I}_{\alpha}^{i})\cdot h(\mathcal{I}_{\beta}^{j})+h(\mathcal{I}_{\beta}^{i})\cdot h(\mathcal{I}_{\alpha}^{j})\nonumber \\
&  & +c_{1}c_{2}\cdot \Delta_{\mathbf{R}(\mathbf{t})}(\mathcal{I}_{\alpha}^{i})\cdot\Delta_{\mathbf{R}(\mathbf{t})}(\mathcal{I}_{\beta}^{j})\cdot\Delta_{\mathbf{L}(\mathbf{t})}(\mathcal{I}_{\beta}^{i})\cdot\Delta_{\mathbf{L}(\mathbf{t})}(\mathcal{I}_{\alpha}^{j})\nonumber \\
&  & +c_{1}c_{2}\cdot \Delta_{\mathbf{R}(\mathbf{t})}(\mathcal{I}_{\beta}^{i})\cdot\Delta_{\mathbf{R}(\mathbf{t})}(\mathcal{I}_{\alpha}^{j})\cdot\Delta_{\mathbf{L}(\mathbf{t})}(\mathcal{I}_{\alpha}^{i})\cdot\Delta_{\mathbf{L}(\mathbf{t})}(\mathcal{I}_{\beta}^{j})
\label{eq: quartic from Grassmann-Plucker}
\end{eqnarray}
since $c_{1},c_{2}$ depend only on the indices $i,j,\alpha,\beta$. This expression can be written as 
\begin{eqnarray}
h(\mathcal{I})\cdot h(\mathcal{I}_{\alpha\beta}^{ij}) & = & h(\mathcal{I}_{\alpha}^{i})\cdot h(\mathcal{I}_{\beta}^{j})+h(\mathcal{I}_{\beta}^{i})\cdot h(\mathcal{I}_{\alpha}^{j})\nonumber \\
& & +c_{1}c_{2}\cdot\frac{\Delta_{\mathbf{R}(\mathbf{t})}(\mathcal{I}_{\alpha}^{i})}{\Delta_{\mathbf{R}(\mathbf{t})}(\mathcal{I}_{\beta}^{i})}\cdot h(\mathcal{I}_{\beta}^{i})\cdot\frac{\Delta_{\mathbf{R}(\mathbf{t})}(\mathcal{I}_{\beta}^{j})}{\Delta_{\mathbf{R}(\mathbf{t})}(\mathcal{I}_{\alpha}^{j})}\cdot h(\mathcal{I}_{\alpha}^{j})\nonumber \\
&  & +c_{1}c_{2}\cdot\frac{\Delta_{\mathbf{R}(\mathbf{t})}(\mathcal{I}_{\beta}^{i})}{\Delta_{\mathbf{R}(\mathbf{t})}(\mathcal{I}_{\alpha}^{i})}h(\mathcal{I}_{\beta}^{j})\cdot\frac{\Delta_{\mathbf{R}(\mathbf{t})}(\mathcal{I}_{\alpha}^{j})}{\Delta_{\mathbf{R}(\mathbf{t})}(\mathcal{I}_{\beta}^{j})}\cdot h(\mathcal{I}_{\alpha}^{i})
\label{eq: quartic from Grassmann-Plucker, b}
\end{eqnarray}
since we are assuming that all minors $\Delta_{\mathbf{R}(\mathbf{t})}(\mathcal{I})$, $\mathcal{I}\in\wp_{k}[n]$, are non-vanishing for a generic choice of $\mathbf{t}$. 
\begin{defn} 
\label{def: observable Y-terms} 
We define the \emph{$Y$-terms}, or cross-ratios, as
 \begin{equation}
Y(\mathcal{I})_{\alpha\beta}^{ij}:=c_{1}c_{2}\cdot\frac{\Delta_{\mathbf{R}(\mathbf{t})}(\mathcal{I}_{\alpha}^{i})}{\Delta_{\mathbf{R}(\mathbf{t})}(\mathcal{I}_{\beta}^{i})}\cdot\frac{\Delta_{\mathbf{R}(\mathbf{t})}(\mathcal{I}_{\beta}^{j})}{\Delta_{\mathbf{R}(\mathbf{t})}(\mathcal{I}_{\alpha}^{j})}
\label{eq: cross-section}
 \end{equation}
where (\ref{eq: signs for permutation}) gives 
\begin{equation}
c_{1}c_{2}=-\mathrm{sign}\left[(i-\alpha)\cdot(i-\beta)\cdot(j-\alpha)\cdot(j-\beta)\right]. 
\label{eq: c1c2 sign for Y}
\end{equation}
Furthermore, we introduce 
 \begin{equation}
Y(\mathcal{I}):=\left\{ Y(\mathcal{I})_{\alpha\beta}^{ij}:\,\chi(\mathcal{I}\mid_{\alpha\beta}^{ij})\text{ is observable}\right\}.
\label{eq: set of Y-terms of observables}
 \end{equation}
We extend the notation introduced in Definition \ref{def: observable sets} stating that $Y(\mathcal{I})_{\alpha\beta}^{ij}$ is \emph{observable} if $\chi(\mathcal{I}\mid_{\alpha\beta}^{ij})$ is an observable set. 
\end{defn}
The dependence of $Y_{\alpha\beta}^{ij}:=Y(\mathcal{I})_{\alpha\beta}^{ij}$ on the basis $\mathcal{I}$ will be implicit when no ambiguity arises. Being $Y(\mathcal{I})_{\beta\alpha}^{ij}=Y(\mathcal{I}_{\alpha\beta}^{ij})^{-1}$, Assumption \ref{claim: generic matrices} about $\mathrm{R}(\mathbf{t})$ is equivalent to the existence of $Y(\mathcal{I})_{\alpha\beta}^{ij}$ for all choices of bases and indices. 
For each generalised permutation matrix $\mathbf{D}(\mathbf{t})$ dependent on $\mathbf{t}$, the $Y$-terms (\ref{eq: cross-section}) are invariant with respect to the action  
 \begin{equation}
\mathbf{L}(\mathbf{t})\mapsto\mathbf{L}(\mathbf{t})\cdot\mathbf{D}(\mathbf{t})^{-1},\quad\mathbf{R}(\mathbf{t})\mapsto\mathbf{D}(\mathbf{t})\cdot\mathbf{R}(\mathbf{t}).
\label{eq: generalized permutation matrix gauge}
 \end{equation}
Using this invariance, we can fix a form for the matrices $\mathbf{R}(\mathbf{t})$ and $\mathbf{L}(\mathbf{t})$ while preserving $Y$-terms.
In particular, we choose   
 \begin{equation}
\mathbf{D}(\mathbf{t}):=\mathrm{diag}\left(\begin{array}{c
cccccc}
R_{k+1,1}(\mathbf{t}) 
& \dots & R_{k+1,k}(\mathbf{t}) & 1 & \frac{R_{k+1,1}(\mathbf{t})}{R_{k+2,1}(\mathbf{t})} & \dots & \frac{R_{k+1,1}(\mathbf{t})}{R_{n,1}(\mathbf{t})}\end{array}\right).
\label{eq: special gauge, diagonal, n}
 \end{equation}
In addition, we consider the right multiplication of $\mathbf{R}(\mathbf{t})$ by 
 \begin{equation}
\mathbf{d}(\mathbf{t}):=\text{rcef}(\mathbf{R}(\mathbf{t}))\cdot \mathrm{diag}\left(\begin{array}{cccc}
R_{k+1,1}(\mathbf{t})^{-1} & 
\dots & R_{k+1,k}(\mathbf{t})^{-1} \end{array}\right)\label{eq: special gauge, diagonal, k}
 \end{equation}
where the $k$-dimensional matrix $\text{rcef}(\mathbf{R}(\mathbf{t}))$ transforms $\mathbf{R}(\mathbf{t})$ into its reduced column echelon form. This preserves the determinantal expansion up to a common factor $\det\mathbf{d}(\mathbf{t})$ that is irrelevant in terms of Pl\"{u}cker coordinates; this additional factor can be absorbed through a left multiplication of $\mathbf{L}(\mathbf{t})$, e.g. by $\mathbf{d}(\mathbf{t})^{-1}$. In this way, $\mathbf{R}(\mathbf{t})$ takes the form 
 \begin{equation}
\mathbf{R}(\mathbf{t})\mapsto\mathbf{D}(\mathbf{t})\cdot\mathbf{R}(\mathbf{t})\cdot\mathbf{d}(\mathbf{t})=\left(\begin{array}{cccc}
1 & 0 & \dots & 0\\
0 & 1 & \dots & 0\\
\vdots & \vdots & \ddots & \vdots\\
0 & 0 & \dots & 1\\
1 & 1 & \dots & 1\\
1 & \frac{R_{k+1,1}R_{k+2,2}}{R_{k+1,2}R_{k+2,1}} & \dots & \frac{R_{k+1,1}R_{k+2,k}}{R_{k+1,k}R_{k+2,1}}\\
\vdots & \vdots & \ddots & \vdots\\
1 & \frac{R_{k+1,1}R_{n,2}}{R_{k+1,2}R_{n,1}} & \dots & \frac{R_{k+1,1}R_{n,k}}{R_{k+1,k}R_{n,1}}
\end{array}\right)
\label{eq: action of special gauge on R}
 \end{equation}
so that, for each $j\in\{2,\dots,k\}$ and $\alpha\in\{k+2,\dots,n\}$, we get
 \begin{equation}
R_{j\alpha}(\mathbf{t})\mapsto\frac{\Delta_{\mathbf{R}(\mathbf{t})}(\mathcal{V}_{k+1}^{1})}{\Delta_{\mathbf{R}(\mathbf{t})}(\mathcal{V}_{k+1}^{j})}\cdot\frac{\Delta_{\mathbf{R}(\mathbf{t})}(\mathcal{V}_{\alpha}^{j})}{\Delta_{\mathbf{R}(\mathbf{t})}(\mathcal{V}_{\alpha}^{1})}=c_{1}c_{2}\cdot Y_{(k+1)\alpha}^{1j}.
\label{eq: entries of R in special gauge}
 \end{equation}

\begin{rem}
\label{rem: dual Grassmannian representation} 
The previous representation allows us to discuss the duality defined by a pair of matrices $(\mathbf{L}(\mathbf{t})^{\perp},\mathbf{R}(\mathbf{t})^{\perp})$ representing the orthogonal complements of the subspaces associated with $(\mathbf{L}(\mathbf{t}),\mathbf{R}(\mathbf{t}))$. Given any $\mathcal{I}\in\mathfrak{G}(\mathbf{L}(\mathbf{1}))$, an appropriate choice of a permutation matrix $\mathbf{P}$ lets us map $\mathcal{I}$ into $[k]$ through $(\mathbf{L}(\mathbf{t}),\mathbf{R}(\mathbf{t}))\mapsto(\mathbf{L}(\mathbf{t})\cdot \mathbf{P},\mathbf{P}^{-1}\cdot\mathbf{R}(\mathbf{t}))$ while preserving the expansion (\ref{eq: Cauchy-Binet expansion}).
Then we consider the reduced row echelon form $\mathbf{\widehat{L(\mathbf{t})}}=:(\idd_{k}\mid\mathbf{l})$ of $\mathbf{L}(\mathbf{t})\cdot \mathbf{P}$ and the reduced column echelon form $\mathbf{\widehat{R(\mathbf{t})}}=:(\idd_{k}\mid\mathbf{r})^{\mathtt{T}}$ of $\mathbf{P}^{-1}\cdot\mathbf{R}(\mathbf{t})$, where $\idd_{k}$ denotes the $k$-dimensional identity matrix. In this way, we can focus on the case $\mathcal{I} = [k]$ by considering such matrices $\widehat{\mathbf{L}(\mathbf{t})}$ and $\widehat{\mathbf{R}(\mathbf{t})}$ obtained through dimension relabelling via (\ref{eq: generalized permutation matrix gauge}). A well-known result (see e.g. \citep{Oxley2006,Karp2017} and references therein for more details) asserts that the row span of $\widehat{\mathbf{L}(\mathbf{t})}^{\perp}$ (respectively, the column span of $\widehat{\mathbf{R}(\mathbf{t})}^{\perp}$) generates the same Pl\"{u}cker coordinates as $\mathbf{\widehat{L(\mathbf{t})}}$ (respectively, $\mathbf{\widehat{R(\mathbf{t})}}$) up to a sign; specifically, there exist two non-vanishing coefficients $C_{\mathbf{L}}$, $C_{\mathbf{R}}$ such that  
\begin{equation*}
\Delta_{\mathrm{alt}(\mathbf{\widehat{L(\mathbf{t})}})}(\mathcal{J}^{\mathtt{C}}) = C_{\mathbf{L}}\cdot\Delta_{\widehat{\mathbf{L}(\mathbf{t})}}(\mathcal{J}),
\quad 
\Delta_{\mathrm{alt}(\mathbf{\widehat{R(\mathbf{t})}})}(\mathcal{J}^{\mathtt{C}}) = C_{\mathbf{R}}\cdot\Delta_{\widehat{\mathbf{R}(\mathbf{t})}}(\mathcal{J})
\end{equation*}
for all $\mathcal{J}\in\wp_{k}[n]$, where the row span of $\widehat{\mathbf{L}(\mathbf{t})}^{\perp}$ and the column span of $\widehat{\mathbf{R}(\mathbf{t})}^{\perp}$ are expressed by $(\mathbf{l}^{\mathtt{T}}\mid -\idd_{n-k})$ and $(\mathbf{r}^{\mathtt{T}}\mid -\idd_{n-k})^{\mathtt{T}}$, respectively, and 
\begin{equation*} 
\mathrm{alt}(\mathbf{\widehat{L(\mathbf{t})}}) := 
(\mathbf{l}^{\mathtt{T}}\mid -\idd_{n-k})\cdot \mathrm{diag}((-1)^{i+1})_{i\in [n]}, \quad 
\mathrm{alt}(\mathbf{\widehat{R(\mathbf{t})}}) := \mathrm{diag}((-1)^{i+1})_{i\in [n]}\cdot (\mathbf{r}^{\mathtt{T}}\mid -\idd_{n-k})^{\mathtt{T}}.
\end{equation*}
In particular, if $n<2\cdot k$ we can work with the matrices $\mathrm{alt}(\mathbf{\widehat{L(\mathbf{t})}})$ and $\mathrm{alt}(\mathbf{\widehat{R(\mathbf{t})}})$ obtaining the same minor products as in (\ref{eq: Cauchy-Binet expansion}) up to a common multiplicative factor. Maximal minors extracted from these matrices have order $n-k$ instead of $k$. Therefore, the condition $\max\{n-k,k\}\geq 5$ in the statement of Theorem \ref{thm: rigidity theorem monomial} becomes $n-k\geq 5$ hereinafter, without loss of generality. 
\end{rem}

It is also easy to check that 
\begin{eqnarray}
Y_{\alpha\beta}^{ij}Y_{\beta\gamma}^{ij} & = & -Y_{\alpha\gamma}^{ij},\label{eq: associativity, d}\\
Y_{\alpha\beta}^{im}\cdot Y_{\alpha\beta}^{mj} & = & -Y_{\alpha\beta}^{ij}.
\label{eq: associativity, u}
\end{eqnarray}
Iterating (\ref{eq: associativity, d}), we obtain an identity that will be used several times in the rest of this work, which we refer to as \emph{quadrilateral decomposition}:
\begin{equation}
    Y_{\alpha\beta}^{ij} = -Y_{\alpha\delta}^{ij}\cdot Y_{\delta\beta}^{ij} 
    = -Y_{\alpha\delta}^{im}\cdot Y_{\alpha\delta}^{mj}\cdot Y_{\delta\beta}^{im}\cdot Y_{\delta\beta}^{mj},\quad i,j,m\in\mathcal{I},\,\alpha,\beta,\delta\in\mathcal{I}^{\mathtt{C}}.
    \label{eq: quadrilateral decomposition}
\end{equation}
Then, (\ref{eq: quartic from Grassmann-Plucker}) is equivalent to 
 \begin{equation}
h(\mathcal{I})\cdot h(\mathcal{I}_{\alpha\beta}^{ij})=h(\mathcal{I}_{\alpha}^{i})\cdot h(\mathcal{I}_{\beta}^{j})+Y_{\alpha\beta}^{ij}\cdot h(\mathcal{I}_{\beta}^{i})\cdot h(\mathcal{I}_{\alpha}^{j})+\frac{1}{Y_{\alpha\beta}^{ij}}\cdot h(\mathcal{I}_{\alpha}^{i})\cdot h(\mathcal{I}_{\beta}^{j})+h(\mathcal{I}_{\beta}^{i})\cdot h(\mathcal{I}_{\alpha}^{j})\label{eq: quartic from Grassmann-Plucker, 2}
 \end{equation}
where the dependence on $\mathbf{t}$ is implicit. 
\begin{rem}
\label{rem: no Y=-1} Note that $Y_{\alpha\beta}^{ij}\neq-1$
for all $\mathcal{\mathcal{I}\in\wp}_{k}[n]$, $i,j\in\mathcal{I}$, and $\alpha,\beta\in\mathcal{I}^{\mathtt{C}}$, unless $i=j$ or $\alpha=\beta$. Indeed, $Y_{\alpha\beta}^{ij}=-1$ is equivalent to 
\begin{equation*} 
\Delta_{\mathbf{R}(\mathbf{t})}(\mathcal{I}_{\alpha}^{i})\cdot\Delta_{\mathbf{R}(\mathbf{t})}(\mathcal{I}_{\beta}^{j})=-c_{1}c_{2}\cdot\Delta_{\mathbf{R}(\mathbf{t})}(\mathcal{I}_{\beta}^{i})\cdot\Delta_{\mathbf{R}(\mathbf{t})}(\mathcal{I}_{\alpha}^{j})
\end{equation*}
which implies $\Delta_{\mathbf{R}(\mathbf{t})}(\mathcal{I})\cdot\Delta_{\mathbf{R}(\mathbf{t})}(\mathcal{I}_{\alpha\beta}^{ij})=0$ by the Pl\"{u}cker relations (\ref{eq: Grassmann-Plucker including permutations}), and this contradicts the assumption $\Delta_{\mathbf{R}(\mathbf{t})}(\mathcal{I}_{\alpha\beta}^{ij})\neq0$. 
\end{rem}
We also note that (\ref{eq: associativity, d}) and (\ref{eq: associativity, u}) are special instances of (\ref{eq: quadrilateral decomposition}): these correspond to the degenerate cases where the lower, respectively, upper indices in some $Y$-terms coincide, leading to the only allowed case of $Y$-terms equal to $-1$, according to the definition (\ref{eq: cross-section}) and Remark \ref{rem: no Y=-1}.

Whether $\{0\}\subsetneq\chi(\mathcal{I}\mid_{\alpha\beta}^{ij})$, (\ref{eq: quartic from Grassmann-Plucker, 2}) is equivalent to $P(Y_{\alpha\beta}^{ij})=0$ where $P$ is a monic polynomial over $\mathbb{C}(\mathbf{t})$, since $h(\mathcal{I}_{\beta}^{i})\cdot h(\mathcal{I}_{\alpha}^{j})\neq0$ is invertible in $\mathbb{C}(\mathbf{t})$ by assumption; therefore, $Y_{\alpha\beta}^{ij}$ is integral over $\mathbb{C}(\mathbf{t})$, and we introduce the notation 
\begin{eqnarray}
A_{\alpha\beta}^{ij} & := & h(\mathcal{I})\cdot h(\mathcal{I}_{\alpha\beta}^{ij})-h(\mathcal{I}_{\alpha}^{i})\cdot h(\mathcal{I}_{\beta}^{j})-h(\mathcal{I}_{\beta}^{i})\cdot h(\mathcal{I}_{\alpha}^{j}),
\label{eq: expression cross-section quadratic case, a}\\
B_{\alpha\beta}^{ij} & := & \left(A_{\alpha\beta}^{ij}\right)^{2} -4\cdot h(\mathcal{I}_{\alpha}^{i})\cdot h(\mathcal{I}_{\beta}^{j})\cdot h(\mathcal{I}_{\beta}^{i})\cdot h(\mathcal{I}_{\alpha}^{j})
\label{eq: expression cross-section quadratic case, b}
\end{eqnarray}
to express 
 \begin{equation}
Y_{\alpha\beta}^{ij}=\frac{\varepsilon_{\alpha\beta}^{ij}\cdot\sqrt{B_{\alpha\beta}^{ij}}+A_{\alpha\beta}^{ij}}{2\cdot h(\mathcal{I}_{\beta}^{i})\cdot h(\mathcal{I}_{\alpha}^{j})}\label{eq: radical expression}
 \end{equation}
as a root of 
\begin{equation}
F_{\alpha\beta}^{ij}(X):=\left(h(\mathcal{I}_{\beta}^{i})\cdot h(\mathcal{I}_{\alpha}^{j})\right)\cdot X^{2}-A_{\alpha\beta}^{ij}\cdot X+h(\mathcal{I}_{\alpha}^{i})\cdot h(\mathcal{I}_{\beta}^{j})
\label{eq: polynomial h to Y}
\end{equation}
where $\varepsilon_{\alpha\beta}^{ij}\in\{+1,-1\}$ and $\sqrt{B_{\alpha\beta}^{ij}}$ is a fixed square root of $B_{\alpha\beta}^{ij}$. Even for the \textit{$A$-terms} (\ref{eq: expression cross-section quadratic case, a})
and the \textit{$B$-terms} (\ref{eq: expression cross-section quadratic case, b}) the dependence on $\mathcal{I}$ will be implicit where no ambiguity arises. 
For future convenience, we also introduce the following function based on (\ref{eq: expression cross-section quadratic case, b}) 
 \begin{equation}
B(x,y,z):=(x-y-z)^{2}-4yz.
\label{eq: B-function}
 \end{equation}
\begin{defn}
\label{def: radical triple}
We call $Y_{\alpha\beta}^{ij}$ a \emph{radical term} if $0\notin\chi(\mathcal{I}\mid_{\alpha\beta}^{ij})$ and $Y_{\alpha\beta}^{ij}\notin\mathbb{F}$. By extension, we say that the associated set $\chi(\mathcal{I}\mid_{\alpha\beta}^{ij})$ is a \emph{radical set}. 
We refer to $(Y_{\alpha\beta}^{ij},Y_{\beta\gamma}^{ij},Y_{\alpha\gamma}^{ij})$ a \emph{radical triple} if all its components are radical $Y$-terms.
\end{defn}

The exchange $\alpha\leftrightarrows\beta$ affects $Y_{\alpha\beta}^{ij}\mapsto Y_{\beta\alpha}^{ij}=(Y_{\alpha\beta}^{ij})^{-1}$ through the change of sign $\pm\mapsto\mp$ in (\ref{eq: radical expression}), 
while $c_{1}\cdot c_{2}$, $A_{\alpha\beta}^{ij}$, and $B_{\alpha\beta}^{ij}$ are preserved. 
From the Pl\"{u}cker relations (\ref{eq: Grassmann-Plucker including permutations}), we also find 
\begin{eqnarray}
Y(\mathcal{I}_{\alpha}^{i})_{i\beta}^{\alpha j} & = & -c_{2}\frac{\Delta_{\mathbf{R}(\mathbf{t})}(\mathcal{I})\cdot\Delta_{\mathbf{R}(\mathbf{t})}(\mathcal{I}_{\alpha\beta}^{ij})}{\Delta_{\mathbf{R}(\mathbf{t})}(\mathcal{I}_{\beta}^{i})\cdot\Delta_{\mathbf{R}(\mathbf{t})}(\mathcal{I}_{\alpha}^{j})}=-Y(\mathcal{I})_{\alpha\beta}^{ij}-1,\label{eq: Grassmann-Plucker translation exchange, vertical}\\
Y(\mathcal{I}_{\beta}^{i})_{\alpha i}^{\beta j} & = & -c_{1}\cdot\frac{\Delta_{\mathbf{R}(\mathbf{t})}(\mathcal{I}_{\alpha}^{i})}{\Delta_{\mathbf{R}(\mathbf{t})}(\mathcal{I})}\cdot\frac{\Delta_{\mathbf{R}(\mathbf{t})}(\mathcal{I}_{\beta}^{j})}{\Delta_{\mathbf{R}(\mathbf{t})}(\mathcal{I}_{\alpha\beta}^{ij})}=-\frac{1}{1+\left(Y(\mathcal{I})_{\alpha\beta}^{ij}\right)^{-1}}.
\label{eq: Grassmann-Plucker translation exchange, diagonal}
\end{eqnarray}
Together with the exchange $\alpha\leftrightarrows\beta$, (\ref{eq: Grassmann-Plucker translation exchange, vertical}) and (\ref{eq: Grassmann-Plucker translation exchange, diagonal}) define a set of \emph{local transformations} (with respect to the basis $\mathcal{I}$ and the indices $i,j$, $\alpha,\beta$) that will be useful in the following. 
\begin{lem}
\label{lem: observable sets with 0 are algebraic}
Any radical term $Y_{\alpha\beta}^{ij}$ forces $0\notin\chi(\mathcal{I}\mid_{\alpha\gamma}^{ij})$ for all $\gamma\in\mathcal{I}^{\mathtt{C}}$, $\gamma\neq \alpha$. 
\end{lem}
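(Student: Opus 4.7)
The plan is to argue by contradiction via a case analysis on which of the three entries of
$\chi(\mathcal{I}\mid_{\alpha\gamma}^{ij})=\{h(\mathcal{I})h(\mathcal{I}_{\alpha\gamma}^{ij}),\,h(\mathcal{I}_{\alpha}^{i})h(\mathcal{I}_{\gamma}^{j}),\,h(\mathcal{I}_{\gamma}^{i})h(\mathcal{I}_{\alpha}^{j})\}$
is assumed to vanish. The case $\gamma=\beta$ is the hypothesis that $Y_{\alpha\beta}^{ij}$ is radical, so take $\gamma\neq\alpha,\beta$. Radicality of $Y_{\alpha\beta}^{ij}$ already forces
$h(\mathcal{I}),h(\mathcal{I}_{\alpha}^{i}),h(\mathcal{I}_{\alpha}^{j}),h(\mathcal{I}_{\beta}^{i}),h(\mathcal{I}_{\beta}^{j}),h(\mathcal{I}_{\alpha\beta}^{ij})$ to be nonzero, so any zero entry in $\chi(\mathcal{I}\mid_{\alpha\gamma}^{ij})$ must come from one of $h(\mathcal{I}_{\gamma}^{i})=0$, $h(\mathcal{I}_{\gamma}^{j})=0$, or $h(\mathcal{I}_{\alpha\gamma}^{ij})=0$.

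First I would treat the two ``side'' vanishings. Suppose $h(\mathcal{I}_{\gamma}^{i})=0$, the case $h(\mathcal{I}_{\gamma}^{j})=0$ being symmetric in $i\leftrightarrow j$. Then the leading coefficient of $F_{\alpha\gamma}^{ij}$ in (\ref{eq: polynomial h to Y}) vanishes, the equation becomes linear, and $Y_{\alpha\gamma}^{ij}$ is expressed as a rational function of the $h$'s. Likewise, the constant coefficient of $F_{\gamma\beta}^{ij}$ equals $h(\mathcal{I}_{\gamma}^{i})h(\mathcal{I}_{\beta}^{j})=0$, so $F_{\gamma\beta}^{ij}(X)=X\cdot(\text{linear})$; Remark \ref{rem: no Y=-1} combined with the genericity of $\mathbf{R}(\mathbf{t})$ excludes $Y_{\gamma\beta}^{ij}=0$, hence $Y_{\gamma\beta}^{ij}$ is again rational in the $h$'s. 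The quadrilateral decomposition (\ref{eq: quadrilateral decomposition}) with $\delta=\gamma$ then reads $Y_{\alpha\beta}^{ij}=-Y_{\alpha\gamma}^{ij}\cdot Y_{\gamma\beta}^{ij}\in\mathbb{F}$, contradicting the radical hypothesis on $Y_{\alpha\beta}^{ij}$.

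The hard case is $h(\mathcal{I}_{\alpha\gamma}^{ij})=0$, where only $F_{\alpha\gamma}^{ij}$ degenerates. Here the quadratic factors as $(X+1)\bigl(h(\mathcal{I}_{\gamma}^{i})h(\mathcal{I}_{\alpha}^{j})X+h(\mathcal{I}_{\alpha}^{i})h(\mathcal{I}_{\gamma}^{j})\bigr)$, and Remark \ref{rem: no Y=-1} forces $Y_{\alpha\gamma}^{ij}=-h(\mathcal{I}_{\alpha}^{i})h(\mathcal{I}_{\gamma}^{j})/\bigl(h(\mathcal{I}_{\gamma}^{i})h(\mathcal{I}_{\alpha}^{j})\bigr)\in\mathbb{F}$. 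I then substitute $Y_{\gamma\beta}^{ij}=-Y_{\alpha\beta}^{ij}/Y_{\alpha\gamma}^{ij}$ into $F_{\gamma\beta}^{ij}=0$, producing a second quadratic annihilator of $Y_{\alpha\beta}^{ij}$ with coefficients in $\mathbb{F}$; radicality forces it to be proportional to $F_{\alpha\beta}^{ij}$, and matching the ratio of leading and constant coefficients yields the compact identity $h(\mathcal{I}_{\alpha}^{i})h(\mathcal{I}_{\gamma}^{j})=h(\mathcal{I}_{\gamma}^{i})h(\mathcal{I}_{\alpha}^{j})$. Expanding the $h$'s as $\Delta_{\mathbf{L}}\cdot\Delta_{\mathbf{R}}$ and using the Pl\"{u}cker relation for $\mathbf{L}$ with $\Delta_{\mathbf{L}(\mathbf{t})}(\mathcal{I}_{\alpha\gamma}^{ij})=0$, this identity rewrites as the sign relation $\Delta_{\mathbf{R}(\mathbf{t})}(\mathcal{I}_{\gamma}^{i})\Delta_{\mathbf{R}(\mathbf{t})}(\mathcal{I}_{\alpha}^{j})=-(c_{2}/c_{1})\Delta_{\mathbf{R}(\mathbf{t})}(\mathcal{I}_{\alpha}^{i})\Delta_{\mathbf{R}(\mathbf{t})}(\mathcal{I}_{\gamma}^{j})$; inserting this into the Pl\"{u}cker relation for $\mathbf{R}$ collapses the right-hand side to zero, forcing $\Delta_{\mathbf{R}(\mathbf{t})}(\mathcal{I})\Delta_{\mathbf{R}(\mathbf{t})}(\mathcal{I}_{\alpha\gamma}^{ij})=0$ and contradicting the genericity of $\mathbf{R}(\mathbf{t})$.

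The principal obstacle is this third case: the quadrilateral decomposition alone only shows that $Y_{\gamma\beta}^{ij}$ generates the same quadratic extension as $Y_{\alpha\beta}^{ij}$, which is perfectly consistent with radicality. The contradiction only emerges from comparing two distinct quadratic annihilators of $Y_{\alpha\beta}^{ij}$ and feeding the resulting algebraic identity back through both Pl\"{u}cker relations, where genericity of $\mathbf{R}(\mathbf{t})$ supplies the final obstruction.
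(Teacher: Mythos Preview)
Your case analysis is on the right track and your treatment of the central-vanishing case is valid, but there is a genuine gap in the side cases: you do not cover the sub-case where \emph{both} $h(\mathcal{I}_\gamma^i)=0$ and $h(\mathcal{I}_\gamma^j)=0$. In that situation the polynomial $F_{\alpha\gamma}^{ij}$ has all three coefficients equal to zero (the quartic relation (\ref{eq: quartic from Grassmann-Plucker, 2}) then also forces $h(\mathcal{I}_{\alpha\gamma}^{ij})=0$), so it imposes no constraint at all on $Y_{\alpha\gamma}^{ij}$, and your ``the equation becomes linear'' step fails. The same happens to $F_{\gamma\beta}^{ij}$. This is precisely where the paper brings in an auxiliary index $p\in\mathcal{I}\setminus\{i,j\}$ with $h(\mathcal{I}_\gamma^p)\neq 0$ (available since column $\gamma$ of $\mathbf{L}$ is not null) and uses the full quadrilateral decomposition $Y_{\alpha\beta}^{ij}=-Y_{\alpha\gamma}^{ip}Y_{\alpha\gamma}^{pj}Y_{\gamma\beta}^{ip}Y_{\gamma\beta}^{pj}$: each factor now comes from an observable set that contains $0$ (via $h(\mathcal{I}_\gamma^i)$ or $h(\mathcal{I}_\gamma^j)$) and also a nonzero term (via $h(\mathcal{I}_\gamma^p)$), hence lies in $\mathbb{F}$.

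For comparison, the paper avoids your three-way split entirely: local transformations (\ref{eq: Grassmann-Plucker translation exchange, vertical})--(\ref{eq: Grassmann-Plucker translation exchange, diagonal}) permute the three entries of $\chi(\mathcal{I}\mid_{\alpha\gamma}^{ij})$ while preserving radicality of $Y_{\alpha\beta}^{ij}$, so one may reduce to the side-vanishing case $h(\mathcal{I}_\gamma^i)=0$ and then run the auxiliary-$p$ argument once. Your Case~3 reasoning is correct but longer than necessary: once you obtain $h(\mathcal{I}_\alpha^i)h(\mathcal{I}_\gamma^j)=h(\mathcal{I}_\gamma^i)h(\mathcal{I}_\alpha^j)$, this is exactly the statement $Y_{\alpha\gamma}^{ij}=-1$ (by your own formula for $Y_{\alpha\gamma}^{ij}$), which already contradicts Remark~\ref{rem: no Y=-1} without unpacking the $\mathbf{L}$- and $\mathbf{R}$-Pl\"{u}cker relations separately.
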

\begin{proof}
All observable sets $\chi(\mathcal{I}\mid_{\gamma\delta}^{ml})$ with $0\in\chi(\mathcal{I}\mid_{\gamma\delta}^{ml})$ lie in $\mathbb{F}$, as can easily be seen from (\ref{eq: quartic from Grassmann-Plucker, 2}). 
It follows that any radical $Y$-term $Y_{\alpha\beta}^{ij}$ satisfies 
 \begin{equation}
h(\mathcal{I})\cdot h(\mathcal{I}_{\alpha\beta}^{ij})\cdot h(\mathcal{I}_{\alpha}^{i})\cdot h(\mathcal{I}_{\beta}^{i})\cdot h(\mathcal{I}_{\alpha}^{j})\cdot h(\mathcal{I}_{\beta}^{j})\neq0
\label{eq: all 3 terms non-vanishing}
 \end{equation}
so $\mathcal{I}_{\gamma}^{m}\in\mathfrak{G}(\mathbf{L}(\mathbf{1}))$ for all $\gamma\in\{\alpha,\beta\}$ and $m\in\{i,j\}$. The term $Y_{\alpha\beta}^{ij}$ remains radical under each change of basis $\mathcal{I}\mapsto\mathcal{I}_{\gamma}^{m}$, so whether $0\in\chi(\mathcal{I}\mid_{\alpha\omega}^{ij})$ we can assume $h(\mathcal{I}_{\omega}^{i})=0$, since the other cases are obtained from changes of bases generated by local transformations. Due to the lack of null columns, we can find $p\in\mathcal{I}$ such that $h(\mathcal{I}_{\omega}^{p})\neq 0$, and we choose $p=j$ if $h(\mathcal{I}_{\omega}^{j})\neq 0$. So $\chi(\mathcal{I}\mid_{\gamma\omega}^{pm})$ is observable and contains $0$ for all $\gamma\in\{\alpha,\beta\}$ and $m\in\{i,j\}$, then $Y_{\gamma\omega}^{pm}\in\mathbb{F}$ and, from (\ref{eq: quadrilateral decomposition}), $Y_{\alpha\beta}^{ij}\in\mathbb{F}$, i.e. a contradiction. 
\end{proof}
\begin{rem}
\label{rem: transformations and permutation group}
There are three basic, non-equivalent actions that can be carried out on the indices of $\chi(\mathcal{I}\mid_{\alpha\beta}^{ij})$: the identity, the exchange of upper indices $i\leftrightarrows j$ (equivalently, lower indices $\alpha\leftrightarrows \beta$), and the change of basis $\mathcal{I}\mapsto\mathcal{I}_{\alpha}^{i}$ (equivalently, $\mathcal{I}\mapsto\mathcal{I}_{\beta}^{j}$). Other operations can be obtained from the composition of the previous ones: for instance,  $\mathcal{I}\mapsto\mathcal{I}_{\alpha}^{j}$ is obtained from the composition of $i\leftrightarrows j$, the above-mentioned exchange of basis, and a second exchange $i\leftrightarrows \alpha$. 

Looking at the action of these transformations on the $Y$-terms, they are obtained from the composition of the inversion $f_{h}(Y):=Y\mapsto Y^{-1}$ associated with $i\leftrightarrows j$, and the transformation $f_{v}(Y):=-1-Y$ associated with $i\leftrightarrows \alpha$ as in (\ref{eq: Grassmann-Plucker translation exchange, vertical}). These two functions are involutions and can be matched with transpositions in the permutation group $\mathcal{S}_{3}$: the identity is assigned to the trivial permutation, $f_{h}$ is assigned to the transposition $(1\,2)$, and $f_{v}$ is assigned to $(2\,3)$. In this way, a combination of these two functions corresponds to the product of the associated transpositions, and the different combinations are recovered by decomposing the elements in $\mathcal{S}_{3}$ in terms of $(1\,2)$ and $(2\,3)$. In particular, (\ref{eq: Grassmann-Plucker translation exchange, diagonal}) comes from the composition $f_{h}\circ f_{v} \circ f_{h}$, which is mapped to the product of transpositions $(1\,2)(2\,3)(1\,2)=(1\,3)$. 
\end{rem}

\section{\label{sec: Algebraic constraints on algebraic extensions}Constraints on algebraic extensions}

For a given unit $G_{\alpha\beta}^{ij}\in\mathbb{C}(\mathbf{t})$, we introduce 
 \begin{equation}
\hat{\chi}(\mathcal{I}\mid_{\alpha\beta}^{ij}):=\left\{ (G_{\alpha\beta}^{ij})^{-1}\cdot X,\,X\in\chi(\mathcal{I}\mid_{\alpha\beta}^{ij})\right\}.
\label{eq: normalised three-term set}
 \end{equation}
We anticipate that the scaling (\ref{eq: normalised three-term set}) will be used later in this work with different definitions of the monomial $G_{\alpha\beta}^{ij}$. In this subsection, we consider the ``ground'' monic monomial in $\mathbb{C}(\mathbf{t})$ 
\begin{equation}
G_{\alpha\beta}^{ij} := 
\prod_{c=1}^{d}t_{c}^{\min\left\{ \Psi\left(h(\mathcal{I})\cdot h(\mathcal{I}_{\alpha\beta}^{ij})\right)_{c},\Psi\left(h(\mathcal{I}_{\alpha}^{i})\cdot h(\mathcal{I}_{\beta}^{j})\right)_{c},\Psi\left(h(\mathcal{I}_{\beta}^{i})\cdot h(\mathcal{I}_{\alpha}^{j})\right)_{c}\right\} }.
\label{eq: greatest common divisor}
\end{equation}
Note that $B_{\alpha\beta}^{ij}$ and $G_{\alpha\beta}^{ij}$ are symmetric under permutations of elements of $\chi(\mathcal{I}\mid_{\alpha\beta}^{ij})$, so these quantities are preserved by the simultaneous application of a bijection $\pi:\,\{i,j,\alpha,\beta\}\longrightarrow\{i,j,\alpha,\beta\}$ and the change of basis induced by $\pi$. 

We start from the following lemma, which contains a very basic result of practical relevance for the discussion, as it will be repeatedly recalled in the rest of the paper.  
\begin{lem}
\label{lem: case B^(ij)_(ab) square} If (\ref{eq: all 3 terms non-vanishing}) is verified and $B_{\alpha\beta}^{ij}$ is a perfect square in $\mathbb{C}(\mathbf{t})$, then $Y_{\alpha\beta}^{ij}\in\mathbb{C}$. 
\end{lem}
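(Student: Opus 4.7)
My plan is to combine the quadratic formula (\ref{eq: radical expression}) with the factorisation $(A_{\alpha\beta}^{ij})^{2}-B_{\alpha\beta}^{ij}=4\,h(\mathcal{I}_{\alpha}^{i})h(\mathcal{I}_{\beta}^{j})h(\mathcal{I}_{\beta}^{i})h(\mathcal{I}_{\alpha}^{j})$ that follows from (\ref{eq: expression cross-section quadratic case, a})-(\ref{eq: expression cross-section quadratic case, b}). Under (\ref{eq: all 3 terms non-vanishing}) and the monomial hypothesis (\ref{eq: monomial terms of Cauchy-Binet expansion}), each $h$-value is a nonzero Laurent monomial, so the right-hand side is a nonzero Laurent monomial and thus a unit of the Laurent polynomial ring $\mathbb{C}(\mathbf{t})$. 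Let $Q\in\mathbb{C}(\mathbf{t})$ denote a fixed square root of $B_{\alpha\beta}^{ij}$, which exists by the perfect-square hypothesis. The identity $(A_{\alpha\beta}^{ij}-Q)(A_{\alpha\beta}^{ij}+Q)=(A_{\alpha\beta}^{ij})^{2}-B_{\alpha\beta}^{ij}$ exhibits this unit as a product of two elements of $\mathbb{C}(\mathbf{t})$. Since the units of $\mathbb{C}(\mathbf{t})=\mathbb{C}[\mathbf{t},\mathbf{t}^{-1}]$ are exactly the nonzero Laurent monomials and the ring is an integral domain, both factors $A_{\alpha\beta}^{ij}\pm Q$ must themselves be Laurent monomials. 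Substituting into (\ref{eq: radical expression}) and dividing by the Laurent monomial $2\,h(\mathcal{I}_{\beta}^{i})h(\mathcal{I}_{\alpha}^{j})$, one obtains $Y_{\alpha\beta}^{ij}=\mu\,\mathbf{t}^{\mathbf{e}}$ for some $\mu\in\mathbb{C}\setminus\{0\}$ and $\mathbf{e}\in\mathbb{Z}^{d}$.

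The main task is then to prove that the three exponents $\mathbf{u}_{0}:=\Psi(h(\mathcal{I})h(\mathcal{I}_{\alpha\beta}^{ij}))$, $\mathbf{u}_{1}:=\Psi(h(\mathcal{I}_{\beta}^{i})h(\mathcal{I}_{\alpha}^{j}))$, and $\mathbf{u}_{2}:=\Psi(h(\mathcal{I}_{\alpha}^{i})h(\mathcal{I}_{\beta}^{j}))$ all coincide. Writing $A_{\alpha\beta}^{ij}=\tfrac{1}{2}[(A_{\alpha\beta}^{ij}+Q)+(A_{\alpha\beta}^{ij}-Q)]$ shows that $A_{\alpha\beta}^{ij}$ has support of cardinality at most two as a Laurent polynomial; by (\ref{eq: expression cross-section quadratic case, a}), however, it is a signed sum of three nonzero Laurent monomials with exponents $\mathbf{u}_{0},\mathbf{u}_{1},\mathbf{u}_{2}$. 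Hence at least two of these must agree. Suppose only two do: after factoring out the common Laurent monomial prefactor and setting $s:=\mathbf{t}^{\mathbf{w}}$, with $\mathbf{w}\in\mathbb{Z}^{d}\setminus\{0\}$ equal to the difference between the coincident exponents and the remaining one, $B_{\alpha\beta}^{ij}$ reduces (up to a Laurent monomial factor) to a polynomial of the form $c_{0}+c_{1}s+c_{2}s^{2}$ with $c_{0},c_{2}\in\mathbb{C}\setminus\{0\}$. The main obstacle is verifying that this quadratic cannot be a perfect square in $\mathbb{C}[\mathbf{t},\mathbf{t}^{-1}]$ unless its discriminant vanishes: writing $c_{0}+c_{1}s+c_{2}s^{2}=c_{2}(s-r_{1})(s-r_{2})$ over $\mathbb{C}$ and analysing the irreducible factorisation of $s-r_{i}$ in the UFD $\mathbb{C}[\mathbf{t},\mathbf{t}^{-1}]$ via roots of unity of order equal to the gcd of the entries of $\mathbf{w}$, the two factor orbits are disjoint unless $r_{1}=r_{2}$, so being a perfect square forces $r_{1}=r_{2}$, i.e.\ $c_{1}^{2}-4c_{0}c_{2}=0$. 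Substituting the explicit values of $c_{0},c_{1},c_{2}$ coming from (\ref{eq: B-function}) and the monomial form of the $h$-values, this discriminant condition becomes the vanishing of a product of two of the nonzero constants $g_{\mathcal{J}}$ of (\ref{eq: monomial terms of Cauchy-Binet expansion}), contradicting (\ref{eq: all 3 terms non-vanishing}). The two symmetric sub-cases (only $\mathbf{u}_{0}=\mathbf{u}_{2}$, or only $\mathbf{u}_{1}=\mathbf{u}_{2}$) are handled by permuting the roles of the exponents. Therefore $\mathbf{u}_{0}=\mathbf{u}_{1}=\mathbf{u}_{2}$; call their common value $\mathbf{u}$.

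Under this equality, every element of $\chi(\mathcal{I}\mid_{\alpha\beta}^{ij})$ is a complex constant times $\mathbf{t}^{\mathbf{u}}$, so $A_{\alpha\beta}^{ij}$ and $B_{\alpha\beta}^{ij}$ are complex constants times $\mathbf{t}^{\mathbf{u}}$ and $\mathbf{t}^{2\mathbf{u}}$ respectively. After factoring out $\mathbf{t}^{2\mathbf{u}}$, the defining quadratic (\ref{eq: polynomial h to Y}) reduces to a polynomial equation in $Y_{\alpha\beta}^{ij}$ with constant coefficients in $\mathbb{C}$. Since $\mathbb{C}$ is algebraically closed, its roots lie in $\mathbb{C}$, whence $Y_{\alpha\beta}^{ij}\in\mathbb{C}$. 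The only non-routine step is the discriminant-vanishing argument in the previous paragraph; once this is in place, the remaining simplification of (\ref{eq: polynomial h to Y}) is straightforward.
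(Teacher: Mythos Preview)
Your proof follows essentially the same strategy as the paper's: factor $(A_{\alpha\beta}^{ij})^{2}-B_{\alpha\beta}^{ij}$ as a product of two Laurent polynomials whose product is a unit, conclude that both factors are units so that $A_{\alpha\beta}^{ij}$ has support of size at most two, deduce that at least two of the three exponents $\mathbf{u}_{0},\mathbf{u}_{1},\mathbf{u}_{2}$ coincide, and then argue that if only two coincide $B_{\alpha\beta}^{ij}$ cannot be a perfect square. The paper differs only in how it carries out this last step: instead of treating the three sub-cases by symmetry and computing a discriminant, it performs a local change of basis $\mathcal{J}\in\{\mathcal{I},\mathcal{I}_{\alpha}^{i},\mathcal{I}_{\beta}^{i}\}$ so that the \emph{central} term $h(\mathcal{J})h(\mathcal{J}_{\alpha\beta}^{ij})$ is one of the two proportional ones. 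This produces the normalized form $(1-c)^{2}\tau^{2}-2(c+1)\tau+1$ with constant term $1$, and the paper then observes directly that this is a square in $\mathbb{C}(\mathbf{t})$ only if $\tau\in\mathbb{C}$.

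There is one small oversight in your version. You assert $c_{0},c_{2}\in\mathbb{C}\setminus\{0\}$, but since $B(x,y,z)$ is symmetric in its three arguments, one of these two coefficients always equals $(g_{a}-g_{b})^{2}$, where $g_{a},g_{b}$ are the complex coefficients of the two terms sharing the coincident exponent; nothing in the hypotheses rules out $g_{a}=g_{b}$. In that degenerate case your quadratic in $s$ collapses to a binomial $-4g_{a}g_{c}\,s+g_{c}^{2}$ (both coefficients nonzero), and the factorisation $c_{2}(s-r_{1})(s-r_{2})$ you invoke is no longer available. The repair is easy---a binomial $a\mathbf{t}^{\mathbf{p}}+b\mathbf{t}^{\mathbf{q}}$ with $\mathbf{p}\neq\mathbf{q}$ and $a,b\neq 0$ factors into $\gcd$-many pairwise non-associate irreducibles in $\mathbb{C}(\mathbf{t})$ and is therefore never a square---but it should be stated explicitly. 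The paper's change-of-basis manoeuvre sidesteps this issue entirely, since after the transformation the constant term is always $1$.
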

\begin{proof}
Let $B_{\alpha\beta}^{ij}=P^{2}$ for some $P\in\mathbb{C}(\mathbf{t})$.
From the definitions (\ref{eq: expression cross-section quadratic case, a})-(\ref{eq: expression cross-section quadratic case, b}), we get 
 \begin{equation}
\left(A_{\alpha\beta}^{ij}-P\right)\cdot\left(A_{\alpha\beta}^{ij}+P\right)=4\cdot h(\mathcal{I}_{\alpha}^{i})\cdot h(\mathcal{I}_{\beta}^{i})\cdot h(\mathcal{I}_{\alpha}^{j})\cdot h(\mathcal{I}_{\beta}^{j}).
\label{eq: factorization if B perfect square}
 \end{equation}
Therefore, both $A_{\alpha\beta}^{ij}-P$ and $A_{\alpha\beta}^{ij}+P$ are invertible in $\mathbb{C}(\mathbf{t})$, and  
their sum $2A_{\alpha\beta}^{ij}$ has sparsity at most $2$ (as well as their difference $2P$). From (\ref{eq: expression cross-section quadratic case, a}), this means that at least two of the elements in $\chi(\mathcal{I}\mid_{\alpha\beta}^{ij})$ are proportional over $\mathbb{C}(\mathbf{t})$. Changing the basis $\mathcal{I}\mapsto\mathcal{J}$ through a local transformation $\mathcal{J}\in\{\mathcal{I},\mathcal{I}_{\alpha}^{i},\mathcal{I}_{\beta}^{i}\}$, we can get $h(\mathcal{J})\cdot h(\mathcal{J}_{\alpha\beta}^{ij})=c\cdot h(\mathcal{J}_{\alpha}^{i})\cdot h(\mathcal{J}_{\beta}^{j})$,
$c\in\mathbb{C}\setminus\{0\}$, while preserving the factors of $B_{\alpha\beta}^{ij}$ in $\mathbb{C}(\mathbf{t})$. We label the unit $\tau:=h(\mathcal{J}_{\alpha}^{i})\cdot h(\mathcal{J}_{\beta}^{j})\cdot h(\mathcal{J}_{\beta}^{i})^{-1}\cdot h(\mathcal{J}_{\alpha}^{j})^{-1}$ to express 
 \begin{equation}
\frac{B_{\alpha\beta}^{ij}}{h(\mathcal{J}_{\beta}^{i})^{2}\cdot h(\mathcal{J}_{\alpha}^{j})^{2}}=(1-c)^{2}\cdot\tau^{2}-2(c+1)\cdot\tau+1.
\label{eq: B with two proportional terms}
 \end{equation}
Being $c\neq0$, $B_{\alpha\beta}^{ij}$ is a square in $\mathbb{C}(\mathbf{t})$ only if $\tau\in\mathbb{C}$: by definition, this means that $\chi(\mathcal{I}\mid_{\alpha\beta}^{ij})$ is integrable and hence $Y_{\alpha\beta}^{ij}\in\mathbb{C}$.
\end{proof}

\begin{lem}
\label{lem: common squarefree} All observable $Y$-terms with basis $\mathcal{I}$ lie in the same quadratic extension of $\mathbb{F}$. 
\end{lem}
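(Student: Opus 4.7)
If every observable $Y$-term with basis $\mathcal{I}$ already lies in $\mathbb{F}$, the statement is vacuous (take any quadratic extension containing $\mathbb{F}$). Otherwise, fix one radical $Y_{\star}:=Y(\mathcal{I})_{\alpha\beta}^{ij}$ and let $K:=\mathbb{F}(Y_{\star})=\mathbb{F}(\sqrt{B_{\alpha\beta}^{ij}})$ be the associated genuine quadratic extension. The goal is to show that every other observable $Y(\mathcal{I})_{\gamma\delta}^{kl}$ also lies in $K$, i.e.\ that the classes $[B_{\alpha\beta}^{ij}]$ and $[B_{\gamma\delta}^{kl}]$ coincide in $\mathbb{F}^{\times}/(\mathbb{F}^{\times})^{2}$.

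\emph{Step 1 — Reduction to the adjacent case.} The plan is to chain the comparison through $Y$-terms that share a pair of indices. Using the quadrilateral decomposition (\ref{eq: quadrilateral decomposition}) with a pivot $(m,\delta_{0})\in\mathcal{I}\times\mathcal{I}^{\mathtt{C}}$, any observable $Y$-term factors into four $Y$-terms, each of which shares one upper and one lower index with any prescribed reference; so it suffices to handle pairs of observable $Y$-terms related by a single application of (\ref{eq: associativity, d}) or (\ref{eq: associativity, u}). Lemma \ref{lem: observable sets with 0 are algebraic} ensures that all intervening $h$-values are non-zero around $Y_{\star}$, legitimising these manipulations.

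\emph{Step 2 — The adjacent case.} Consider three observable $Y$-terms with shared upper indices $(i,j)$: by (\ref{eq: associativity, d}), $Y_{1}\cdot Y_{2}=-Y_{3}$ with $Y_{1}:=Y(\mathcal{I})_{\alpha\beta}^{ij}$, $Y_{2}:=Y(\mathcal{I})_{\beta\gamma}^{ij}$, $Y_{3}:=Y(\mathcal{I})_{\alpha\gamma}^{ij}$ (the upper-index variant is handled identically via (\ref{eq: associativity, u})). Suppose, for contradiction, $K_{1}:=\mathbb{F}(Y_{1})\neq K_{2}:=\mathbb{F}(Y_{2})$. Write $Y_{r}=a_{r}+b_{r}\sqrt{d_{r}}$ with $a_{r},b_{r}\in\mathbb{F}$ and $b_{r}\neq 0$ in the radical case. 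Expanding $Y_{1}Y_{2}$ on the $\mathbb{F}$-basis $\{1,\sqrt{d_{1}},\sqrt{d_{2}},\sqrt{d_{1}d_{2}}\}$ of the compositum $K_{1}K_{2}$ (of degree $4$ over $\mathbb{F}$) and enforcing that $-Y_{3}$ lies in a quadratic subfield of $K_{1}K_{2}$ forces $a_{1}=a_{2}=0$, which via (\ref{eq: expression cross-section quadratic case, a}) amounts to $A_{\alpha\beta}^{ij}=A_{\beta\gamma}^{ij}=0$.

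The main obstacle is to exclude this degenerate configuration. The plan is to couple the triple above with a second triple obtained from the upper-index associativity (\ref{eq: associativity, u}) sharing the pair $(\alpha,\beta)$, exploiting that $\max\{n-k,k\}\geq 5$ grants enough distinct indices for independent applications of both (\ref{eq: associativity, d}) and (\ref{eq: associativity, u}) around $Y_{\star}$. The simultaneous vanishing of $A$-terms propagated across these two triples forces the corresponding $B$-terms to be perfect squares in $\mathbb{C}(\mathbf{t})$; Lemma \ref{lem: case B^(ij)_(ab) square} then collapses the involved $Y$-terms to constants in $\mathbb{C}$, contradicting the radicality of $Y_{\star}$. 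Ruling out the degenerate case yields $K_{1}=K_{2}$, and hence $K_{3}\subseteq K_{1}K_{2}=K_{1}$, which combined with Step 1 shows every observable $Y$-term with basis $\mathcal{I}$ lies in the single quadratic extension $K$.
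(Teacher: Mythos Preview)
Your overall strategy---reduce to ``adjacent'' triples linked by (\ref{eq: associativity, d}) or (\ref{eq: associativity, u}), then analyse the compositum of the two quadratic extensions---is the same as the paper's. The gap is in how you dispose of the degenerate case $a_{1}=a_{2}=0$.

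Your proposed mechanism (couple with a second triple from the upper-index associativity, invoke $\max\{n-k,k\}\geq 5$, and claim this ``propagation'' forces the $B$-terms to be perfect squares) does not work as stated, and the dimensional bound is not used anywhere in the paper's proof of this lemma. The correct resolution is immediate and purely local: $a_{1}=0$ means $A_{\alpha\beta}^{ij}=0$, i.e.\ the three terms $h(\mathcal{I})h(\mathcal{I}_{\alpha\beta}^{ij})$, $h(\mathcal{I}_{\alpha}^{i})h(\mathcal{I}_{\beta}^{j})$, $h(\mathcal{I}_{\beta}^{i})h(\mathcal{I}_{\alpha}^{j})$ satisfy a nontrivial $\mathbb{C}$-linear relation. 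But by the monomial hypothesis (\ref{eq: monomial terms of Cauchy-Binet expansion}) each of these is a single Laurent monomial, and three monomials can sum to zero only if all three share the same exponent vector. Hence $\chi(\mathcal{I}\mid_{\alpha\beta}^{ij})$ is integrable, $B_{\alpha\beta}^{ij}$ is a constant times a squared monomial, and Lemma~\ref{lem: case B^(ij)_(ab) square} gives $Y_{\alpha\beta}^{ij}\in\mathbb{C}$, contradicting radicality of $Y_{1}$ directly. No second triple is required.

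A secondary issue: your Step~1 reduction is too brisk. The quadrilateral decomposition (\ref{eq: quadrilateral decomposition}) factors $Y_{\gamma_{1}\gamma_{2}}^{m_{1}m_{2}}$ through four intermediate $Y$-terms, but you must verify that each intermediate $\chi$-set is observable in order to apply the adjacent-case argument to it. The paper spends the second half of its proof on exactly this, distinguishing whether $\chi(\mathcal{I}\mid_{\alpha\beta}^{m_{1}m_{2}})$ or $\chi(\mathcal{I}\mid_{\gamma_{1}\gamma_{2}}^{ij})$ is radical or contains $0$, and in the latter case identifying suitable indices $\overline{i},\overline{m}$ so that a chain of three applications of (\ref{eq: same quadratic extension, only upper or lower}) goes through. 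Lemma~\ref{lem: observable sets with 0 are algebraic} alone does not guarantee that an arbitrarily chosen pivot $(m,\delta_{0})$ yields observable intermediate sets.
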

\begin{proof}
We can assume the existence of a radical term $Y_{\alpha\beta}^{ij}$, otherwise all the observable $Y$-terms lie in $\mathbb{F}$ and the thesis follows. 
By Lemma \ref{lem: observable sets with 0 are algebraic}, we can focus on observable sets that satisfy (\ref{eq: all 3 terms non-vanishing}) for the rest of the proof: being $\mathbb{C}(\mathbf{t})$ a unique factorisation domain, we can express 
 \begin{equation}
B_{\alpha\beta}^{ij}:=\left(Q_{\alpha\beta}^{ij}\right)^{2}\cdot D\label{eq: factorization square and squarefree}
 \end{equation}
where $Q_{\alpha\beta}^{ij},D\in\mathbb{C}(\mathbf{t})$ and $D$ is a squarefree polynomial, that is, there exists no $P\in\mathbb{C}(\mathbf{t})$ such that $P^{2}\mid D$. Then, we prove   
 \begin{equation}
Y_{\gamma\delta}^{ml}\in\mathbb{F}(\sqrt{Y_{\alpha\beta}^{ij}})=\mathbb{F}(\sqrt{D})
\label{eq: same algebraic extension}
 \end{equation}
for observable terms $Y_{\gamma\delta}^{ml}$, where $\mathbb{F}(\sqrt{Y_{\alpha\beta}^{ij}})$ denotes the algebraic extension of $\mathbb{F}$ by a square root of $Y_{\alpha\beta}^{ij}$.

We start from the instance $\{m,l\}=\{i,j\}$ and $\delta\in\{\alpha,\beta\}$, noting that we only have to consider radical triples $(Y_{\alpha\beta}^{ij},Y_{\beta\gamma}^{ij},Y_{\alpha\gamma}^{ij})$: being (\ref{eq: all 3 terms non-vanishing}) verified by $\chi(\mathcal{I}|_{\alpha\beta}^{ij})$ and $\chi(\mathcal{I}|_{\gamma\delta}^{ij})$, all the components of this triple are observable, and from (\ref{eq: associativity, d}) we get the thesis (\ref{eq: same algebraic extension}) when $Y_{\gamma\alpha}^{ij}\in\mathbb{F}$ or $Y_{\gamma\beta}^{ij}\in\mathbb{F}$. Therefore, taking into account (\ref{eq: radical expression}) for 
the $Y$-terms in the expression $Y_{\alpha\beta}^{ij}=-Y_{\alpha\gamma}^{ij}\cdot Y_{\gamma\beta}^{ij}$, we get  
 \begin{equation}
Y_{\alpha\gamma}^{ij}\in\mathbb{F}(\sqrt{B_{\alpha\beta}^{ij}})\Leftrightarrow Y_{\beta\gamma}^{ij}\in\mathbb{F}(\sqrt{B_{\alpha\beta}^{ij}}).
\label{eq: conditionals same extension}
 \end{equation}
Using again the assumption $Y_{\alpha\beta}^{ij}$, $Y_{\alpha\gamma}^{ij}$, $Y_{\gamma\beta}^{ij}\notin\mathbb{F}$, the product $Y_{\alpha\gamma}^{ij}\cdot Y_{\gamma\beta}^{ij}$ lies in a quadratic extension of $\mathbb{F}$ only if (\ref{eq: same algebraic extension}) holds. 
This argument can be adapted to triples $(Y_{\alpha\beta}^{ij},Y_{\alpha\beta}^{im},Y_{\alpha\beta}^{jm})$ by transposition of the upper and lower indices. Consequently, for all $\gamma_{1},\gamma_{2}\in\mathcal{I}^{\mathtt{C}}$ and $m_{1},m_{2}\in\mathcal{I}$, such that $Y_{\gamma_{1}\gamma_{2}}^{ij}$ and $Y_{\alpha\beta}^{m_{1}m_{2}}$ are observable, $Y_{\gamma_{w}\alpha}^{ij}$ and $Y_{\alpha\beta}^{m_{u}i}$ are also observable for all $u,w\in[2]$, and we have 
\begin{equation}
    Y_{\gamma_{1}\gamma_{2}}^{ij}=-Y_{\gamma_{1}\alpha}^{ij}\cdot Y_{\alpha\gamma_{2}}^{ij}\in \mathbb{F}(\sqrt{D}),\quad  Y_{\alpha\beta}^{m_{1}m_{2}}=-Y_{\alpha\beta}^{m_{1}i}\cdot Y_{\alpha\beta}^{im_{2}}\in \mathbb{F}(\sqrt{D}).
\label{eq: same quadratic extension, only upper or lower}
\end{equation}

Then, we move to radical terms $Y_{\gamma_{1}\gamma_{2}}^{m_{1}m_{2}}$ with $\{\alpha,\beta\}\neq\{\gamma_{1},\gamma_{2}\}$ and $\{i,j\}\neq\{m_{1},m_{2}\}$: for all $a,b,u,w\in[2]$, $i_{a}\in\{i,j\}$, and $\alpha_{b}\in\{\alpha,\beta\}$, the set $\chi(\mathcal{I}|_{\gamma_{w}\alpha_{b}}^{m_{u}i_{a}})$ is observable since $h(\mathcal{I}_{\alpha_{b}}^{i_{a}})\cdot h(\mathcal{I}_{\gamma_{w}}^{m_{u}})\neq0$. For any $u,w\in[2]$, we use such sets and (\ref{eq: quadrilateral decomposition}) to express  
\begin{equation} 
Y_{\alpha\beta}^{ij}=-Y_{\alpha\gamma_{w}}^{im_{u}}Y_{\alpha\gamma_{w}}^{m_{u}j}Y_{\gamma_{w}\beta}^{im_{u}}Y_{\gamma_{w}\beta}^{m_{u}j}\notin\mathbb{F}
\end{equation} 
which implies that there exists at least one pair in $\{i,j\}\times\{\alpha,\beta\}$, say $(i,\alpha)$ with proper labelling, such that $Y_{\alpha\gamma_{w}}^{im_{u}}$ is radical. This condition forces $h(\mathcal{I}_{\gamma_{w}}^{j})\cdot h(\mathcal{I}_{\beta}^{m_{u}})\neq0$ too: indeed, at $h(\mathcal{I}_{\gamma_{w}}^{j})=0$ we would have $Y_{\alpha\gamma_{w}}^{jm_{u}},Y_{\alpha\gamma_{w}}^{ij}\in\mathbb{F}$, since they derive from observable sets containing $0$, so their product would return $Y_{\alpha\gamma_{w}}^{im_{u}}\in\mathbb{F}$ by (\ref{eq: associativity, u}). A similar argument gives $h(\mathcal{I}_{\beta}^{m_{u}})\neq0$. In this way, we get 
\begin{equation}
h(\mathcal{I}_{\gamma_{w}}^{i_{a}})\cdot h(\mathcal{I}_{\alpha_{b}}^{m_{u}})\neq0,
\quad a,b,u,w\in[2],\,i_{a}\in\{i,j\},\,\alpha_{b}\in\{\alpha,\beta\}.
\label{eq: non-central non-vanishing}
\end{equation}

When $\chi(\mathcal{I}\mid_{\alpha\beta}^{m_{1}m_{2}})$ is radical, we apply twice (\ref{eq: same quadratic extension, only upper or lower}) to obtain 
\begin{equation} 
Y_{\alpha\beta}^{ij}\in\mathbb{F}(\sqrt{D})\setminus \mathbb{F} \Rightarrow Y_{\alpha\beta}^{m_{1}m_{2}}\in\mathbb{F}(\sqrt{D})\setminus\mathbb{F} \Rightarrow Y_{\gamma_{1}\gamma_{2}}^{m_{1}m_{2}}\in\mathbb{F}(\sqrt{D})\setminus\mathbb{F}
\end{equation} 
and a similar implication holds for radical $\chi(\mathcal{I}\mid_{\gamma_{1}\gamma_{2}}^{ij})$. When both $\chi(\mathcal{I}\mid_{\alpha\beta}^{m_{1}m_{2}})$ and $\chi(\mathcal{I}\mid_{\gamma_{1}\gamma_{2}}^{ij})$ contain $0$, (\ref{eq: non-central non-vanishing}) entails $h(\mathcal{I}_{\alpha\beta}^{m_{1}m_{2}})=h(\mathcal{I}_{\gamma_{1}\gamma_{2}}^{ij})=0$; being $Y_{\alpha\beta}^{ij},Y_{\gamma_{1}\gamma_{2}}^{m_{1}m_{2}}\notin\mathbb{F}$, this means that we can find $\overline{i}\in\{i,j\}$ and $\overline{m}\in\{m_{1},m_{2}\}$ such that $Y_{\alpha\beta}^{\overline{i}m_{u}}$ and $Y_{\omega_{1}\omega_{2}}^{\overline{m}x}$ are radical for all $u\in[2]$ and $x\in\{i,j\}$. In particular, both $Y_{\alpha\beta}^{\overline{i}\overline{m}}$ and $Y_{\omega_{1}\omega_{2}}^{\overline{i}\overline{m}}$ are radical: so, we apply (\ref{eq: same quadratic extension, only upper or lower}) three times to get 
\begin{eqnarray}
    Y_{\alpha\beta}^{ij}\in\mathbb{F}(\sqrt{D})\setminus \mathbb{F} & \Rightarrow & 
    Y_{\alpha\beta}^{\overline{i}\overline{m}}\in\mathbb{F}(\sqrt{D})\setminus\mathbb{F} \nonumber \\ 
    & \Rightarrow &  Y_{\omega_{1}\omega_{2}}^{\overline{i}\overline{m}}\in\mathbb{F}(\sqrt{D})\setminus\mathbb{F} \Rightarrow Y_{\omega_{1}\omega_{2}}^{m_{1}m_{2}}\in\mathbb{F}(\sqrt{D})\setminus\mathbb{F}.
\end{eqnarray}
\end{proof}
The previous lemma states that all $B$-terms based on $\mathcal{I}\in\mathfrak{G}(\mathbf{L}(\mathbf{1}))$ are Laurent polynomials in $\mathbf{t}$ that are not perfect squares in $\mathbb{C}(\mathbf{t})$ have the same squarefree part $D$. Now, we extend this result to all bases. 
\begin{prop}
\label{prop: same radical for all starting sets} The quantity $D$
in (\ref{eq: factorization square and squarefree}) is the same for each choice of basis $\mathcal{J}\in\mathfrak{G}(\mathbf{L}(\mathbf{t}))$. 
\end{prop}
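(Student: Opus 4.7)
The plan is to reduce the problem to the case of two bases $\mathcal{I},\mathcal{J}$ related by a single-element exchange $\mathcal{J}=\mathcal{I}_{\alpha}^{i}$: the matroid exchange axiom (\ref{eq: exchange relation}) ensures that the basis-exchange graph of $\mathfrak{G}(\mathbf{L}(\mathbf{t}))$ is connected, so an inductive argument along such a path suffices.

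The core observation I would establish first is the identity $B(\mathcal{J})_{i\beta}^{\alpha j}=B(\mathcal{I})_{\alpha\beta}^{ij}$ whenever both sides are defined. To see this, I would verify by direct substitution that $\chi(\mathcal{J}\mid_{i\beta}^{\alpha j})=\chi(\mathcal{I}\mid_{\alpha\beta}^{ij})$ as multisets, matching the three products of $h$-values element by element (for instance, $\mathcal{J}_{i\beta}^{\alpha j}=\mathcal{I}_{\beta}^{j}$ gives $h(\mathcal{J})\cdot h(\mathcal{J}_{i\beta}^{\alpha j})=h(\mathcal{I}_{\alpha}^{i})\cdot h(\mathcal{I}_{\beta}^{j})$, and similarly for the remaining two pairings). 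The function $B(x,y,z)=(x-y-z)^{2}-4yz$ in (\ref{eq: B-function}) is in fact symmetric in its three arguments, since it expands to $x^{2}+y^{2}+z^{2}-2(xy+yz+xz)$, so it depends only on the multiset, yielding the stated equality of $B$-terms.

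Next, assuming $D_{\mathcal{I}}$ is non-trivial, I would use the quadrilateral decomposition (\ref{eq: quadrilateral decomposition}) to rewrite any radical $Y(\mathcal{I})_{\gamma\delta}^{mn}$ as a product (and inverses) of $Y$-terms of the form $Y(\mathcal{I})_{\alpha\beta}^{ij}$, by inserting $\alpha$ among the lower indices and $i$ among the upper ones. Since a product of elements of $\mathbb{F}$ lies in $\mathbb{F}$, at least one factor $Y(\mathcal{I})_{\alpha\beta}^{ij}$ must be radical, so $B(\mathcal{I})_{\alpha\beta}^{ij}$ is not a perfect square and has squarefree part $D_{\mathcal{I}}$ by Lemma \ref{lem: common squarefree}. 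Using the identity of the previous paragraph, $B(\mathcal{J})_{i\beta}^{\alpha j}$ shares the squarefree part $D_{\mathcal{I}}$, and Lemma \ref{lem: common squarefree} applied at the basis $\mathcal{J}$ delivers $D_{\mathcal{J}}=D_{\mathcal{I}}$. The case in which $D_{\mathcal{I}}$ is trivial (all $Y$-terms at $\mathcal{I}$ lie in $\mathbb{F}$) is handled by symmetry: the same argument applied with the reverse exchange $\mathcal{I}=\mathcal{J}_{i}^{\alpha}$ shows that a radical $Y$-term at $\mathcal{J}$ would transfer to a radical $Y$-term at $\mathcal{I}$, a contradiction, so both $D_{\mathcal{I}}$ and $D_{\mathcal{J}}$ are trivial.

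The main technical obstacle I expect to meet is in the quadrilateral decomposition step, specifically ensuring that the intermediate $Y$-terms used to bring in the indices $\alpha$ and $i$ are observable (so that (\ref{eq: associativity, d}) and (\ref{eq: associativity, u}) can be legitimately applied). Lemma \ref{lem: observable sets with 0 are algebraic} already forces the non-vanishing of a layer of neighbouring $h$-values around any radical $Y$-term, but a careful case analysis based on the position of $\alpha$ and $i$ relative to $\{\gamma,\delta,m,n\}$, together with possibly choosing the radical $Y$-term $Y(\mathcal{I})_{\gamma\delta}^{mn}$ with some freedom granted by Lemma \ref{lem: common squarefree}, should cover the degenerate sub-cases that would otherwise prevent the decomposition.
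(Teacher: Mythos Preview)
Your approach is essentially the same as the paper's: reduce to a single exchange $\mathcal{J}=\mathcal{I}_{\alpha}^{i}$ via a chain of bases, take a radical $Y$-term at $\mathcal{I}$, insert the exchange indices $i$ and $\alpha$ through the quadrilateral decomposition (\ref{eq: quadrilateral decomposition}), and transfer the resulting radical factor to the basis $\mathcal{J}$. Your multiset identity $\chi(\mathcal{J}\mid_{i\beta}^{\alpha j})=\chi(\mathcal{I}\mid_{\alpha\beta}^{ij})$ is a clean way to phrase the transfer; the paper instead invokes the transformation rules (\ref{eq: Grassmann-Plucker translation exchange, vertical})--(\ref{eq: Grassmann-Plucker translation exchange, diagonal}) on the $Y$-term directly, but the content is identical.

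The observability obstacle you flag is not actually an obstacle, and the paper dispatches it in one line. If you insert $i$ and $\alpha$ \emph{simultaneously} via the full quadrilateral decomposition $Y(\mathcal{I})_{\gamma\delta}^{mn}=-Y_{\gamma\alpha}^{mi}\,Y_{\gamma\alpha}^{in}\,Y_{\alpha\delta}^{mi}\,Y_{\alpha\delta}^{in}$, then each factor's $\chi$-set contains a product of the form $h(\mathcal{I}_{\gamma}^{m})\cdot h(\mathcal{I}_{\alpha}^{i})$ (or the analogous one with $\delta$ or $n$): the first factor is nonzero because the original term is radical, hence satisfies (\ref{eq: all 3 terms non-vanishing}), and the second because $\mathcal{I}_{\alpha}^{i}=\mathcal{J}\in\mathfrak{G}(\mathbf{L}(\mathbf{1}))$ by construction of the exchange chain. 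So no case analysis is needed; the degenerate sub-cases where $i\in\{m,n\}$ or $\alpha\in\{\gamma,\delta\}$ just collapse some factors to $-1$ and the argument goes through unchanged.
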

\begin{proof}
Here, we make explicit the dependence of $Y(\mathcal{I})_{\alpha\beta}^{ij}$
on the basis $\mathcal{I}$. For any $\mathcal{J}\in\mathfrak{G}(\mathbf{L}(\mathbf{t}))$, we denote by $D(\mathcal{J})$ the squarefree part of any $B$-term associated with a radical $Y$-term.
In particular, $D(\mathcal{J})\in\mathbb{C}$ indicates that there is no radical $Y$-term with basis $\mathcal{J}$. Let us label $\mathcal{I}\setminus\mathcal{J}=:\{m_{1},\dots,m_{r}\}$, $r\leq k$. As remarked in \cite[Lem.6]{Angelelli2019}, the exchange property (\ref{eq: exchange relation}) for the matroid $\mathfrak{G}(\mathbf{L}(\mathbf{1}))$ implies that there exists a labelling $\mathcal{J}\setminus\mathcal{I}=:\{\delta_{1},\dots,\delta_{r}\}$ such that 
 \begin{equation}
\mathcal{L}_{u}:=\mathcal{I}\backslash\{m_{1},\dots,m_{u}\}\cup\{\delta_{1},\dots,\delta_{u}\}\in\mathfrak{G}(\mathbf{L}(\mathbf{1})),\quad u\in[r].
\label{eq: chain of non-vanishing}
 \end{equation}
Note that $\delta_{u}\neq\delta_{t}$ for all $t<u$, since $\delta_{t}\in\mathcal{L}_{u-1}$ and $\delta_{u}\notin\mathcal{L}_{u-1}$. We set $\mathcal{L}_{0}:=\mathcal{I}$ to unify the notation. 

Now consider any $p\in[r]$: the thesis holds for $\mathcal{L}_{p-1}$ and $\mathcal{L}_{p}$ when $D(\mathcal{L}_{p-1})$, $D(\mathcal{L}_{p})\in\mathbb{C}$, so let us assume the contrary, say $D(\mathcal{L}_{p-1})\notin\mathbb{C}$ with an appropriate labelling of $\{\mathcal{I},\mathcal{J}\}$. This means that there exists a radical term $Y(\mathcal{L}_{p-1})_{\alpha\beta}^{ij}$ with $i,j\in\mathcal{L}_{p-1}$ and $\alpha,\beta\in\mathcal{L}_{o-1}^{\mathtt{C}}$. We invoke the decompositions (\ref{eq: quadrilateral decomposition}) 
\begin{equation} 
Y(\mathcal{L}_{p-1})_{\alpha\beta}^{ij} = - Y(\mathcal{L}_{p-1})_{\alpha\delta_{p}}^{im_{p}} \cdot Y(\mathcal{L}_{p-1})_{\alpha\delta_{p}}^{m_{p}j} \cdot  Y(\mathcal{L}_{p-1})_{\delta_{p}\beta}^{im_{p}} \cdot Y(\mathcal{L}_{p-1})_{\delta_{p}\beta}^{m_{p}j}. 
\label{eq: propagation algebraic extension through bases}
\end{equation} 
Since $\chi(\mathcal{L}_{p-1}\mid_{\alpha\beta}^{ij})$ satisfies (\ref{eq: all 3 terms non-vanishing}) by assumption and $h((\mathcal{L}_{p-1})_{\delta_{p}}^{m_{p}})\neq 0$ by construction (\ref{eq: chain of non-vanishing}), all factors on the right-hand side of (\ref{eq: propagation algebraic extension through bases}) derive from observable sets. By Lemma \ref{lem: common squarefree}, they lie in the same quadratic extension of $\mathbb{F}$; furthermore, by $Y(\mathcal{L}_{p-1})_{\alpha\beta}^{ij}\notin\mathbb{F}$, at least one of these $Y$-terms does not lie in $\mathbb{F}$. These two properties are preserved under the transformation rules (\ref{eq: Grassmann-Plucker translation exchange, vertical})-(\ref{eq: Grassmann-Plucker translation exchange, diagonal}), which let us move from $\mathcal{L}_{p-1}$ to $\mathcal{L}_{p}$, finding $D(\mathcal{L}_{p-1})=D(\mathcal{L}_{p})$.
Concatenating these equalities for all $p\in[r]$, we get $D(\mathcal{I})=D(\mathcal{J})$. 
\end{proof}
A consequence of the previous proposition is that the set $\Psi(D)$ of monomials appearing in $D$ is uniquely characterised by any radical $Y$-term, modulo multiplication by invertible elements in $\mathbb{C}(\mathbf{t})$, independently of the choice of $\mathcal{I}\in\mathfrak{G}(\mathbf{L}(\mathbf{t}))$. 

\section{\label{sec: All three-terms non-vanishing} Reduction of set functions: quadratic case}
As mentioned in the Introduction, in this paper we concentrate on the following:
\begin{assumption}
\label{claim: generic matrices} All the maximal minors of $\mathbf{R}(\mathbf{1})$ are non-vanishing, and there is $\mathcal{I}\in\mathfrak{G}(\mathbf{L}(\mathbf{1}))$, $\alpha_{1},\alpha_{2}\in\mathcal{I}^{\mathtt{C}}$ such that (\ref{eq: two generic columns condition}) is verified, equivalently,  $\chi(\mathcal{I}\mid_{\alpha_{1}\alpha_{2}}^{m_{1}m_{2}})$ satisfies (\ref{eq: all 3 terms non-vanishing}) for all distinct $m_{1},m_{2}\in\mathcal{I}$. 
\end{assumption} 
Before deepening the investigation of algebraic conditions started in the previous section, we provide another example (in addition to Example \ref{exa: principal minor counterexample}) to show that a constraint on $\mathbf{G}(\mathbf{L}(\mathbf{1}))$ is required to guarantee combinatorial reductions. 
\begin{example}
\label{exa: non-projectively extensional, no bound} Consider $\mathbf{a}\in\mathbb{C}^{k}\setminus\{\mathbf{0}_{k}\}$ and a $(k\times s)$-dimensional matrix $\mathbf{e}$ of units of $\mathbb{C}(\mathbf{t})$ such that each minor $\det(\mathbf{e}_{\mathcal{A};\mathcal{B}})$, where $\mathcal{A}\in\wp_{z}[k]$ and $\mathcal{B}\in\wp_{z}[s]$ for some $z\leq \min\{k,s\}$, does not vanish for a generic choice of $\mathbf{t}$. Then, we introduce
\begin{equation*}
\mathbf{L_{s}}(\mathbf{t}) :=  \left(\idd_{k}\mid\mathbf{a}\cdot \mathbf{1}_{s}^{\mathtt{T}}\right)\in\mathbb{C}^{k\times(k+s)},
\quad
\mathbf{R_{s}}(\mathbf{t}) := \left(\idd_{k}\mid\mathbf{e}(\mathbf{t})\right)^{\mathtt{T}}\in\mathbb{C}(\mathbf{t})^{(k+s)\times k}.
\end{equation*}
In other words, $\mathbf{L_{s}}$ is constructed by appending $s$ replicas of the column $\mathbf{a}$ to $\idd_{k}$. Each non-vanishing minor of $\mathbf{L_{s}}(\mathbf{t})$ has the form $[k]_{\alpha}^{i}$ for some $(i,\alpha)\in[k]\times[k+1;k+s]$, and the associated term $\Delta_{\mathbf{L_{s}}(\mathbf{t})}([k]_{\alpha}^{i})\cdot\Delta_{\mathbf{R_{s}}(\mathbf{t})}([k]_{\alpha}^{i})$ is proportional to $e_{i\alpha}$ and hence invertible in $\mathbb{C}(\mathbf{t})$. However, the condition (\ref{eq: consistency and curvature}) is not verified in general, in which case the reduction (\ref{eq: set-to-element integrability}) does not occur. 
\end{example}
\subsection{\label{subsec: allowed configuration for radical terms} Allowed configurations and squarefree decomposition}
\begin{prop}
\label{prop: factor for square and GCD} 
If $Q_{\alpha\beta}^{ij}$ in (\ref{eq: factorization square and squarefree}) is not invertible in $\mathbb{C}(\mathbf{t})$, then it is a binomial. 
\end{prop}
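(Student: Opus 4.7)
The plan is to factor $B_{\alpha\beta}^{ij}$ in the quadratic extension $\mathbb{C}(\mathbf{s})$ defined by $s_c^2 = t_c$ for $c\in[d]$, reduce to a univariate problem via a Newton-polytope argument, and analyse the resulting double-root structure directly. Writing $M_1 := h(\mathcal{I})\cdot h(\mathcal{I}_{\alpha\beta}^{ij})$, $M_2 := h(\mathcal{I}_\alpha^i)\cdot h(\mathcal{I}_\beta^j)$ and $M_3 := h(\mathcal{I}_\beta^i)\cdot h(\mathcal{I}_\alpha^j)$ for the three monomials in $\chi(\mathcal{I}\mid_{\alpha\beta}^{ij})$, the identity $(x-y-z)^2-4yz = \prod_{\epsilon_1,\epsilon_2\in\{\pm 1\}}(\sqrt{x}+\epsilon_1\sqrt{y}+\epsilon_2\sqrt{z})$ yields $B_{\alpha\beta}^{ij} = L_1L_2L_3L_4$ with $L_\epsilon := \sqrt{M_1}+\epsilon_1\sqrt{M_2}+\epsilon_2\sqrt{M_3}\in\mathbb{C}(\mathbf{s})$. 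For distinct $\epsilon\neq\epsilon'$, one of $L_\epsilon\pm L_{\epsilon'}$ equals $\pm 2\sqrt{M_i}$ for some $i$, a monomial unit; hence the four $L_\epsilon$'s are pairwise coprime in $\mathbb{C}(\mathbf{s})$, and every irreducible factor of $Q_{\alpha\beta}^{ij}$ (viewed inside $\mathbb{C}(\mathbf{s})$) divides a unique $L_\epsilon$.

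A Newton-polytope argument then forces $\Psi(M_1),\Psi(M_2),\Psi(M_3)$ to be collinear in $\mathbb{Z}^d$. If they were affinely independent, each $L_\epsilon$ would have a non-degenerate triangular Newton polytope, and the Minkowski identity $\mathrm{Newt}(L_\epsilon) = 2\cdot\mathrm{Newt}(P)+\mathrm{Newt}(L_\epsilon/P^2)$ applied to a non-unit square factor $P^2\mid L_\epsilon$ would force $L_\epsilon$ to be a perfect square. But the square of a binomial has a $1$-dimensional Newton polytope, and the square of a trinomial $(a+b+c)^2$ has non-vanishing cross terms $2ab,2ac,2bc$ in characteristic zero that cannot be absorbed into the three-term support of $L_\epsilon$. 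So each contribution to $Q_{\alpha\beta}^{ij}$ would be a unit, contradicting the hypothesis. By Remark \ref{rem: invertible change of indeterminates} a unimodular change of indeterminates then reduces $B_{\alpha\beta}^{ij}$ to a Laurent polynomial in a single indeterminate $t$, with each $L_\epsilon$ becoming a univariate trinomial $A+B's^p+Cs^q$ where $A=\sqrt{m_1}$, $B'=\epsilon_1\sqrt{m_2}$, $C=\epsilon_2\sqrt{m_3}$ and $p,q$ are the positive exponent gaps.

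The double-root system $L_\epsilon(s_0)=L_\epsilon'(s_0)=0$ yields $s_0^p=-qA/((q-p)B')$ and $s_0^q=pA/((q-p)C)$, whose consistency is a single algebraic identity depending on the sign pattern only through $\epsilon_1^{q'}\epsilon_2^{-p'}$ (where $p=gp'$, $q=gq'$, $g:=\gcd(p,q)$, $\gcd(p',q')=1$); since this sign-product lies in $\{\pm 1\}$, it selects exactly two sign patterns - a ``sibling pair'' differing by a single or double sign flip according to the parities of $p',q'$ - and the computation $L_\epsilon''(s_0)=pqA/s_0^2\neq 0$ shows each double root has multiplicity exactly $2$. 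The repeated part of $L_\epsilon$ is therefore the binomial $R_\epsilon := s^g-u_0$ with $u_0:=s_0^g$, and a short parity check on the Bezout coefficients satisfying $ap'+bq'=1$ shows that the sibling flip sends $u_0\mapsto -u_0$. Pairwise coprimality of the $L_\epsilon$'s gives $(R_\epsilon R_{\epsilon'})^2\mid L_\epsilon L_{\epsilon'}\mid B_{\alpha\beta}^{ij}$, and since only the sibling pair contributes repeated factors,
\begin{equation*}
Q_{\alpha\beta}^{ij} = R_\epsilon\cdot R_{\epsilon'} = (s^g-u_0)(s^g+u_0) = s^{2g}-u_0^2 = t^g-u_0^2 \,\in\,\mathbb{C}(\mathbf{t}),
\end{equation*}
a binomial, as claimed. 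The main obstacle is the parity bookkeeping needed to verify that the sibling flip reverses $u_0$ in every arithmetic sub-case $(p',q')\bmod 2$, together with the verification that no third $L_\epsilon$ outside the sibling pair contributes a non-unit factor to $Q_{\alpha\beta}^{ij}$.
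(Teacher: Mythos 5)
Your proof takes a genuinely different and valid route from the paper's. The paper normalises $B_{\alpha\beta}^{ij}$ by the monomial $G_{\alpha\beta}^{ij}$, fixes a variable $t_p$ on which $\hat Q_{\alpha\beta}^{ij}$ depends, and runs an explicit Euclidean-type reduction on $\hat B_{\alpha\beta}^{ij}$ and $\partial_{t_p}\hat B_{\alpha\beta}^{ij}$ until it isolates the binomials $w_2$ and $w_3$ of (\ref{eq: d2 roots of units})--(\ref{eq: d1 roots of units}); the resonance condition (\ref{eq: condition monomials resonance in B}) then determines when these share a factor and forces the explicit binomial (\ref{eq: quadratic factor in resonance}). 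You instead pass to the quadratic cover $\mathbb{C}(\mathbf{s})$ with $s_c^2=t_c$ and use $(x-y-z)^2-4yz=\prod_{\epsilon}(\sqrt{x}+\epsilon_1\sqrt{y}+\epsilon_2\sqrt{z})$, so that $B_{\alpha\beta}^{ij}=L_1L_2L_3L_4$ with the $L_\epsilon$ pairwise coprime (your observation that some $L_\epsilon\pm L_{\epsilon'}$ is always a monomial $\pm2\sqrt{M_i}$ is a clean way to see this). Any repeated factor of $B_{\alpha\beta}^{ij}$ therefore sits inside a single $L_\epsilon$; collinearity of the exponents then reduces everything to a single indeterminate via Remark \ref{rem: invertible change of indeterminates}, and the double-root analysis of a univariate trinomial pins the repeated part down to a binomial $s^g-u_0$. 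Your computation $L_\epsilon''(s_0)=pqA/s_0^2\neq0$ correctly caps the multiplicity at two, and the bookkeeping you flag as the ``main obstacle'' does in fact close: since $\gcd(p',q')=1$, at least one of $p',q'$ is odd, so $\epsilon_1^{q'}\epsilon_2^{-p'}$ is non-constant on the four sign patterns and the (nonzero) consistency identity can hold for exactly one of its two values, giving precisely one sibling pair; and from $a(q'-p')+bp'=1$ the parities of $a$, $a+b$, $b$ are forced in the three cases for $(p',q')\bmod2$, so the sibling flip always sends $u_0\mapsto-u_0$ and $R_\epsilon R_{\epsilon'}=s^{2g}-u_0^2$ lands in $\mathbb{C}(\mathbf{t})$ as a binomial.

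The one soft spot is the Newton-polytope step. You assert that $\mathrm{Newt}(L_\epsilon)=2\,\mathrm{Newt}(P)+\mathrm{Newt}(L_\epsilon/P^2)$ for a non-unit square factor $P$ would force $L_\epsilon$ to be a perfect square, and then rule out perfect-square trinomials; but this only dismisses $L_\epsilon=P^2$ up to a unit, not $L_\epsilon=P^2R$ with $P$ and $R$ both nonconstant, since the Minkowski summands of a triangle are homothetic copies of arbitrary scale and $\mathrm{Newt}(R)$ need not be a point. The conclusion you need (a Laurent trinomial whose three exponents are affinely independent in $\mathbb{Z}^d$ is squarefree) is nevertheless true and follows more directly: after a unimodular change of variables and division by a monomial one can write $L_\epsilon=c_0+c_1X^a+c_2X^bY^e$ with $e\geq1$; then $\partial_Y L_\epsilon=c_2eX^bY^{e-1}$ is a monomial unit, so any repeated factor divides a unit. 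Replacing the Minkowski sentence with this remark closes the gap. On balance, the paper's explicit reduction also produces the resonance identity (\ref{eq: condition monomials resonance in B}) and the coefficient form (\ref{eq: quadratic factor in resonance}), both reused downstream (Corollary \ref{cor: class-2 D, variables and independence}, Remark \ref{rem: characterization of the binary entropy function}, Proposition \ref{prop: class-2, recover hat-B from D}); your approach is more conceptual and makes the $\gcd$-exponent of the binomial immediate, but recovering the explicit coefficients would still require unwinding the double-root formulas.
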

\begin{proof}
Taking into account the definition (\ref{eq: greatest common divisor}), we introduce 
 \begin{equation}
\hat{A}_{\alpha\beta}^{ij}:=\frac{A_{\alpha\beta}^{ij}}{G_{\alpha\beta}^{ij}},\quad\hat{B}_{\alpha\beta}^{ij}:=\frac{B_{\alpha\beta}^{ij}}{(G_{\alpha\beta}^{ij})^{2}},\quad\hat{Q}_{\alpha\beta}^{ij}:=\frac{Q_{\alpha\beta}^{ij}}{G_{\alpha\beta}^{ij}}.
\label{eq: rationalized A, B and Q w.r.t. GCD}
 \end{equation}
Whenever $D\in\mathbb{C}$, we find $(\hat{Q}_{\alpha\beta}^{ij})^{2}=\hat{B}_{\alpha\beta}^{ij}$, which means $Q_{\alpha\beta}^{ij},B_{\alpha\beta}^{ij}\in\mathbb{C}$ by Lemma \ref{lem: case B^(ij)_(ab) square}, while $Q_{\alpha\beta}^{ij}$ is invertible in $\mathbb{C}(\mathbf{t})$ at $D=\hat{B}_{\alpha\beta}^{ij}$. Focusing on the remaining cases, 
%
we take $p\in[d]$ satisfying $\partial_{t_{p}}\hat{Q}_{\alpha\beta}^{ij}\neq0$, where $\partial_{t_{p}}$ denotes the partial derivative with respect
to $t_{p}$, so $\hat{Q}_{\alpha\beta}^{ij}$ is a common factor for $\hat{B}_{\alpha\beta}^{ij}$ and $\partial_{t_{p}}\hat{B}_{\alpha\beta}^{ij}$. By (\ref{eq: rationalized A, B and Q w.r.t. GCD}) there is a term in $\hat{\chi}(\mathcal{I}\mid_{\alpha\beta}^{ij})=:\{E_{1},E_{2},E_{3}\}$ that is constant with respect to $t_{p}$, therefore we can write
\begin{eqnarray}
\hat{B}_{\alpha\beta}^{ij}
& = & E_{1}^{2}+E_{2}^{2}+E_{3}^{2}-2E_{1}E_{2}-2E_{1}E_{3}-2E_{2}E_{3}
\label{eq: univariate B, E}\\
& = & c_{1}^{2}t_{p}^{2d_{1}}+c_{2}^{2}t_{p}^{2d_{2}}+c_{3}^{2}-2c_{1}c_{2}t_{p}^{d_{1}+d_{2}}-2c_{1}c_{3}t_{p}^{d_{1}}-2c_{2}c_{3}t_{p}^{d_{2}}
\label{eq: univariate B}
\end{eqnarray}
with $c_{u}\in\mathbb{C}[\mathbf{t}_{\hat{p}}]$, $E_{u}=c_{u}t_{p}^{d_{u}}$, $u\in[3]$, and $d_{1}\geq d_{2}\geq d_{3}=0$. Each common factor of $\hat{B}_{\alpha\beta}^{ij}$ and $\partial_{t_{p}}\hat{B}_{\alpha\beta}^{ij}$ also divides the combination 
 \begin{equation}
\frac{t_{p}}{d_{1}}\cdot \partial_{t_{p}}\hat{B}_{\alpha\beta}^{ij}-\hat{B}_{\alpha\beta}^{ij} =  \left(c_{1}t_{p}^{d_{1}}-c_{2}t_{p}^{d_{2}}+c_{3}\right)\cdot \left(c_{1}t_{p}^{d_{1}}+c_{2}(1-2d_{1}^{-1}d_{2}) t_{p}^{d_{2}}-c_{3}\right).
\label{eq: factorizable combination}
 \end{equation}
However, we also find 
\begin{eqnarray}
\hat{B}_{\alpha\beta}^{ij}-(c_{1}t_{p}^{d_{1}}-c_{2}t_{p}^{d_{2}}+c_{3})^{2} & = & -4c_{1}c_{3}t_{p}^{d_{1}},
\label{eq: combination to power d1}\\
\hat{B}_{\alpha\beta}^{ij}-(c_{1}t_{p}^{d_{1}}-c_{2}t_{p}^{d_{2}}-c_{3})^{2} & = & -4c_{2}c_{3}t_{p}^{d_{2}}.
\label{eq: combination to power d2}
\end{eqnarray}
From (\ref{eq: combination to power d1}) and $c_3\neq 0$ we see that $\hat{B}_{\alpha\beta}^{ij}$
and $c_{1}t_{p}^{d_{1}}-c_{2}t_{p}^{d_{2}}+c_{3}$ are coprime in $\mathbb{C}[\mathbf{t}]$, so from (\ref{eq: factorizable combination}) any common factor between
$\hat{B}_{\alpha\beta}^{ij}$ and $\partial_{t_{p}}\hat{B}_{\alpha\beta}^{ij}$ also divides 
\begin{equation*}
w_{1}:=c_{1}t_{p}^{d_{1}}+c_{2}(1-2d_{1}^{-1}d_{2})\cdot t_{p}^{d_{2}}-c_{3}.
\end{equation*}
If $d_{1}=d_{2}$ we find the reductions 
\begin{eqnarray}
w_{1,d_{1}=d_{2}} & := & (c_{1}-c_{2})t_{p}^{d_{1}}-c_{3},
\label{eq: w_1, case d_1=d_2}\\
(B_{\alpha\beta}^{ij})_{d_{1}=d_{2}} & := & (c_{1}-c_{2})^{2}t_{p}^{2d_{1}}+c_{3}^{2}-2(c_{1}+c_{2})c_{3}t_{p}^{d_{1}}.
\label{eq: B, case d_1=d_2}
\end{eqnarray}
Taking into account their combination 
 \begin{equation}
    - (B_{\alpha\beta}^{ij})_{d_{1}=d_{2}} + w_{1,d_{1}=d_{2}}^2 = 4 c_{2} c_{3} \cdot t_{p}^{d_{1}}
\label{eq: linear combination of B and w_1, case d_1=d_2}
 \end{equation}
we find that $w_{1,d_{1}=d_{2}}$ and $(B_{\alpha\beta}^{ij})_{d_{1}=d_{2}}$ are coprime. Thus, we get $d_{1}\neq d_{2}$ and look at the combination 
\begin{eqnarray}
&  & -t_{p}\cdot \partial_{t_{p}}\hat{B}_{\alpha\beta}^{ij}-\frac{2c_{3}d_{2}-2(d_{1}-d_{2})\cdot(c_{1}t_{p}^{d_{1}}-c_{2}t_{p}^{d_{2}}-c_{3})}{d_{1}-d_{2}}\cdot d_{1}\cdot w_{1}\nonumber \\
& = & 2(d_{1}-d_{2})\left(c_{1}t_{p}^{d_{1}}-c_{2}t_{p}^{d_{2}}-c_{3}\right)\left(c_{2}\cdot t_{p}^{d_{2}}-c_{3}d_{1}^{2}(d_{1}-d_{2})^{-2}\right).\label{eq: third common}
\end{eqnarray}
Noting that $c_{1}t_{p}^{d_{1}}-c_{2}t_{p}^{d_{2}}-c_{3}$ and $\hat{B}_{\alpha\beta}^{ij}$ are coprime by (\ref{eq: combination to power d2}), any common factor has to divide 
 \begin{equation}
w_{2}:=c_{2}\cdot t_{p}^{d_{2}}-c_{3}d_{1}^{2}(d_{1}-d_{2})^{-2}
\label{eq: d2 roots of units}
 \end{equation}
which also entails $d_{2}>0$ so as to have $\partial_{t_{p}}\hat{Q}_{\alpha\beta}^{ij}\neq0$. Finally, each common divisor of $\hat{B}_{\alpha\beta}^{ij}$ and $\partial_{t_{p}}\hat{B}_{\alpha\beta}^{ij}$ also divides
\begin{eqnarray}
w_{3} & := & w_{1}-(1-2d_{1}^{-1}d_{2})\cdot w_{2}\nonumber \\
& = & c_{1}t_{p}^{d_{1}}-(d_{1}-d_{2})^{-2}c_{3}d_{2}^{2}.
\label{eq: d1 roots of units}
\end{eqnarray}
The two factors (\ref{eq: d2 roots of units}) and (\ref{eq: d1 roots of units}) have a non-trivial common factor only if the condition 
 \begin{equation}
c_{1}^{-d_{2}}(d_{1}-d_{2})^{-2d_{2}}c_{3}^{d_{2}}d_{2}^{2d_{2}}=c_{2}^{-d_{1}}c_{3}^{d_{1}}d_{1}^{2d_{1}}(d_{1}-d_{2})^{-2d_{1}}\label{eq: condition monomials resonance in B, 0}
 \end{equation}
is satisfied, equivalently, only if 
 \begin{equation}
\frac{E_{2}^{d_{1}}}{d_{1}^{2d_{1}}}=\frac{E_{3}^{d_{1}-d_{2}}}{(d_{1}-d_{2})^{2(d_{1}-d_{2})}}\cdot\frac{E_{1}^{d_{2}}}{d_{2}^{2d_{2}}},\quad d_{1}>d_{2}>0.\label{eq: condition monomials resonance in B}
 \end{equation}
If this relation holds, then we can introduce  
 \begin{equation}
\varrho:=d_{2}/d_{1},\quad\varrho_{0}:=\gcd(d_{1},d_{2})/d_{1}
\label{eq: defining exponents for resonance}
 \end{equation}
and identify a unit $\mathrm{r}\in\mathbb{C}(\mathbf{t})$ such that 
\begin{equation}
\mathrm{r}^{1/\varrho_{0}}:=\left(\varrho^{-1}-1\right)^{2}E_{3}^{-1}E_{1}. 
\label{eq: homogeneous with respect to clustering}
 \end{equation}
In more detail, from (\ref{eq: condition monomials resonance in B}) we can write $\Psi(E_{u}\cdot E_{3}^{-1})=:\gcd(\Psi(E_{u}\cdot E_{3}^{-1}))\cdot\mathbf{f}$ where $\mathbf{f}$ is a primitive vector independent of $u$, which can be completed to a unimodular matrix $\mathbf{V}\in\mathbb{Z}^{d\times d}$ \cite{Newman1985}. Following Remark \ref{rem: invertible change of indeterminates}, we get an associated ring isomorphism 
that maps $w_{2}$ and $w_{3}$ into univariate binomials in $\mathbb{C}(s_{1})$. Then, 
any non-trivial common divisor of $w_{2}$ and $w_{3}$ corresponds to a common factor for the images of these two binomials in $\mathbb{C}(s_{1})$, so it divides $\mathrm{r}-1$ where $\mathrm{r}$ satisfies (\ref{eq: homogeneous with respect to clustering}). Using (\ref{eq: homogeneous with respect to clustering}) and (\ref{eq: condition monomials resonance in B}) to write $\hat{B}_{\alpha\beta}^{ij}$ in terms of $\mathrm{r}$ up to units of $\mathbb{C}(\mathbf{t})$, it is easily checked that it has a double root at $\mathrm{r}=1$. Thus, we get 
 \begin{equation}
\hat{Q}_{\alpha\beta}^{ij}=E_{1}^{\varrho_{0}}-\left(\varrho^{-1}-1\right)^{-2\cdot\varrho_{0}}E_{3}^{\varrho_{0}}
\label{eq: quadratic factor in resonance}
 \end{equation}
where the complex phases of the two summands are determined by $\mathrm{r}$ up to a common invertible factor.  
\end{proof} 
\begin{rem}
\label{rem: homogeneous coordinate}
Situations where $\hat{Q}_{\alpha\beta}^{ij}$ is not invertible in $\mathbb{C}(\mathbf{t})$ are associated with two disjoint tuples of variables $\mathbf{t}_{1},\mathbf{t}_{3}$ extracted from $\mathbf{t}$, which satisfy $E_{u}=c_{u}\mathbf{t}_{u}^{\mathbf{e}_{u}/\varrho_{0}}\in\hat{\chi}(\mathcal{I}\mid_{\alpha\beta}^{ij})$ with $c_{u}\in\mathbb{C}\setminus\{0\}$ and 
$\mathbf{e}_{u}\in\mathbb{N}^{\#\mathbf{t}_{u}}$ for both $u\in\{1,3\}$. The label switch $\mathbf{t}_{1}\leftrightarrows\mathbf{t}_{3}$ induces $\varrho\leftrightarrows1-\varrho$ consistently with (\ref{eq: condition monomials resonance in B}) and preserves the factors of $\hat{Q}_{\alpha\beta}^{ij}$, as it only contributes as a constant factor $(\varrho^{-1}-1)^{2\cdot\varrho_{0}}$ multiplying
$\hat{Q}_{\alpha\beta}^{ij}$ in (\ref{eq: quadratic factor in resonance}). In terms of the univariate notation (\ref{eq: homogeneous with respect to clustering}), this exchange leads to the change of variable $\mathrm{r}\mapsto\mathrm{r}^{-1}$.
\end{rem}
The previous proposition leads us to introduce the following notation. 
\begin{defn}
\label{def: coherent and balanced sets}
Given $\mathbf{f}_{1},\mathbf{f}_{3}\in\mathbb{Z}^{d}$, 
we say that $\mathbf{M}$ is  \emph{$(\mathbf{f}_{1},\mathbf{f}_{3})$-homogeneous} if, 
for all $\mathbf{x}\in\mathbf{M}$, $\Psi(\mathbf{x}^{-1}\cdot \mathbf{M})$ lies in a unidimensional submodule of the span of $\mathbf{f}_{1}$ and $\mathbf{f}_{3}$ in $\mathbb{Z}^{d}$ (therefore, $\Psi(\mathbf{M})$ lies in a unidimensional affine submodule of $\mathbb{Z}^{d}$). 
\end{defn}
The following corollary guarantees that even $D$ can be expressed as a polynomial in the variable $\mathrm{r}$ satisfying (\ref{eq: homogeneous with respect to clustering}) when 
$\hat{Q}_{\alpha\beta}^{ij}$ is not invertible. 
\begin{cor}
\label{cor: class-2 D, variables and independence} When $Q_{\alpha\beta}^{ij}$ is not invertible in $\mathbb{C}(\mathbf{t})$, $\Psi(D)$ is $(\mathbf{e}_{1},\mathbf{e}_{3})$-homogeneous where $\{\mathbf{e}_{1},\mathbf{e}_{3}\}$ $=\Psi(\hat{Q}_{\alpha\beta}^{ij})$. 
\end{cor}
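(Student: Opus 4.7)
The plan is to extract the corollary directly from the structural information already obtained in the proof of Proposition \ref{prop: factor for square and GCD}. When $Q_{\alpha\beta}^{ij}$ is non-invertible, the resonance condition (\ref{eq: condition monomials resonance in B}) forces the exponent vectors $\Psi(E_1),\Psi(E_2),\Psi(E_3)$ of the three monomials in $\hat{\chi}(\mathcal{I}\mid_{\alpha\beta}^{ij})$ to satisfy $\Psi(E_2)=(1-\varrho)\Psi(E_3)+\varrho\Psi(E_1)$, placing them on a common affine line in $\mathbb{Q}^{d}$. Let $\mathbf{f}\in\mathbb{Z}^{d}$ be the primitive integer vector in the direction of $\Psi(E_1)-\Psi(E_3)$; then $\mathbf{e}_{1}-\mathbf{e}_{3}=\varrho_{0}(\Psi(E_{1})-\Psi(E_{3}))$ is an integer multiple of $\mathbf{f}$, hence lies in $\mathrm{span}\{\mathbf{e}_{1},\mathbf{e}_{3}\}$.

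I would then apply Remark \ref{rem: invertible change of indeterminates} using a unimodular matrix $\mathbf{V}\in\mathbb{Z}^{d\times d}$ whose first column is $\mathbf{f}$, passing to the isomorphic ring $\mathbb{C}(\mathbf{s})$. In these coordinates every ratio $E_{u}/E_{3}$ is a pure power of $s_{1}$, so that both $\hat{B}_{\alpha\beta}^{ij}$ and $\hat{Q}_{\alpha\beta}^{ij}$ split, in $\mathbb{C}(\mathbf{s})$, as the product of an overall monomial in $\mathbf{s}_{\widehat{1}}$ (inherited from $E_{3}^{2}$ and $E_{3}^{\varrho_{0}}$ respectively) times a univariate Laurent polynomial in $s_{1}$. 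Since the factorisation $\hat{B}_{\alpha\beta}^{ij}=(\hat{Q}_{\alpha\beta}^{ij})^{2}\cdot D$ is preserved by the ring isomorphism and $D\in\mathbb{C}[\mathbf{t},\mathbf{t}^{-1}]$ is a Laurent polynomial, taking the quotient forces $D$, in the $\mathbf{s}$-coordinates, to inherit the same product structure: a fixed Laurent monomial in $\mathbf{s}_{\widehat{1}}$ times a univariate Laurent polynomial in $s_{1}$.

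Consequently $\Psi(D_{\mathbf{s}})$ lies in an affine line of $\mathbb{Z}^{d}$ along the first coordinate axis, and undoing the change of variables, $\Psi(D)$ lies in an affine line in $\mathbb{Z}^{d}$ along $\mathbf{V}\cdot(1,0,\dots,0)^{\mathtt{T}}=\mathbf{f}$. Since $\mathbf{f}$ is parallel to $\mathbf{e}_{1}-\mathbf{e}_{3}\in\mathrm{span}\{\mathbf{e}_{1},\mathbf{e}_{3}\}$, Definition \ref{def: coherent and balanced sets} is satisfied, establishing the $(\mathbf{e}_{1},\mathbf{e}_{3})$-homogeneity of $\Psi(D)$. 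The delicate step is verifying that the shared non-univariate factors of $\hat{B}_{\alpha\beta}^{ij}$ and $(\hat{Q}_{\alpha\beta}^{ij})^{2}$ (which involve possibly fractional powers of $E_{3}$ when $\varrho_{0}\notin\mathbb{Z}$) combine so that $D$ remains an honest Laurent polynomial with integer exponents; but this is forced by the a priori Laurent-polynomiality of $D$, and any such overall shift in $\mathbf{s}_{\widehat{1}}$ does not alter the direction of the affine line supporting $\Psi(D)$.
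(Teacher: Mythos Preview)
Your argument is correct and reaches the same conclusion as the paper, but the packaging differs. The paper argues combinatorially with a monomial order: if $\mathrm{Supp}(D)$ contained a monomial off the affine line carrying $\mathrm{Supp}(\hat{Q}_{\alpha\beta}^{ij})$, then multiplying the $\preceq$-minimum such offender by the $\preceq$-minimum of $\mathrm{Supp}(\hat{Q}_{\alpha\beta}^{ij})$ would produce a monomial of $\hat{B}_{\alpha\beta}^{ij}$ off its own line, contradicting the $(\mathbf{e}_{1},\mathbf{e}_{3})$-homogeneity of $\hat{B}_{\alpha\beta}^{ij}$ already established. You instead invoke Remark~\ref{rem: invertible change of indeterminates} to pass to coordinates $\mathbf{s}$ in which $\hat{B}_{\alpha\beta}^{ij}$ and $(\hat{Q}_{\alpha\beta}^{ij})^{2}$ are each a monomial in $\mathbf{s}_{\widehat{1}}$ times a univariate Laurent polynomial in $s_{1}$; grouping $D$ by its $\mathbf{s}_{\widehat{1}}$-part and using $P_{Q}\neq 0$ then forces $D$ to have the same shape. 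The two arguments are equivalent: your coordinate change is precisely what makes the paper's monomial-order step transparent, while the paper's version avoids appealing to the unimodular completion of $\mathbf{f}$. Your remark about ``fractional powers of $E_{3}$'' is slightly off---$\varrho_{0}\in(0,\tfrac{1}{2}]$ is always fractional, and what rescues integrality is that (\ref{eq: quadratic factor in resonance}) holds only up to a unit, so $\Psi(\hat{Q}_{\alpha\beta}^{ij})$ is an integral translate of $\{\varrho_{0}\Psi(E_{1}),\varrho_{0}\Psi(E_{3})\}$; as you note, this shift is irrelevant to the direction of the supporting line.
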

\begin{proof}
Consider the two sub-tuples $\mathbf{t}_{1},\mathbf{t}_{3}$
of $\mathbf{t}$ introduced in Remark \ref{rem: homogeneous coordinate}. Both $\mathrm{Supp}(\hat{B}_{\alpha\beta}^{ij})$ and $\mathrm{Supp}(\hat{Q}_{\alpha\beta}^{ij})$ are $(\mathbf{e}_{1},\mathbf{e}_{3})$-homogeneous, as they can be expressed in terms of $\mathrm{r}$ through (\ref{eq: homogeneous with respect to clustering}) and (\ref{eq: condition monomials resonance in B}). For any monomial order $\preceq$ on $\mathrm{Supp}(D)\cup\mathrm{Supp}(\hat{Q}_{\alpha\beta}^{ij})$, if there exists a set $\mathbf{F}\subseteq \mathrm{Supp}(D)$ that makes $\mathrm{Supp}(D)$ $(\mathbf{e}_{1},\mathbf{e}_{3})$-inhomogeneous, then we can look at the minimum of such monomials with respect to $\preceq$. Specifically, the product $(\min\mathrm{Supp}(\hat{Q}_{\alpha\beta}^{ij}))\cdot(\min\mathbf{F})$ appears in $\mathrm{Supp}(\hat{B}_{\alpha\beta}^{ij})$ and makes this set $(\mathbf{e}_{1},\mathbf{e}_{3})$-inhomogeneous too, i.e. a contradiction. Therefore, $\mathrm{Supp}(D)$ is $(\mathbf{e}_{1},\mathbf{e}_{3})$-homogeneous, which implies that $\Psi(D)$ lies in the $\mathbb{Z}$-submodule generated by $\{\mathbf{e}_{1},\mathbf{e}_{3}\}=\Psi(\hat{Q}_{\alpha\beta}^{ij})$. 
\end{proof}
\begin{rem}[Characterization of the binary entropy function]
\label{rem: characterization of the binary entropy function}
We can write (\ref{eq: condition monomials resonance in B}) as
 \begin{equation}
\frac{E_{2}^{d_{1}}}{E_{3}^{d_{1}-d_{2}}E_{1}^{d_{2}}}=\left(\frac{d_{1}-d_{2}}{d_{1}}\right)^{-2(d_{1}-d_{2})}\cdot\left(\frac{d_{2}}{d_{1}}\right)^{-2d_{2}}\in\mathbb{Q}_{+}\label{eq: condition monomials resonance in B, rational factor}
 \end{equation}
which leads to  
 \begin{equation}
\frac{1}{2d_{1}}\log\left(\frac{E_{2}^{d_{1}}}{E_{3}^{d_{1}-d_{2}}E_{1}^{d_{2}}}\right)=H\left(\frac{d_{2}}{d_{1}}\right)
\label{eq: condition resonance entropy}
 \end{equation}
where $H(p)$ is the entropy associated with a Bernoulli random variable with parameter $p\in(0,1)$. 

The Bernoulli distribution is the basic law for dealing with random binary outcomes, whose functional form here is characterised by an algebraic condition: the discriminant $B_{\alpha\beta}^{ij}$ is not squarefree if and only if (\ref{eq: condition resonance entropy}) holds, which entails a normalisation condition for exponents $\Psi(\hat{\chi}(\mathcal{I}\mid_{\alpha\beta}^{ij}))$ (equivalently, the homogeneity of $\hat{B}_{\alpha\beta}^{ij}$, $\hat{Q}_{\alpha\beta}^{ij}$, and $D$ discussed in Corollary \ref{cor: class-2 D, variables and independence}) and determines the form of $H$ as a function of $d_{2}/d_{1}=\varrho$. As a basis for future research, this suggests a deeper study of the connections between the factorisation properties of sparse polynomials obtained from the deformation of different determinantal expansions and entropy functions. 
\end{rem}
\begin{example}
\label{exa: class-2 D first occurrence} 
The first case with $\hat{Q}_{\alpha\beta}^{ij}\notin\mathbb{C}$ is found at $d_{1}=2d_{2}$ in (\ref{eq: condition monomials resonance in B}), leading to the ratios $\mathrm{gcd}(d_{1},d_{2})/d_{1}=\frac{1}{2}$ and $d_{2}/(d_{1}-d_{2})=1$. Thus, we get 
 \begin{equation}
D=\varepsilon_{1}^{2}-6\varepsilon_{1}\varepsilon_{3}+\varepsilon_{3}^{2}
\label{eq: class-2 D first occurrence}
 \end{equation}
where $\varepsilon_{u}^{2}=E_{u}$, $u\in\{1,3\}$, and the factorisation
(\ref{eq: factorization square and squarefree}) is given by 
\begin{equation} 
B_{\alpha\beta}^{ij}=(\varepsilon_{1}-\varepsilon_{3})^{2}\cdot(\varepsilon_{1}^{2}-6\varepsilon_{1}\varepsilon_{3}+\varepsilon_{3}^{2}).
\label{eq: class-2 B first occurrence}
\end{equation}

This example also shows that ambiguity can arise when reconstructing
$B_{\alpha\beta}^{ij}$ from $D$: indeed, the case $D=\varepsilon_{1}^{2}-6\varepsilon_{1}\varepsilon_{3}+\varepsilon_{3}^{2}$ can be associated with both the configurations $\chi_{\mathrm{I},a}(\mathcal{I}\mid_{\alpha\beta}^{ij})=\{\varepsilon_{1},2\varepsilon_{1},\varepsilon_{3}\}$
and $\chi_{\mathrm{I},b}(\mathcal{I}\mid_{\alpha\beta}^{ij})=\{\varepsilon_{1},2\varepsilon_{3},\varepsilon_{3}\}$
with $\hat{Q}_{\alpha\beta}^{ij}=1$, as well as $\chi_{\mathrm{II}}(\mathcal{I}\mid_{\alpha\beta}^{ij})=\{\varepsilon_{1}^{2},4\varepsilon_{1}\varepsilon_{3},\varepsilon_{3}^{2}\}$ with $\hat{Q}_{\alpha\beta}^{ij}=\varepsilon_{1}-\varepsilon_{3}$. We highlight that the sets $\chi_{\mathrm{I},a}(\mathcal{I}\mid_{\alpha\beta}^{ij})$ and $\chi_{\mathrm{I},b}(\mathcal{I}\mid_{\alpha\beta}^{ij})$ are $(\Psi(\varepsilon_{1}),\Psi(\varepsilon_{3}))$-homogeneous
as well, in line with the proof of Corollary \ref{cor: class-2 D, variables and independence}. 
\end{example}

\subsection{\label{subsec: Recovering local from global data} Recovering local from global information}

Now we address the reverse problem of recovering local data that generate $Y$-terms, i.e. the sets $\chi(\mathcal{I}\mid_{\alpha\beta}^{ij})$, starting from the global information provided by $D$. In the present context, the term global means independent of the choice of the basis and the indices of the $Y$-term. 
%
\begin{lem}
\label{lem: binomial D} If $\#\mathrm{Supp}(D)=2$, then $B_{\alpha\beta}^{ij}$ is squarefree in $\mathbb{C}(\mathbf{t})$ and there exist two distinct terms $E_{u},E_{w}\in\chi(\mathcal{I}\mid_{\alpha\beta}^{ij})$ and $c_{S}\in\{1,-1\}$ such that $E_{u}=c_{S}\cdot E_{w}$.
\end{lem}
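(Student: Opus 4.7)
The plan is to split the proof into two parts: (i)~show that $Q_{\alpha\beta}^{ij}$ is a unit in $\mathbb{C}(\mathbf{t})$, so that $B_{\alpha\beta}^{ij}=(Q_{\alpha\beta}^{ij})^{2}\cdot D$ is squarefree; (ii)~deduce from the resulting equality $\#\mathrm{Supp}(B_{\alpha\beta}^{ij})=\#\mathrm{Supp}(D)=2$ that two terms of $\chi(\mathcal{I}\mid_{\alpha\beta}^{ij})$ coincide up to a sign.

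For (i), I argue by contradiction. Suppose $Q_{\alpha\beta}^{ij}$ is not a unit. Proposition~\ref{prop: factor for square and GCD} then places us in the resonance regime where $\hat{Q}_{\alpha\beta}^{ij}=E_{1}^{\varrho_{0}}-(\varrho^{-1}-1)^{-2\varrho_{0}}E_{3}^{\varrho_{0}}$, and Corollary~\ref{cor: class-2 D, variables and independence} confines $\Psi(D)$ to the $\mathbb{Z}$-submodule generated by $\Psi(\hat{Q}_{\alpha\beta}^{ij})$. Applying the unimodular change of variables of Remark~\ref{rem: invertible change of indeterminates}, I reduce to the univariate setting of the proof of Proposition~\ref{prop: factor for square and GCD}, with $E_{u}=c_{u}s^{d_{u}}$ ($c_{u}\in\mathbb{C}^{*}$, $d_{1}>d_{2}>d_{3}=0$) subject to the resonance relation~(\ref{eq: condition monomials resonance in B}). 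Setting $g:=\gcd(d_{1},d_{2})$, $p:=d_{1}/g$, $q:=d_{2}/g$, and $w:=s^{g}$, $(\hat{Q}_{\alpha\beta}^{ij})^{2}$ is a trinomial in $\mathbb{C}[w]$ supported on $\{w^{2},w,1\}$, while $\hat{B}_{\alpha\beta}^{ij}\in\mathbb{C}[w]$ is a sum of six atomic terms with exponents $\{2p,p+q,2q,p,q,0\}$ and nonzero atomic coefficients; in particular $\hat{B}_{\alpha\beta}^{ij}(0)=c_{3}^{2}\neq 0$. The six exponents are pairwise distinct unless $p=2q$, and the only instance of the latter with $\gcd(p,q)=1$ is $p=2,q=1$. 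Since $D(0)\neq 0$, the hypothesis $\#\mathrm{Supp}(D)=2$ forces $D=\alpha w^{m}-\beta$ with $\alpha,\beta\in\mathbb{C}^{*}$ and $m\geq 1$, whence $\mathrm{Supp}((\hat{Q}_{\alpha\beta}^{ij})^{2}\cdot D)\subseteq\{w^{m+2},w^{m+1},w^{m},w^{2},w,1\}$. In the regime $p\neq 2q$, matching these at-most-six monomials against the six distinct exponents of $\hat{B}_{\alpha\beta}^{ij}$ (ordered by degree) forces $q=1$ and $p=2$, violating $p\neq 2q$. Hence $p=2,q=1$, which is precisely the configuration of Example~\ref{exa: class-2 D first occurrence}, and formula~(\ref{eq: class-2 D first occurrence}) there yields $D=\varepsilon_{1}^{2}-6\varepsilon_{1}\varepsilon_{3}+\varepsilon_{3}^{2}$ with $\#\mathrm{Supp}(D)=3$, a contradiction. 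Therefore $Q_{\alpha\beta}^{ij}$ is a unit and $B_{\alpha\beta}^{ij}$ is squarefree.

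For (ii), since $Q_{\alpha\beta}^{ij}$ is a unit, $\#\mathrm{Supp}(B_{\alpha\beta}^{ij})=\#\mathrm{Supp}(D)=2$. Writing $E_{u}=g_{u}\cdot\mathbf{t}^{f_{u}}$ with $g_{u}\in\mathbb{C}^{*}$ and expanding
\begin{equation*}
B_{\alpha\beta}^{ij}=\sum_{u\in[3]}E_{u}^{2}-2\sum_{u<w}E_{u}E_{w},
\end{equation*}
the atomic exponents are $\{2f_{u},\,f_{u}+f_{w}\}$ with nonzero coefficients $\{g_{u}^{2},\,-2g_{u}g_{w}\}$. If $f_{1},f_{2},f_{3}$ were pairwise distinct, the only possible coincidence among these six exponents would be the midpoint relation $2f_{u}=f_{v}+f_{w}$, and at most one such $u$ can satisfy it, because two simultaneous midpoint identities would yield $3(f_{u}-f_{v})=0$ and hence $f_{u}=f_{v}$. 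An inspection of the remaining coefficients (all products or squares of nonzero $g_{u}$'s) shows that at least four distinct monomials survive in $B_{\alpha\beta}^{ij}$, contradicting $\#\mathrm{Supp}(B_{\alpha\beta}^{ij})=2$. Therefore two of the $f_{u}$'s coincide; by symmetry assume $f_{1}=f_{2}$, so that $E_{1}=c\cdot E_{2}$ with $c\in\mathbb{C}^{*}$ and
\begin{equation*}
B_{\alpha\beta}^{ij}=(c-1)^{2}E_{2}^{2}+E_{3}^{2}-2(c+1)E_{2}E_{3}.
\end{equation*}
If furthermore $f_{3}=f_{2}$, all three monomials collapse and $\#\mathrm{Supp}(B_{\alpha\beta}^{ij})\leq 1$, contrary to the hypothesis; hence $f_{2}\neq f_{3}$, the three monomials are pairwise distinct, and since the coefficient of $E_{3}^{2}$ equals $1\neq 0$, the constraint $\#\mathrm{Supp}(B_{\alpha\beta}^{ij})=2$ forces either $(c-1)^{2}=0$ or $c+1=0$. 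In either case $c\in\{1,-1\}$ and $E_{1}=c_{S}\cdot E_{2}$ with $c_{S}\in\{1,-1\}$, as required.

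The main obstacle is the bookkeeping in part~(i): the support matching between $\hat{B}_{\alpha\beta}^{ij}$ and $(\hat{Q}_{\alpha\beta}^{ij})^{2}\cdot D$ requires a careful case split on how the small-value regimes of $p,q,m$ generate exponent coincidences (notably the edge case $p=2q$) and potential coefficient cancellations, before the reduction to the resonance $p=2,q=1$ handled by Example~\ref{exa: class-2 D first occurrence}.
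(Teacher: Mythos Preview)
Your proof is correct and follows the same strategy as the paper: rule out a non-unit $\hat{Q}_{\alpha\beta}^{ij}$ by reducing via Proposition~\ref{prop: factor for square and GCD} and Corollary~\ref{cor: class-2 D, variables and independence} to the univariate resonance, show that this forces the $\varrho=\tfrac12$ configuration of Example~\ref{exa: class-2 D first occurrence} (whose $D$ is a trinomial), and then read off $c_S\in\{1,-1\}$ from $\#\mathrm{Supp}(\hat{B}_{\alpha\beta}^{ij})=2$. The only difference is in execution of part~(i): where you match the full exponent sets $\{0,q,2q,p,p+q,2p\}$ against $\{0,1,2,m,m+1,m+2\}$, the paper instead compares the least nonzero valuations $\varrho_0$, $2(1-\varrho_0)$, $\varrho$ (and their duals under $\varrho\leftrightarrow 1-\varrho$) to pin down $\varrho_0=\varrho=1-\varrho=\tfrac12$; the two arguments are interchangeable and your terse matching step is indeed correct once one notes that the six exponents of $\hat{B}_{\alpha\beta}^{ij}$ are distinct when $p\neq 2q$, forcing $m\ge 3$ and then $q=1$, $2p=m+2=p+2$.
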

\begin{proof}
From (\ref{eq: quadratic factor in resonance}), a non-invertible square factor $\hat{Q}_{\alpha\beta}^{ij}\in\mathbb{C}(\mathbf{t})$ of $\hat{B}_{\alpha\beta}^{ij}$ when $\#\mathrm{Supp}(D)=2$ entails the existence of $\mathbf{e}_{1},\mathbf{e}_{3}\in\mathbb{Z}$ such that 
 \begin{equation}
\Psi(D)=\{2\cdot(\varrho_{0}^{-1}-1)\cdot\mathbf{e}_{1},2\cdot(\varrho_{0}^{-1}-1)\cdot\mathbf{e}_{3}\}\label{eq: D when binomial D}
 \end{equation}
We can order the terms in $\hat{B}_{\alpha\beta}^{ij}$, $\hat{Q}_{\alpha\beta}^{ij}$, and $D$ based on their fractional degree $\nu_{1}(\cdot)$ with respect to a non-empty tuple between those involved in Remark \ref{rem: homogeneous coordinate} and Definition \ref{def: coherent and balanced sets}, say $\mathbf{t}_{1}$: we normalise $\nu_{1}(E_{1})=1$, while $\nu_{1}(E_{3})=0$ and, from (\ref{eq: condition monomials resonance in B}), $\nu_{1}(E_{2})=\varrho$. The induced order is total since $\hat{B}_{\alpha\beta}^{ij}$, $\hat{Q}_{\alpha\beta}^{ij}$, and $D$ are $(\mathbf{e}_{1},\mathbf{e}_{3})$-homogeneous.

This allows comparing the exponents in $\nu_{1}(\mathrm{Supp}(\hat{B}_{\alpha\beta}^{ij}))$ with the corresponding ones in the expansion
$\nu_{1}(\mathrm{Supp}(D\cdot(\hat{Q}_{\alpha\beta}^{ij})^{2}))$: the minimum is $0$ for all sets, while the least non-vanishing weights are
\begin{equation*}
\min_{(0,1)}\nu_{1}(\mathrm{Supp}((\hat{Q}_{\alpha\beta}^{ij})^{2})) = \varrho_{0}, \quad 
\min_{(0,1)}\nu_{1}(\mathrm{Supp}(D)) =  2\cdot(1-\varrho_{0}), \quad 
\min_{(0,1)} \nu_{1}(\mathrm{Supp}(\hat{B}_{\alpha\beta}^{ij})) = \varrho
\end{equation*}
where the last equality follows from $0<d_{2}<d_{1}$, as stated in (\ref{eq: condition monomials resonance in B}). The same condition, combined with (\ref{eq: defining exponents for resonance}), entails $ 0\leq\varrho_{0}\leq \frac{1}{2}$ and $\varrho_{0}\leq \varrho < 1$, which can be summarised by
 \begin{equation}
   0 < \varrho_{0} \leq \varrho < 1\leq 2\cdot (1-\varrho_{0}).
   \label{eq: constraints on ratios}
 \end{equation} 
To satisfy (\ref{eq: factorization square and squarefree}), two elements of $\{2\cdot(1-\varrho_{0}),\varrho_{0},\varrho\}$ must coincide, and (\ref{eq: constraints on ratios}) forces $\varrho_{0}=\varrho$, that is, $d_{2}\mid d_{1}$. We can repeat this argument considering the dual of the previous order, which coincides with the order induced by the valuation $\nu_{3}(\cdot)$ with respect to $\mathbf{t}_{3}$ when it is non-empty, or equivalently considering maxima instead of minima. This corresponds to the transformations $\mathbf{e}_{1}\leftrightarrows\mathbf{e}_{3}$ and $\varrho\leftrightarrows1-\varrho$, according to (\ref{eq: condition monomials resonance in B}). Thus, we also get $\varrho_{0}=1-\varrho$, which means $\varrho=\varrho_{0}=\frac{1}{2}$. This leads to (\ref{eq: class-2 D first occurrence}) and does not satisfy $\#\mathrm{Supp}(D)=2$. Therefore, $D=\hat{B}_{\alpha\beta}^{ij}$ and to obtain $\#\mathrm{Supp}(D)=2$, we find two linearly dependent elements of $\hat{\chi}(\mathcal{I}\mid_{\alpha\beta}^{ij})$, say $E_{2}=c_{S}\cdot E_{3}$, where $c_{S}\in\mathbb{C}$ satisfies $D=(c_{S}-1)^{2}E_{3}^{2}-2(c_{S}+1)E_{3}E_{1}+E_{1}^{2}$. This is a binomial if and only if $c_{S}\in\{1,-1\}$. 
\end{proof}
A possible issue arising from this process has been highlighted in
Example \ref{exa: class-2 D first occurrence}. Before analysing it in more detail, the results of Proposition \ref{prop: factor for square and GCD} and Lemma \ref{lem: binomial D} suggest the following definition. 
\begin{defn}
\label{def: type and class of radical terms}[\textit{Type and class
of radical terms}] 
We say that a configuration $\chi(\mathcal{I}\mid_{\alpha\beta}^{ij})$
(or the associated polynomials $B_{\alpha\beta}^{ij}$ and $Y_{\alpha\beta}^{ij}$)
is of \textit{$G$-type} (generic type) if $\#\Psi(D)>2$ and is of
\textit{$S$-type} (singular type) if $\#\Psi(D)=2$. A $G$-type
configuration $\chi(\mathcal{I}\mid_{\alpha\beta}^{ij})$ is \textit{class-$\mathrm{I}$} if $Q_{\alpha\beta}^{ij}$ is a monomial, and it is \textit{class-$\mathrm{II}$ }if $\hat{Q}_{\alpha\beta}^{ij}$
is a binomial (\ref{eq: quadratic factor in resonance}). 
\end{defn}
We stress that the type of configuration is independent of the set $\chi(\mathcal{I}\mid_{\alpha\beta}^{ij})$, since it depends only on $D$, while this may not hold for the class, as shown in Example \ref{exa: class-2 D first occurrence}.
\begin{prop}
\label{prop: class-2, recover hat-B from D} For $G$-type configurations, we can reconstruct $\hat{B}_{\alpha\beta}^{ij}$ from $D$ and the knowledge of the class of $\hat{B}_{\alpha\beta}^{ij}$. 
\end{prop}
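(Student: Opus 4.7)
The plan is to split by class following Definition \ref{def: type and class of radical terms}, using Proposition \ref{prop: factor for square and GCD} for the structure of $\hat{Q}_{\alpha\beta}^{ij}$. For Class I, $\hat{Q}_{\alpha\beta}^{ij}$ is invertible in $\mathbb{C}(\mathbf{t})$, so the identity $\hat{B}_{\alpha\beta}^{ij}=(\hat{Q}_{\alpha\beta}^{ij})^{2}\cdot D$ exhibits $\hat{B}_{\alpha\beta}^{ij}$ and $D$ as associates in $\mathbb{C}(\mathbf{t})$. The normalisation of $\hat{B}_{\alpha\beta}^{ij}$ prescribed by (\ref{eq: rationalized A, B and Q w.r.t. GCD}), namely division by $G_{\alpha\beta}^{ij}$, ensures that the monomial support of $\hat{B}_{\alpha\beta}^{ij}$ has no common monomial factor; this reduces the Laurent unit $(\hat{Q}_{\alpha\beta}^{ij})^{2}$ to a constant. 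By picking the canonical squarefree representative of $D$ (with coprime support), the remaining scalar is fixed by matching a distinguished extremal coefficient, which yields $\hat{B}_{\alpha\beta}^{ij}$.

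For Class II, $\hat{Q}_{\alpha\beta}^{ij}$ has the explicit binomial form (\ref{eq: quadratic factor in resonance}), and the reconstruction amounts to identifying $\mathbf{e}_{1},\mathbf{e}_{3},\varrho,\varrho_{0}$ together with the coefficient $(\varrho^{-1}-1)^{-2\varrho_{0}}$ from $D$. Corollary \ref{cor: class-2 D, variables and independence} implies that $\Psi(D)$ lies along an affine line of $\mathbb{Z}^{d}$ with primitive direction $\mathbf{e}_{1}-\mathbf{e}_{3}$; the $G$-type hypothesis $\#\Psi(D)>2$ ensures that at least three collinear lattice points of this line are visible in $\Psi(D)$, so the direction is recovered unambiguously. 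The extremal exponent vectors of $\Psi(D)$ along this direction originate from the $E_{1}^{2}$ and $E_{3}^{2}$ contributions in $\hat{B}_{\alpha\beta}^{ij}$ divided by the $E_{1}^{2\varrho_{0}}$ and $E_{3}^{2\varrho_{0}}$ monomials of $(\hat{Q}_{\alpha\beta}^{ij})^{2}$, so their spacing relative to the intermediate points of $\Psi(D)$ determines $\varrho_{0}$ and hence $\varrho$. The vectors $\mathbf{e}_{1},\mathbf{e}_{3}$ are then fixed up to the symmetry $\mathbf{e}_{1}\leftrightarrows\mathbf{e}_{3}$ of Remark \ref{rem: homogeneous coordinate}, which leaves $(\hat{Q}_{\alpha\beta}^{ij})^{2}$ invariant up to a unit, while the scalar coefficient in $\hat{Q}_{\alpha\beta}^{ij}$ is determined by the resonance relation (\ref{eq: condition monomials resonance in B, rational factor}). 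The reconstruction then follows by setting $\hat{B}_{\alpha\beta}^{ij}=(\hat{Q}_{\alpha\beta}^{ij})^{2}\cdot D$.

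The hard part is the Class II step. Example \ref{exa: class-2 D first occurrence} already demonstrates that without the class input one cannot distinguish configurations that produce the same $D$, which justifies why the class hypothesis must be supplied. Given the class, the delicate point is the identification of the extremal lattice points of $\Psi(D)$ with the correct multiples of $\mathbf{e}_{1},\mathbf{e}_{3}$ rather than with intermediate collinear exponents: this relies on tracking how the binomial $\hat{Q}_{\alpha\beta}^{ij}$ interacts with the six-term expansion $\hat{B}_{\alpha\beta}^{ij}=E_{c}^{2}+E_{s_{1}}^{2}+E_{s_{2}}^{2}-2(E_{c}E_{s_{1}}+E_{c}E_{s_{2}}+E_{s_{1}}E_{s_{2}})$ under the resonance (\ref{eq: condition monomials resonance in B}), and verifying that the leading and trailing monomials of $D$ survive the division by $(\hat{Q}_{\alpha\beta}^{ij})^{2}$ and are not cancelled by the $E_{c}E_{s_{u}}$ cross terms. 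The $G$-type condition $\#\Psi(D)>2$ is exactly what guarantees that enough of the linear structure of $\Psi(D)$ is exposed for this identification to succeed.
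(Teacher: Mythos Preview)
Your overall strategy matches the paper's, but the Class~II reconstruction has a genuine gap. The step ``their spacing relative to the intermediate points of $\Psi(D)$ determines $\varrho_{0}$ and hence $\varrho$'' conflates two distinct recoveries. The quantity $\varrho_{0}=\gcd(d_{1},d_{2})/d_{1}$ can indeed be read off from $D$, but it does \emph{not} determine $\varrho=d_{2}/d_{1}$: take $d_{1}=15$, $d_{2}=6$, so $\varrho_{0}=1/5$ while $\varrho=2/5\notin\{\varrho_{0},1-\varrho_{0}\}$. Since the scalar coefficient $(\varrho^{-1}-1)^{-2\varrho_{0}}$ in $\hat{Q}_{\alpha\beta}^{ij}$ depends on $\varrho$, your proposed route through the resonance relation (\ref{eq: condition monomials resonance in B, rational factor}) is circular: that relation constrains $E_{1},E_{2},E_{3},d_{1},d_{2}$ jointly, but you do not yet know $E_{2}$ (it sits inside $\hat{B}_{\alpha\beta}^{ij}$, which is what you are reconstructing), so one equation in two unknowns does not pin down the coefficient.

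The paper avoids this by never trying to recover $\varrho$ at all. It first proves that the minimal gap $\mu$ in $\Psi(D)$ equals $\nu_{\mathbf{d}_{1}}(E_{1})\cdot\varrho_{0}$ via matching upper and lower bounds (the implications (\ref{eq: coincidence of left-gaps})--(\ref{eq: coincidence of right-gaps}) combined with Corollary~\ref{cor: class-2 D, variables and independence}); this is the step you assert but do not argue. Then, for $\varrho_{0}<\tfrac{1}{2}$, it extracts the unit $\mathrm{r}$ \emph{directly} from the cancellation identities (\ref{eq: constraint from simplification, left-hand-side})--(\ref{eq: constraint from simplification, right-hand-side}) between the two leading (respectively trailing) monomials of $D$, obtaining $\mathrm{r}=2\mathbf{d}_{1}\mathbf{d}_{2}^{-1}$ when the compatibility quantity $\Xi=4\mathbf{d}_{\Omega}\mathbf{d}_{1}\mathbf{d}_{2}^{-1}\mathbf{d}_{\Omega-1}^{-1}$ equals $1$, and otherwise showing that the two candidate reconstructions (at $\varrho_{0}$ and $1-\varrho_{0}$) yield $B$-terms with identical factors by Remark~\ref{rem: homogeneous coordinate}. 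This bypasses $\varrho$ entirely and is the missing mechanism in your argument: you need to explain how the \emph{coefficient} of $\hat{Q}_{\alpha\beta}^{ij}$ (equivalently $\mathrm{r}$), not just its exponent support, is recovered from $D$.
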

\begin{proof}
We focus on class-$\mathrm{II}$ configurations; otherwise, $\hat{B}_{\alpha\beta}^{ij}=D$. 
Let $\Omega:=\#\mathrm{Supp}(D)>2$ and proceed as follows: 
choose any variable $t_{p}$ such that $\partial_{t_{p}}D\neq 0$ and order the elements in $\mathrm{Supp}(D)$ according to their degree with respect to $t_{p}$. This order is total since all the elements in $\Psi(D)$ belong to a $\mathbb{Z}$-submodule of $\mathbb{Z}^{d}$ generated by two vectors $\mathbf{f}_{1},\mathbf{f}_{\Omega}$, and $\mathrm{Supp}(D)$ is $(\mathbf{f}_{1},\mathbf{f}_{\Omega})$-homogeneous by Corollary \ref{cor: class-2 D, variables and independence}. We can identify the maximum $\mathbf{d}_{1}$ and minimum $\mathbf{d}_{\Omega}$ in $\mathrm{Supp}(D)$, so the same order is induced by the valuation $\nu_{\mathbf{d}_{1}}(\mathbf{d}_{u}):=\deg_{t_{p}}(\mathbf{d}_{u})/\deg_{t_{p}}(\mathbf{d}_{1})$, $u\in[\Omega]$. 
This function is related to the valuation $\nu_{E_{1}}$ with respect to the monomial $E_{1}$ introduced in (\ref{eq: univariate B, E}) via 
\begin{equation} 
\nu_{\mathbf{d}_{1}}(E_{1})^{-1} =\nu_{E_{1}}(\mathbf{d}_{1})=\max(\nu_{E_{1}})(\hat{B}_{\alpha\beta}^{ij})-\max\nu_{E_{1}}((\hat{Q}_{\alpha\beta}^{ij})^2)=2\cdot (1-\varrho_{0}).
\label{eq: relation between normalizations}
\end{equation}

Denoting $q_{u}:=\nu_{\mathbf{d}_{1}}(\mathbf{d}_{u})$, $u\in[\Omega]$, we recover $\varrho_{0}:=\gcd(d_{1},d_{2})/d_{1}$ by looking at the minimal gap  
 \begin{equation} 
    \mu:=\min_{u}\{q_{u}-q_{u+1}\}.
    \label{eq: minimal gap in D}
 \end{equation}
Indeed, we can evaluate $\nu_{\mathbf{d}_{1}}$ at monomials in (\ref{eq: univariate B, E}), using the expansion of (\ref{eq: factorization square and squarefree}) to relate them to monomials in $\mathrm{Supp}(D)$: we have 
 \begin{eqnarray}
q_{1}-q_{2} > \nu_{\mathbf{d}_{1}}(E_{1}) \varrho_{0}  & \Rightarrow & \varrho_{0} =  \frac{\nu_{\mathbf{d}_{1}}(E_{1})-\nu_{\mathbf{d}_{1}}(E_{2})}{\nu_{\mathbf{d}_{1}}(E_{1})} = 1-\varrho
\label{eq: coincidence of left-gaps} \\  q_{\Omega-1}-q_{\Omega} > \nu_{\mathbf{d}_{1}}(E_{1}) \varrho_{0} & \Rightarrow & \varrho_{0} = \frac{\nu_{\mathbf{d}_{1}}(E_{2})-\nu_{\mathbf{d}_{1}}(E_{3})}{\nu_{\mathbf{d}_{1}}(E_{1})} = \varrho
\label{eq: coincidence of right-gaps}
 \end{eqnarray}
so the combination of the two previous assumptions entails $\varrho=\varrho_{0}=\frac{1}{2}$ too; from (\ref{eq: relation between normalizations}), in this case we can conclude $\nu_{\mathbf{d}_{1}}(\varrho_{0})=\frac{1}{2}$, which is an upper bound for $\mu$. Even when (\ref{eq: coincidence of left-gaps}) and (\ref{eq: coincidence of right-gaps}) do not hold simultaneously, by contraposition, we infer 
\begin{equation*} 
    \mu \leq \min\{q_{1}-q_{2},q_{\Omega-1}-q_{\Omega}\}\leq\nu_{\mathbf{d}_{1}}(E_{1})\cdot\varrho_{0}.
\end{equation*}
On the other hand, from Corollary \ref{cor: class-2 D, variables and independence} we know that $\Psi(D)$ lies in the $\mathbb{Z}$-submodule generated by $\Psi(\hat{Q}_{\alpha\beta}^{ij})$, so $\mu$ is bounded from below by $\max\nu_{\mathbf{d}_{1}}(\mathrm{Supp}(\hat{Q}_{\alpha\beta}^{ij}))=\nu_{\mathbf{d}_{1}}(E_{1})\varrho_{0}$. Therefore, $\mu=\nu_{\mathbf{d}_{1}}(E_{1})\cdot\varrho_{0}$
and we can recover $\varrho_{0}$ from $\mu$ using (\ref{eq: relation between normalizations}). Furthermore, from the knowledge of $\varrho_{0}$ we can consider an appropriate scaling for elements $q_{u}$, that is, $s_{u}:=\nu_{E_{1}}(\mathbf{d}_{u})=2\cdot (1-\varrho_{0})\cdot q_{u}$, $u\in [\Omega]$. 

When $\varrho_{0}=\frac{1}{2}$ we infer $\varrho=\varrho_{0}$ and recover the decomposition shown in Example \ref{exa: class-2 D first occurrence}, so we focus on $\varrho_{0}<\frac{1}{2}$. Looking at the expansion of $\hat{B}_{\alpha\beta}^{ij}=(\hat{Q}_{\alpha\beta}^{ij})^{2}\cdot D$, a necessary and sufficient condition for $\varrho_{0}<\frac{1}{2}$ is that a cancelation occurs and one between 
\begin{eqnarray} 
0 & = & E_{1}^{2\varrho_{0}}\mathbf{d}_{2}-2\left(\varrho^{-1}-1\right)^{-2\cdot\varrho_{0}}E_{1}^{\varrho_{0}}E_{3}^{\varrho_{0}}\mathbf{d}_{1}, 
\label{eq: constraint from simplification, left-hand-side}\\
0 & = & \left(\varrho^{-1}-1\right)^{-4\cdot\varrho_{0}}E_{3}{}^{2\varrho_{0}}\mathbf{d}_{\Omega-1}-2\left(\varrho^{-1}-1\right)^{-2\cdot\varrho_{0}}E_{1}^{\varrho_{0}}E_{3}^{\varrho_{0}}\mathbf{d}_{\Omega}
\label{eq: constraint from simplification, right-hand-side} 
\end{eqnarray}
holds. We can discriminate the compatibility of the previous two relations by looking at $\Xi:=4\mathbf{d}_{\Omega}\mathbf{d}_{1}\mathbf{d}_{2}^{-1}\mathbf{d}_{\Omega-1}^{-1}$. At $\Xi=1$, (\ref{eq: constraint from simplification, left-hand-side}) and (\ref{eq: constraint from simplification, right-hand-side}) are consistent and define a unique value $\mathrm{r}=E_{1}^{\varrho_{0}}(\varrho^{-1}-1)^{2\cdot\varrho_{0}}E_{3}^{-\varrho_{0}}=2\mathbf{d}_{2}^{-1}\mathbf{d}_{1}$. 
On the other hand, $\Xi\neq1$ means that exactly one of these relations holds, i.e. (\ref{eq: constraint from simplification, left-hand-side}) at $\varrho=\varrho_{0}$ and (\ref{eq: constraint from simplification, right-hand-side}) at $\varrho=1-\varrho_{0}$. The relation associated with a choice of $\varrho$ determines the corresponding unit $\mathrm{r}_{\varrho}$ introduced in (\ref{eq: homogeneous with respect to clustering}), and hence the $B$-term, which we denote by $\mathrm{B}_{\varrho}(\mathrm{r}_{\varrho})$. Given $\mathrm{B}_{\varrho_{0}}(\mathrm{r}_{\varrho_{0}})$, the polynomial $\mathrm{r}^{2/\varrho_{0}}\cdot\mathrm{B}_{1-\varrho_{0}}(\mathrm{r}_{\varrho_{0}}^{-1})$ has the same factors in $\mathbb{C}(\mathbf{t})$ as noted in Remark \ref{rem: homogeneous coordinate}, thus they are the unique $B$-terms compatible with $D$ at $\varrho_{0}$ and $1-\varrho_{0}$, respectively. 

In this way, we obtain information that uniquely defines $(Q_{\alpha\beta}^{ij})^{2}$ and, from (\ref{eq: factorization square and squarefree}), $B_{\alpha\beta}^{ij}$, up to units. 
\end{proof}
\begin{rem}
The last step of the previous proof can also be checked by expanding $(\mathrm{r}-k_{\varrho})^{2}\cdot D$ with $k_{\varrho}\in\mathbb{C}$, verifying the compatibility of $D$ with the two choices $\varrho\in\{\varrho_{0},1-\varrho_{0}\}$. Fix the unit $\mathrm{r}$ satisfying (\ref{eq: homogeneous with respect to clustering}) at $\varrho=\varrho_{0}$, so that $k_{\varrho_{0}}=1$. Looking at the $(2/\varrho_{0})$-degree term in the expansion, we get $\mathrm{r}^{2}\mathbf{d}_{1}=E_{1}^{2}$; as for the $(2/\varrho_{0}-1)$-degree term, we find $\mathrm{r}^{2}\mathbf{d}_{2}-2k_{\varrho_{0}}\mathrm{r}\mathbf{d}_{1}=0$ and $\mathrm{r}^{2}\mathbf{d}_{2}-2k_{1-\varrho_{0}}\mathrm{r}\cdot\mathbf{d}_{1}=2E_{1}E_{2}$, where $E_{1},E_{2}$ refer to the expression (\ref{eq: univariate B, E}) at $\varrho=1-\varrho_{0}$. Solving for $\mathrm{r}$ and $E_{2}$, we obtain 
\begin{equation}
    \mathrm{r}=2\mathbf{d}_{1}\mathbf{d}_{2}^{-1},\quad E_{2}=2^{-1}\mathbf{d}_{1}^{-1}\mathbf{d}_{2}\cdot E_{1}\cdot(1-k_{1-\varrho_{0}}).
\label{eq: ambiguity reconstruction, level 1}
\end{equation}
If $\varrho_{0}<\frac{1}{3}$, looking at the $(2/\varrho_{0}-2)$-degree term, we also get $\mathrm{r}^{2}\mathbf{d}_{3}-2k_{\varrho_{0}}\mathrm{r}\mathbf{d}_{2}+\mathbf{d}_{1}=0$ and $\mathrm{r}^{2}\mathbf{d}_{3}-2k_{1-\varrho_{0}}\mathrm{r}\mathbf{d}_{2}+k_{1-\varrho_{0}}^{2}\mathbf{d}_{1}=E_{2}^{2}$, which can be combined with (\ref{eq: ambiguity reconstruction, level 1}) to infer $\mathbf{d}_{3}=\frac{1}{4}(4k_{\varrho_{0}}-1)\cdot\mathbf{d}_{1}^{-1}\mathbf{d}_{2}^{2}$ and $\mathbf{d}_{3}=\frac{1}{4}(2k_{1-\varrho_{0}}+1)\cdot\mathbf{d}_{1}^{-1}\mathbf{d}_{2}^{2}$. Together with $k_{\varrho_{0}}=1$ and $\mathbf{d}_{1}^{-1}\mathbf{d}_{2}^{2}\neq0$, the previous equations are compatible only if $k_{1-\varrho_{0}}=1$ too, so the two choices of $\varrho$ return the same terms $Q_{\alpha\beta}^{ij}$ and $B_{\alpha\beta}^{ij}$ up to units in $\mathbb{C}(\mathbf{t})$. The same result is easily checked by direct verification at $\varrho_{0}=\frac{1}{3}$. 
\end{rem}
\begin{thm}
\label{thm: distinguishability terms from Y} From any radical $Y_{\alpha\beta}^{ij}$
we can recover a finite number $N_{D}$ of monomial configurations $(E_{1},E_{2},E_{3})\in\mathbb{C}(\mathbf{t})^{3}$ defining radical $Y$-terms. 
In particular, there are no radical terms if $n-k > 2 N_{D}+2$. 
\end{thm}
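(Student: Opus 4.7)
The plan is to prove the theorem in two stages: first, enumerate the monomial configurations compatible with any given radical $Y$-term; second, invoke a pigeonhole argument to rule out radical terms when $n-k$ is too large. Both stages lean heavily on the machinery already developed in Sections \ref{sec: Algebraic constraints on algebraic extensions}--\ref{sec: All three-terms non-vanishing}, so the work is essentially an assembly of Propositions \ref{prop: same radical for all starting sets} and \ref{prop: class-2, recover hat-B from D} with a combinatorial finish.

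For the enumeration, I would start from a radical $Y_{\alpha\beta}^{ij}$ and read off its minimal polynomial $F_{\alpha\beta}^{ij}(X)$ from (\ref{eq: polynomial h to Y}), recovering the discriminant $B_{\alpha\beta}^{ij}$ up to squares in $\mathbb{C}(\mathbf{t})^{\times}$. Proposition \ref{prop: same radical for all starting sets} guarantees that the squarefree part $D$ is intrinsic (independent of the choice of radical $Y$-term), and Proposition \ref{prop: class-2, recover hat-B from D} reconstructs $\hat{B}_{\alpha\beta}^{ij}$ from $D$ after disambiguating between class-$\mathrm{I}$ and class-$\mathrm{II}$, which is itself a finite choice. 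From $\hat{B}_{\alpha\beta}^{ij}$ and the identity $\hat{B}_{\alpha\beta}^{ij}=(E_{1}-E_{2}-E_{3})^{2}-4E_{2}E_{3}$, only finitely many admissible monomial triples $(E_{1},E_{2},E_{3})$ remain, modulo the natural $E_{2}\leftrightarrow E_{3}$ swap coming from the multiset structure of $\chi(\mathcal{I}\mid_{\alpha\beta}^{ij})$; their count $N_{D}$ then depends only on $D$.

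For the bound I argue by contradiction: assume $n-k>2N_{D}+2$ and that a radical $Y_{\alpha\beta}^{ij}$ exists. After fixing the basis $\mathcal{I}$ and upper indices $i,j$, for each $\gamma\in\mathcal{I}^{\mathtt{C}}\setminus\{\alpha,\beta\}$ the identity $Y_{\alpha\beta}^{ij}=-Y_{\alpha\gamma}^{ij}\cdot Y_{\gamma\beta}^{ij}$ from (\ref{eq: associativity, d}), combined with Lemma \ref{lem: common squarefree}, forces at least one of $Y_{\alpha\gamma}^{ij}$, $Y_{\gamma\beta}^{ij}$ to be radical. Since the data $(h(\mathcal{I}_{\alpha}^{i}),h(\mathcal{I}_{\alpha}^{j}))$ and $(h(\mathcal{I}_{\beta}^{i}),h(\mathcal{I}_{\beta}^{j}))$ are common to all these configurations, each of the $N_{D}$ admissible configurations on the $\alpha$-side and on the $\beta$-side pins down the pair $(h(\mathcal{I}_{\gamma}^{i}),h(\mathcal{I}_{\gamma}^{j}))$ up to the $E_{2}\leftrightarrow E_{3}$ swap, yielding at most $2N_{D}$ possibilities overall. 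Because $n-k-2>2N_{D}$, pigeonhole produces two distinct indices $\gamma_{1},\gamma_{2}$ with identical associated pairs.

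The main obstacle will be converting this coincidence into a clean algebraic contradiction. The plan is to combine the equality of the pairs $(h(\mathcal{I}_{\gamma_{u}}^{i}),h(\mathcal{I}_{\gamma_{u}}^{j}))$ for $u=1,2$ with the Pl\"ucker relations (\ref{eq: Grassmann-Plucker including permutations}) applied to $\mathbf{R}(\mathbf{t})$ in order to exhibit a nontrivial algebraic relation incompatible with the genericity of $\mathbf{R}(\mathbf{t})$ in Assumption \ref{claim: generic matrices}, or alternatively to force $B_{\alpha\beta}^{ij}$ to be a perfect square so that Lemma \ref{lem: case B^(ij)_(ab) square} yields $Y_{\alpha\beta}^{ij}\in\mathbb{C}$, contradicting radicality. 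A subsidiary subtlety is verifying that $N_{D}$ is truly independent of the choice of class-$\mathrm{I}$ versus class-$\mathrm{II}$ interpretation and of the basis; this should follow from the globality of $D$ secured by Proposition \ref{prop: same radical for all starting sets} together with the permutation symmetry of the $B$-terms in the indices $\{i,j,\alpha,\beta\}$.
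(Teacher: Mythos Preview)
Your enumeration stage and the pigeonhole setup match the paper's argument. The gap is in the last step, where you try to extract a contradiction from two indices $\gamma_{1},\gamma_{2}$ sharing a configuration.

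You claim that a configuration ``pins down the pair $(h(\mathcal{I}_{\gamma}^{i}),h(\mathcal{I}_{\gamma}^{j}))$''. It does not: the triples $(E_{1},E_{2},E_{3})$ are only recovered from $D$ up to the common unit $G_{\alpha\gamma}^{ij}$ of (\ref{eq: greatest common divisor}), and this unit depends on $\gamma$. So two indices with the same normalised configuration need not have equal (or even proportional in a controlled way) monomial pairs, and neither of your proposed finishes---Pl\"ucker relations for $\mathbf{R}(\mathbf{t})$ in the raw form, or forcing $B_{\alpha\beta}^{ij}$ to be a perfect square---follows from that coincidence alone. You correctly flag this as the main obstacle, but the routes you sketch are not the ones that close it.

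What a configuration \emph{does} determine is the value of the $Y$-term itself: formula (\ref{eq: radical expression}) is homogeneous of degree zero in the elements of $\chi(\mathcal{I}\mid_{\alpha\gamma}^{ij})$, so $Y_{\alpha\gamma}^{ij}$ depends only on the normalised ordered triple and the choice of square root, both of which are encoded in the configuration. Hence the $N_{D}$ configurations yield at most $N_{D}$ distinct elements of $\mathbb{F}(\sqrt{D})$ that a radical $Y_{\alpha\gamma}^{ij}$ can equal. With $N_{D}+1$ such terms on the $\alpha$-side (guaranteed by the pigeonhole when $n-k>2N_{D}+2$), two of them coincide: $Y_{\alpha\gamma_{p}}^{ij}=Y_{\alpha\gamma_{q}}^{ij}$. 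Then (\ref{eq: associativity, d}) gives
\[
Y_{\gamma_{p}\gamma_{q}}^{ij}=-Y_{\gamma_{p}\alpha}^{ij}\cdot Y_{\alpha\gamma_{q}}^{ij}=-\bigl(Y_{\alpha\gamma_{p}}^{ij}\bigr)^{-1}\cdot Y_{\alpha\gamma_{q}}^{ij}=-1,
\]
which is forbidden by Remark \ref{rem: no Y=-1}. This \emph{is} the genericity-of-$\mathbf{R}$ contradiction you were looking for, since Remark \ref{rem: no Y=-1} is precisely the statement that $Y=-1$ would force a vanishing maximal minor of $\mathbf{R}(\mathbf{t})$.

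A small bookkeeping remark: the paper's count $N_{D}$ already includes the ordering of $(E_{1},E_{2},E_{3})$ (this is item iv.\ in its list of multiplicity sources), so the $E_{2}\leftrightarrow E_{3}$ swap is absorbed and your extra factor of $2$ should be dropped.
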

\begin{proof}
All $B$-terms are homogeneous quadratic expressions of the elements of $\mathfrak{\chi}(\mathcal{I}\mid_{\alpha\beta}^{ij})$, so they can be recovered from $Y_{\alpha\beta}^{ij}$ up to $G_{\alpha\beta}^{ij}$.
We focus on $\hat{\chi}(\mathcal{I}\mid_{\alpha\beta}^{ij})$, consider a given monomial order on $\mathrm{Supp}(D)$, and label its elements accordingly, that is, $u<v$ implies $\mathbf{d}_{u}\prec\mathbf{d}_{v}$,
$u,v\in[\Omega]$ with $\Omega:=\#\mathrm{Supp}(D)$. 

Starting from class-$\mathrm{II}$ configurations, Proposition \ref{prop: class-2, recover hat-B from D} entails that $\varrho_{0}$, $\hat{Q}_{\alpha\beta}^{ij}$, and the set $\{\varrho,1-\varrho\}$ are uniquely defined by $D$; for each individual choice of the value $\varrho$ or $1-\varrho$, we can reconstruct $\hat{\chi}(\mathcal{I}\mid_{\alpha\beta}^{ij})$ from $\mathrm{Supp}(\hat{Q}_{\alpha\beta}^{ij})$ with this information. 
According to Remark \ref{rem: homogeneous coordinate}, the two choices are related by $\varrho\leftrightarrows1-\varrho$ and induce a permutation of the roots of $B_{\alpha\beta}^{ij}$. 

Now we move to class-$\mathrm{I}$ configurations and look at $\mathbf{d}_{1}=\max\mathrm{Supp}(D)$ and $\mathbf{d}_{\Omega}=\min\mathrm{Supp}(D)$, which coincide if and only if $Y_{\alpha\beta}^{ij}\in\mathbb{C}$.
We fully recover $\hat{\chi}(\mathcal{I}\mid_{\alpha\beta}^{ij})=\{E_{1},E_{2},E_{3}\}$
up to common factors when $\Omega\geq4$, since we can fix a square root of $E_{1}^2:=\mathbf{d}_{1}$ and obtain $E_{2}=\frac{1}{2}E_{1}\cdot\mathbf{d}_{2}\mathbf{d}_{1}^{-1}$
and $E_{3}:=2E_{2}\cdot\mathbf{d}_{\Omega}\mathbf{d}_{\Omega-1}^{-1}$.
The occurrence $\Omega=3$ means that two terms in $\hat{\chi}(\mathcal{I}\mid_{\alpha\beta}^{ij})$ are linearly proportional. To unify the notation, we still denote by $\varrho=0$, respectively $\varrho=1$, the choice $\Psi(E_{2})=\Psi(E_{3})$, respectively $\Psi(E_{2})=\Psi(E_{1})$. Each of the two choices $u\in\{1,3\}$ allows us to uniquely recover the constant $c$ from $B(E_{1},c\cdot E_{u},E_{3})=D$. 

Possible ambiguity between the two classes may arise, as shown in Example
\ref{exa: class-2 D first occurrence}. Thus, when $\#\mathrm{Supp}(D)>2$, the possible configurations follow from: i. the knowledge of the class; ii. the choice in $\{\varrho,1-\varrho\}$ defining $E_{2}$ ($\varrho\in[0,1]$); iii. the choice of the square root of $B_{\alpha\beta}^{ij}$; iv. the ordering of $\hat{\chi}(\mathcal{I}\mid_{\alpha\beta}^{ij})$ defining $(E_{1},E_{2},E_{3})$. 

When $\#\mathrm{Supp}(D)=2$, the possible configurations for $Y_{\alpha\beta}^{ij}$ generated by $D$ come from the following choices: i. the choice of the square root of the $B$-term 
ii. the constant $c\in\{1,-1\}$ mentioned in Lemma \ref{lem: binomial D}; iii. with reference to the same lemma, 
the choice of $\mathbf{e}\in\mathrm{Supp}(D)$ such that $\mathbf{e}=E_{u}=c\cdot E_{w}$, which extends the definition of $\varrho$ already specified at $\#\mathrm{Supp}D>0$; iv. the ordering $(E_{1},E_{2},E_{3})$. There are two choices for both the root of the $B$-term and the identification of the independent term in $\mathrm{Supp}(D)$; at $c=+1$ there are only three distinct configurations for $(E_{1},E_{2},E_{3})$ associated with the cyclic subgroup of $\mathcal{S}_{3}$, while there are six configurations at $c=-1$, namely the full $\mathcal{S}_{3}$ group. 

Finally, let us denote by $N_{D}$ the number of distinct forms 
for radical terms. We suppose that there exists a radical term $Y_{\alpha\beta}^{ij}$. From Lemma \ref{lem: observable sets with 0 are algebraic}, all sets $\chi(\mathcal{I}\mid_{\alpha\gamma}^{ij})$ and $\chi(\mathcal{I}\mid_{\beta\gamma}^{ij})$ satisfy (\ref{eq: all 3 terms non-vanishing}). Therefore, for each $\gamma\in\mathcal{I}^{\mathtt{C}}$, at least one between $Y(\mathcal{I}_{\alpha\gamma}^{ij})\notin \mathbb{F}$ and $Y(\mathcal{I}_{\beta\gamma}^{ij})\notin \mathbb{F}$ holds. If $n-k > 2 N_{D}+2$, Dirichlet's box principle would imply that one between $\alpha$ and $\beta$, say $\alpha$ with appropriate labelling, satisfies $Y_{\alpha\gamma_{u}}^{ij}\notin \mathbb{F}$ for $N_{D}+1$ distinct indices $\gamma_{u}\in \mathcal{I}^{\mathtt{C}}$. From the previous argument, a second application of Dirichlet's box principle lets us infer that there exist two indices, say $\gamma_{p}$ and $\gamma_{q}$, such that $Y_{\alpha\gamma_{p}}^{ij}=Y_{\alpha\gamma_{q}}^{ij}$. The identity (\ref{eq: associativity, d}) implies $Y_{\beta_{p}\beta_{q}}^{ij}=-1$, which contradicts Remark \ref{rem: no Y=-1}, so $n-k\leq 2 N_{D}+2$. 
\end{proof}
The proof of Theorem \ref{thm: distinguishability terms from Y} highlights the multiplicity sources for sets $\hat{\chi}(\mathcal{I}\mid_{\alpha\beta}^{ij})$ that are compatible with a given $D$, which add up to the ordering of monomials that defines the triple
 \begin{equation}
\overrightarrow{\chi}(\mathcal{I}\mid_{\alpha\beta}^{ij}):=(G_{\alpha\beta}^{ij})^{-1}\cdot\left(h(\mathcal{I})\cdot h(\mathcal{I}_{\alpha\beta}^{ij}),h(\mathcal{I}_{\alpha}^{i})\cdot h(\mathcal{I}_{\beta}^{j}),h(\mathcal{I}_{\beta}^{i})\cdot h(\mathcal{I}_{\alpha}^{j})\right).\label{eq: three-set triple}
 \end{equation}
We briefly discuss the situation where $\Psi(D)$ does not span a $3$-dimensional sublattice of $\mathbb{Z}^{d}$ and  
$\Psi(\hat{B}_{\alpha\beta}^{ij})$ 
can be expressed using the univariate notation (\ref{eq: homogeneous with respect to clustering}), which also applies when two terms in $\overrightarrow{\chi}(\mathcal{I}\mid_{\alpha\beta}^{ij})$ are proportional
over $\mathbb{C}$. 
\begin{rem}
\label{rem: if proportional, then unique constant ratio } 
For a $G$-type configuration $(E_1,E_2,E_3)=(\mathrm{r},c_{1}\cdot\mathrm{r},q_{1})$ with 
$c_{1}\in\mathbb{C}$, this constant can only assume values from a set $\{\kappa,\kappa^{-1}\}$ that is uniquely defined knowing $\hat{B}_{\alpha\beta}^{ij}$. The two choices correspond to different labels for the two proportional terms $E_1,E_2$. Different configurations for $\overrightarrow{\chi}(\mathcal{I}\mid_{\alpha\beta}^{ij})$ are described by permutations of two triples $(\mathrm{r},c_{1}\cdot\mathrm{r},q_{1})$ and $(\mathrm{r},c_{2},q_{2})$ such that $B(\mathrm{r},c_{1}\cdot\mathrm{r},q_{1})$ is equal to $B(\mathrm{r},c_{2},q_{2})$, up to units. In turn, both $(\mathrm{r},c_{2},q_{2})$ and $q_{1}\mathrm{r}^{-1}\cdot(\mathrm{r},c_{2},q_{2})=(q_{1},c_{2}q_{1}\mathrm{r}^{-1},q_{2}q_{1}\mathrm{r}^{-1})$ return the same $B$-term, up to units. Therefore, the ratio $c_{2}/q_{2}=(c_{2}q_{1}\mathrm{r}^{-1})/(q_{2}q_{1}\mathrm{r}^{-1})$ in the triple $(\mathrm{r},c_{2},q_{2})$ belongs to $\{\kappa,\kappa^{-1}\}$ as well. The explicit sets returning the same $B$-term are 
 \begin{equation}
\mathrm{C}_{1}:=\left\{ \mathrm{r},\kappa\cdot g,g\right\} ,\quad\mathrm{C}_{2}:=\left\{ (1-\kappa)\cdot g,\frac{\kappa\mathrm{r}}{1-\kappa},\frac{\mathrm{r}}{1-\kappa}\right\} .\label{eq: two configurations for proportional}
 \end{equation}
We also note that the set $\mathrm{C}_{2}$ can be obtained from $\mathrm{C}_{1}$ starting from 
(\ref{eq: associativity, d}): two $Y$-terms generated by the same set $\mathrm{C}_{1}$ but different roots of $\mathrm{B}$ in (\ref{eq: radical expression}) produce a term derived from $\mathrm{C}_{2}$, that is,  
 \begin{equation}
    \frac{\kappa g-g-\mathrm{r}+\sqrt{\mathrm{B}}}{2g}\cdot\frac{g-\kappa g-\mathrm{r}-\sqrt{\mathrm{B}}}{2\mathrm{r}}=\frac{\kappa-1}{2\mathrm{r}}\cdot\left((1-\kappa) g-\frac{\mathrm{r}}{1-\kappa}-\frac{\kappa\mathrm{r}}{1-\kappa}-\sqrt{\mathrm{B}}\right)
    \label{eq: intertwined configurations}
 \end{equation}
For $S$-type configurations, Proposition \ref{prop: factor for square and GCD} and Corollary \ref{cor: class-2 D, variables and independence} imply that $B_{\alpha\beta}^{ij}$ is squarefree in $\mathbb{C}(\mathbf{t})$. We can write $D=e_{1}^{2}+e_{2}^{2}$ with linearly independent monomials $e_{1},e_{2}$: as in the proof of Theorem \ref{thm: distinguishability terms from Y}, the components of $\overrightarrow{\chi}(\mathcal{I}\mid_{\alpha\beta}^{ij})=:(E_{1},E_{2},E_{3})$ are defined by the constant $c_{S}\in\{1,-1\}$ in Lemma \ref{lem: binomial D} 
and the choice of $p\in [2]$ in the relation $\Psi(e_{p})=\Psi(E_{u})=\Psi(E_{w})$ for two distinct $u,w\in[3]$. This corresponds to the choices shown in Table \ref{tab: type-S possible configuration sets}. 
\begin{table}
\begin{centering}
\begin{tabular}{|c|c|c|}
\hline 
$\hat{\chi}(\mathcal{I}\mid_{\alpha\beta}^{ij})$ & $c_{S}=-1$ & $c_{S}=1$\tabularnewline
\hline 
\hline 
$p=1$ & $\left\{ \frac{1}{2}e_{1},-\frac{1}{2}e_{1},e_{2}\right\} $ & $\left\{ \frac{1}{4}e_{1}^{2},\frac{1}{4}e_{1}^{2},-e_{2}^{2}\right\} $\tabularnewline
\hline 
$p=2$ & $\left\{ e_{1},\frac{1}{2}e_{2},-\frac{1}{2}e_{2}\right\} $ & $\left\{ -e_{1}^{2},\frac{1}{4}e_{2}^{2},\frac{1}{4}e_{2}^{2}\right\} $\tabularnewline
\hline 
\end{tabular}
\par\end{centering}
\centering{}\caption{\label{tab: type-S possible configuration sets}Possible sets $\hat{\chi}(\mathcal{I}\mid_{\alpha\beta}^{ij})$
that can be derived by $D=e_{1}^{2}+e_{2}^{2}$.}
\end{table}
In conclusion, we can consider the variables $\mathrm{r}:=e_{2}^{-1}e_{1}$ or $\mathrm{r}^{-1}$ in analogy to (\ref{eq: homogeneous with respect to clustering}). In this way, we can unify $G$-type and $S$-type factors at $c_{S}=-1$ noting that the latter configuration is a special case of the former, where $g=\frac{1}{2}$ and $\kappa=-1$ in (\ref{eq: two configurations for proportional}). 
\end{rem}

\section{\label{sec: proof of theorem} Proof of the main result and dimensional bounds} 


This section explores the constrained form of radical terms derived in Theorem \ref{thm: distinguishability terms from Y} together with the relation (\ref{eq: associativity, d}) and the condition in Remark \ref{rem: no Y=-1}. The details of this section improve the qualitative results of Theorem \ref{thm: distinguishability terms from Y} with respect to the dimensional constraints in the presence of radical terms. A deeper analysis of the implications of (\ref{eq: associativity, d}) provides quantitative bounds and leads to a better understanding of the possible monomial combinations. 

\begin{lem}
\label{lem: constant Y-terms in radical triples} 
Let $Y_{\alpha_{1}\alpha_{2}}^{ij}$ and $Y_{\alpha_{1}\alpha_{3}}^{ij}$ be radical, while $Y_{\alpha_{2}\alpha_{3}}^{ij}\in\mathbb{F}$. Then $Y_{\alpha_{2}\alpha_{3}}^{ij}\in\mathbb{C}$ and, for both $w\in\{2,3\}$, $\overrightarrow{\chi}(\mathcal{I}\mid_{\alpha_{1}\alpha_{w}}^{ij})$ contains at least two proportional, but not equal, terms.
\end{lem}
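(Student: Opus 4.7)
The plan is to exploit the associativity identity (\ref{eq: associativity, d}), rewrite the forced rationality of $Y_{\alpha_2\alpha_3}^{ij}$ as an algebraic identity among $A$- and $Q$-terms via the common squarefree part $D$ of Proposition \ref{prop: same radical for all starting sets}, and then conclude by a case analysis on the vanishing of $h(\mathcal{I}_{\alpha_2\alpha_3}^{ij})$. From (\ref{eq: associativity, d}), $Y_{\alpha_2\alpha_3}^{ij}=-Y_{\alpha_1\alpha_3}^{ij}/Y_{\alpha_1\alpha_2}^{ij}$, where both factors lie in $\mathbb{F}(\sqrt{D})\setminus\mathbb{F}$ by Lemma \ref{lem: common squarefree}. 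Expanding them via (\ref{eq: radical expression}) and rationalizing the ratio by the $\mathbb{F}$-conjugate of its denominator, the hypothesis $Y_{\alpha_2\alpha_3}^{ij}\in\mathbb{F}$ forces the $\sqrt{D}$-component to vanish, yielding $\varepsilon_{3}Q_{13}A_{12}=\varepsilon_{2}Q_{12}A_{13}$, equivalently $A_{1w}=\varepsilon_{w,0}\,\mu\,Q_{1w}$ for a common $\mu\in\mathbb{F}$ and signs $\varepsilon_{w,0}\in\{\pm1\}$. Substituting back produces the explicit formula $Y_{\alpha_2\alpha_3}^{ij}=-\varepsilon_{2}\varepsilon_{3}\,Q_{13}\,h(\mathcal{I}_{\alpha_2}^{i})/(Q_{12}\,h(\mathcal{I}_{\alpha_3}^{i}))$, while combining $A_{1w}^{2}=\mu^{2}Q_{1w}^{2}$ with the definitions of $A_{1w}$, $B_{1w}$ and with $B_{1w}=Q_{1w}^{2}D$ gives the auxiliary identity $Q_{1w}^{2}(\mu^{2}-D)=4\,h(\mathcal{I}_{\alpha_1}^{i})\,h(\mathcal{I}_{\alpha_1}^{j})\,h(\mathcal{I}_{\alpha_w}^{i})\,h(\mathcal{I}_{\alpha_w}^{j})$, whose ratio at $w=3$ over $w=2$ eliminates $\mu^{2}-D$.

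To conclude $Y_{\alpha_2\alpha_3}^{ij}\in\mathbb{C}$, I distinguish two subcases based on whether $h(\mathcal{I}_{\alpha_2\alpha_3}^{ij})$ vanishes. Lemma \ref{lem: observable sets with 0 are algebraic} applied to the radicality of $Y_{\alpha_1\alpha_2}^{ij}$ and $Y_{\alpha_1\alpha_3}^{ij}$ already gives $h(\mathcal{I})\neq 0$ and $h(\mathcal{I}_{\alpha_w}^{i}),\,h(\mathcal{I}_{\alpha_w}^{j})\neq 0$ for $w\in\{2,3\}$, so $h(\mathcal{I}_{\alpha_2\alpha_3}^{ij})\neq 0$ is equivalent to (\ref{eq: all 3 terms non-vanishing}) holding for $\chi(\mathcal{I}\mid_{\alpha_2\alpha_3}^{ij})$; in that case, $Y_{\alpha_2\alpha_3}^{ij}\in\mathbb{F}$ together with (\ref{eq: radical expression}) force $B_{\alpha_2\alpha_3}^{ij}$ to be a perfect square in $\mathbb{C}(\mathbf{t})$, and Lemma \ref{lem: case B^(ij)_(ab) square} delivers the conclusion. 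The opposite subcase will be excluded by contradiction: substituting $h(\mathcal{I}_{\alpha_2\alpha_3}^{ij})=0$ into (\ref{eq: quartic from Grassmann-Plucker, 2}) and cancelling the factor $(1+Y_{\alpha_2\alpha_3}^{ij})$ via Remark \ref{rem: no Y=-1} yields $Y_{\alpha_2\alpha_3}^{ij}=-P/Q$ with $P:=h(\mathcal{I}_{\alpha_2}^{i})\,h(\mathcal{I}_{\alpha_3}^{j})$ and $Q:=h(\mathcal{I}_{\alpha_3}^{i})\,h(\mathcal{I}_{\alpha_2}^{j})$, whereas squaring the explicit formula obtained above and applying the auxiliary identity returns $(Y_{\alpha_2\alpha_3}^{ij})^{2}=P/Q$. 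The system $(P/Q)^{2}=P/Q$ with $P,Q\neq 0$ forces $P=Q$, hence $Y_{\alpha_2\alpha_3}^{ij}=-1$, contradicting Remark \ref{rem: no Y=-1}.

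For the second assertion, the key input is the normalized identity $\hat{A}_{1w}=\pm\mu\,\hat{Q}_{1w}$ obtained from $A_{1w}=\varepsilon_{w,0}\mu\,Q_{1w}$ after dividing by the monomial $G_{\alpha_1\alpha_w}^{ij}$ of (\ref{eq: greatest common divisor}). Since each $\hat{E}_{w,j}$ in $\overrightarrow{\chi}(\mathcal{I}\mid_{\alpha_1\alpha_w}^{ij})$ is a monomial, the left-hand side $\hat{A}_{1w}=\hat{E}_{w,1}-\hat{E}_{w,2}-\hat{E}_{w,3}$ has sparsity at most three, whereas the right-hand side has sparsity controlled by $\hat{Q}_{1w}$ (a monomial in the class-$\mathrm{I}$ case and the binomial (\ref{eq: quadratic factor in resonance}) in the class-$\mathrm{II}$ case) together with the sparsity of $\mu$ bounded by the support analysis of Proposition \ref{prop: factor for square and GCD}. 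Comparing these bounds, in the spirit of Remark \ref{rem: if proportional, then unique constant ratio }, forces at least two of the $\hat{E}_{w,j}$ to share the same $\mathbf{t}$-support, that is, to be $\mathbb{C}^{\ast}$-proportional; equality of these two would collapse $\hat{B}_{1w}=\hat{Q}_{1w}^{2}D$ into a form forcing $D\in\mathbb{C}$, incompatible with radicality of $Y_{\alpha_1\alpha_w}^{ij}$, so the proportionality ratio must be different from $1$. The main anticipated obstacle is this last sparsity-based classification, since radicality excludes any direct appeal to Lemma \ref{lem: case B^(ij)_(ab) square} and the proportionality must be extracted from the strictly weaker identity $A_{1w}^{2}=\mu^{2}Q_{1w}^{2}$ via the full classification of admissible radical configurations from Definition \ref{def: type and class of radical terms}.
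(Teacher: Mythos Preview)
Your derivation of $Y_{\alpha_2\alpha_3}^{ij}\in\mathbb{C}$ is sound, and the identity $\varepsilon_3 Q_{13}A_{12}=\varepsilon_2 Q_{12}A_{13}$ is correct. Note, however, that your Case~2 is vacuous: Lemma~\ref{lem: observable sets with 0 are algebraic} applied to the radical term $Y_{\alpha_2\alpha_1}^{ij}=(Y_{\alpha_1\alpha_2}^{ij})^{-1}$ with $\gamma=\alpha_3$ already yields $0\notin\chi(\mathcal{I}\mid_{\alpha_2\alpha_3}^{ij})$, which is precisely what the paper invokes.

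The argument for the second assertion has two genuine gaps. First, the sparsity comparison does not establish proportionality. For a \emph{fixed} $w$ the relation $\hat{A}_{1w}=\pm\mu\,\hat{Q}_{1w}$ is just the definition of $\mu$ and places no constraint on the internal structure of $\overrightarrow{\chi}(\mathcal{I}\mid_{\alpha_1\alpha_w}^{ij})$; the actual content of your identity is that the \emph{same} $\mu$ works for $w=2$ and $w=3$, but you never explain how this cross-constraint forces two components of a single triple to share support. Proposition~\ref{prop: factor for square and GCD} bounds the sparsity of $\hat{Q}_{1w}$, not of $\mu$, and in class~$\mathrm{I}$ the factor $\hat{Q}_{1w}$ is a unit, so $\mathrm{sparsity}(\mu)=\mathrm{sparsity}(\hat{A}_{1w})\leq 3$ is all you recover, which is vacuous. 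The paper proceeds differently: from $Y_{\alpha_1\alpha_3}^{ij}=\kappa\,Y_{\alpha_1\alpha_2}^{ij}$ with $\kappa\in\mathbb{F}$ it rescales so that $\hat{B}_{\alpha_1\alpha_2}^{ij}=\hat{B}_{\alpha_1\alpha_3}^{ij}$, then invokes the unique recoverability of a pairwise-independent monomial triple from its $B$-term (Theorem~\ref{thm: distinguishability terms from Y}, Remark~\ref{rem: if proportional, then unique constant ratio }) to force $Y_{\alpha_1\alpha_2}^{ij}=Y_{\alpha_1\alpha_3}^{ij}$, contradicting Remark~\ref{rem: no Y=-1}. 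Second, your claim that equality of two components forces $D\in\mathbb{C}$ is false: if $E_2=E_3$ then $B(E_1,E_2,E_2)=E_1^{2}-4E_1E_2$, which is an $S$-type configuration with $c_S=1$ (Table~\ref{tab: type-S possible configuration sets}) and is genuinely radical. The paper excludes $c_S=1$ by a separate argument, tracking the factors of the $A$-terms under the basis changes $\mathcal{I}\mapsto\mathcal{I}_{\alpha_2}^{i}$ and $\mathcal{I}\mapsto\mathcal{I}_{\alpha_2}^{j}$ and comparing with Table~\ref{tab: type-S possible configuration sets}.
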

\begin{proof}
Say $\kappa:=-Y_{\alpha_{2}\alpha_{3}}^{ij}\in\mathbb{F}$. Lemma \ref{lem: observable sets with 0 are algebraic} implies that $\chi(\mathcal{I}\mid_{\alpha_{u}\alpha_{w}}^{ij})$ satisfies (\ref{eq: all 3 terms non-vanishing}) for all $1\leq u < w \leq 3$.
The proportionality of $Y_{\alpha_{1}\alpha_{2}}^{ij}$ and $Y_{\alpha_{1}\alpha_{3}}^{ij}$ 
means that the corresponding $B$-terms have the same factors in $\mathbb{C}(\mathbf{t})$, and an appropriate choice of the scaling (\ref{eq: normalised three-term set}) for the elements in $\chi(\mathcal{I}\mid_{\alpha_{1}\alpha_{2}}^{ij})$ lets us fix $\hat{B}_{\alpha_{1}\alpha_{2}}^{ij}=\hat{B}_{\alpha_{1}\alpha_{3}}^{ij}=:\mathrm{B}$ without affecting the associated $Y$-terms. When $\hat{\chi}(\mathcal{I}\mid_{\alpha_{1}\alpha_{2}}^{ij})$ contains three pairwise independent elements, they can be recovered uniquely from $\mathrm{B}$ as discussed in Remark \ref{rem: if proportional, then unique constant ratio }. This forces $Y_{\alpha_{1}\alpha_{2}}^{ij}=Y_{\alpha_{1}\alpha_{3}}^{ij}$, in contradiction to Remark \ref{rem: no Y=-1}. Therefore, each of the sets $\chi(\mathcal{I}\mid_{\alpha_{1}\alpha_{2}}^{ij})$ and $\chi(\mathcal{I}\mid_{\alpha_{1}\alpha_{3}}^{ij})$ contains two proportional terms.
A similar argument holds for $S$-type configurations: $A_{\alpha_{1}\alpha_{2}}^{ij}$ and $A_{\alpha_{1}\alpha_{3}}^{ij}$ must have the same factors even under the changes of basis $\mathcal{I}\mapsto\mathcal{I}_{\alpha_{2}}^{i}$ and $\mathcal{I}\mapsto\mathcal{I}_{\alpha_{2}}^{j}$, and they are not invertible in $\mathbb{C}(\mathbf{t})$ for one such choice of basis; from Table \ref{tab: type-S possible configuration sets}, we see that a configuration with $c_{S}=1$ is compatible with these conditions only if $Y_{\alpha_{1}\alpha_{2}}^{ij}=Y_{\alpha_{1}\alpha_{3}}^{ij}$, at odds with Remark \ref{rem: no Y=-1}. 
\end{proof}
\begin{rem}
\label{rem: permutation group representation and scalars}
From Remark \ref{rem: transformations and permutation group}, we get six transformations acting on $\hat{\chi}(\mathcal{I}\mid_{\alpha_{2}\alpha_{3}}^{ij})$, which form a group under composition that is isomorphic to $\mathcal{S}_{3}$. Their action on $\kappa$ is defined by (\ref{eq: Grassmann-Plucker translation exchange, vertical}) and (\ref{eq: Grassmann-Plucker translation exchange, diagonal}), which act as $-\kappa\mapsto\kappa-1$ and $-\kappa\mapsto (\kappa^{-1}-1)^{-1}$, respectively, together with their reciprocals. So we get a finite set of allowed values for $Y_{\alpha_{2}\alpha_{3}}^{ij}$ that is closed under the transformation rules (\ref{eq: Grassmann-Plucker translation exchange, vertical}) and (\ref{eq: Grassmann-Plucker translation exchange, diagonal}), that is, under the action of functions $f_{h}$ and $f_{v}$ introduced in Remark \ref{rem: transformations and permutation group}. When $D$ is of $S$-type, this reduces the possible values for $Y_{\alpha_{2}\alpha_{3}}^{ij}$ to the set $\{-\frac{1}{2},1,-2\}$, in line with allowed expressions in Table \ref{tab: type-S possible configuration sets}. 
\end{rem}

\begin{lem}
\label{lem: concurrence of two classes} The only form for $D$ that is compatible with both classes $\mathrm{I}$ and $\mathrm{II}$ is (\ref{eq: class-2 D first occurrence}). In particular, all radical terms have the same class when $D$ does not have this form. 
\end{lem}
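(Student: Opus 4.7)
The plan is to exploit the structural rigidity of class-II configurations encoded in Corollary \ref{cor: class-2 D, variables and independence} and Proposition \ref{prop: class-2, recover hat-B from D}, followed by a support-cardinality comparison and, in one borderline case, a direct coefficient match. Assume $D$ admits both a class-I configuration (so $\hat{B}^{\mathrm{I}} = D$) and a class-II configuration with resonance parameters $\varrho_{0} = 1/K$, $\varrho = L/K$ where $K \geq 2$, $\gcd(K,L) = 1$, and $1 \leq L \leq K-1$. Corollary \ref{cor: class-2 D, variables and independence} forces $\Psi(D)$ into a $1$-dimensional affine sublattice of $\mathbb{Z}^{d}$, so by Remark \ref{rem: invertible change of indeterminates} I may treat $D$ as a univariate polynomial in a single variable $\mathrm{r}$. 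After the normalization adopted in Proposition \ref{prop: class-2, recover hat-B from D}, the class-II triple takes the form $(c_{1}\mathrm{r}^{K}, c_{2}\mathrm{r}^{L}, 1)$ with $c_{1} = (\varrho^{-1}-1)^{-2}$ and $c_{2}$ fixed by the resonance identity (\ref{eq: condition monomials resonance in B}), while $\hat{Q}$ becomes a constant multiple of $\mathrm{r}-1$; hence $D = \hat{B}_{\mathrm{II}}/(\mathrm{r}-1)^{2}$ is a polynomial of $\mathrm{r}$-degree $2K-2$.

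The case $K = 2$ (so $L = 1$) is immediate: direct division yields $\hat{B}_{\mathrm{II}} = \mathrm{r}^{4} - 8\mathrm{r}^{3} + 14\mathrm{r}^{2} - 8\mathrm{r} + 1 = (\mathrm{r}-1)^{2}(\mathrm{r}^{2} - 6\mathrm{r} + 1)$, so $D$ is proportional to $\mathrm{r}^{2} - 6\mathrm{r} + 1$, exactly the form of (\ref{eq: class-2 D first occurrence}); the triples in Example \ref{exa: class-2 D first occurrence} already realize this $D$ in both classes. To exclude $K \geq 3$, I would observe that any class-I triple $(T_{0}, T_{1}, T_{2})$ satisfies $D = \sigma_{1}^{2} - 4\sigma_{2}$, where $\sigma_{1}, \sigma_{2}$ are the first two elementary symmetric functions. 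The $1$-dimensionality of $\Psi(D)$ forces $T_{i} = t_{i}\mathrm{r}^{u_{i}}$ with $t_{i} \in \mathbb{C}^{\times}$ and integer $u_{i}$, and matching the extreme $\mathrm{r}$-exponents of $D$ gives $u_{1} = 0$ and $u_{3} = K-1$. Thus $\mathrm{Supp}(D_{\mathrm{I}}) \subseteq \{0, u_{2}, 2u_{2}, K-1, K-1+u_{2}, 2K-2\}$ has cardinality at most $6$.

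For $K \geq 4$ I would establish that $\#\mathrm{Supp}(D) = 2K-1 \geq 7$ by running the division recursion $b_{k} = a_{k} - b_{k-2} + 2 b_{k-1}$ (extracting the coefficients $b_{k}$ of $D$ from those $a_{k}$ of $\hat{B}_{\mathrm{II}}$) and verifying that no intermediate $b_{k}$ vanishes; this alone contradicts the class-I bound of $6$. For $K = 3$ the support has exactly $5$ elements and is compatible with $\mathrm{Supp}(D_{\mathrm{I}})$ only at the non-degenerate weight $u_{2} = 1$ (where the coincidence $2u_{2} = K-1$ reduces the six candidate exponents to five), so I would match coefficients directly. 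The constraints $t_{3}^{2} = b_{4}$, $t_{2}t_{3} = -b_{3}/2$, $t_{1}t_{2} = -b_{1}/2$, $t_{1}^{2} = b_{0}$, together with the middle equation $b_{2} = t_{2}^{2} - 2 t_{1}t_{3}$, collapse to the polynomial identity $(b_{1}b_{3} - 8 b_{0}b_{4})^{2} = 16 b_{2}^{2} b_{0} b_{4}$; substituting the coefficients computed for $L \in \{1, 2\}$ (with $c_{1} \in \{1/4, 4\}$, $c_{2} \in \{9/4, 9\}$) yields mismatches such as $169/256 \neq 225/256$ for $L = 1$ and $43264 \neq 57600$ for $L = 2$, supplying the contradiction. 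The main obstacle is proving the uniform non-vanishing $b_{k} \neq 0$ for $K \geq 4$ across all admissible $L$; I expect this to follow from an induction on $k$ driven by the boundary conditions $\hat{B}_{\mathrm{II}}(1) = \hat{B}_{\mathrm{II}}'(1) = 0$ and the sign pattern forced by the resonance-determined constants $c_{1}, c_{2}$, possibly requiring a finer case analysis when $L$ or $K - L$ is small.
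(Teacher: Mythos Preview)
Your approach for $K=2$ and $K=3$ is sound, but the $K\geq 4$ case has a genuine gap that you yourself flag: you need $\#\mathrm{Supp}(D)\geq 7$, and your proposed route (showing all $2K-1$ quotient coefficients $b_k$ are nonzero) is exactly the obstacle you have not resolved. The recursion $b_k=a_k-b_{k-2}+2b_{k-1}$ mixes terms of $\hat B_{\mathrm{II}}$ with alternating signs and the resonance constants $c_1,c_2$ in a way that makes uniform non-vanishing across all $(K,L)$ genuinely delicate; there is no obvious monotonicity or sign pattern to exploit, and the ``finer case analysis'' you anticipate can proliferate.

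The paper avoids this entirely by reversing the direction of the computation and adding one structural input you do not use. First, the minimal-gap analysis already carried out in Proposition~\ref{prop: class-2, recover hat-B from D} forces the top and bottom gaps of $\Psi(D)$ to equal $1$ unless both coincidence conditions (\ref{eq: coincidence of left-gaps})--(\ref{eq: coincidence of right-gaps}) hold (which gives the $K=2$ case). For the class-$\mathrm{I}$ triple $(\mathrm{r}^{p},c_1'\mathrm{r}^{q},c_2')$ this means the bottom gap $q$ or the top gap $p-q$ equals $1$, i.e.\ $q\in\{1,p-1\}$; in your notation this pins $u_2\in\{1,K-2\}$. Second, with $q=1$ the paper \emph{multiplies}: it writes $D=B(\mathrm{r}^{\omega},c_1\mathrm{r},c_2)$ explicitly (six terms at most) and expands $(\mathrm{r}-1)^2\cdot D$. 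This product has at most a dozen candidate monomials, and counting which can cancel is a finite, transparent check for each regime $\omega=3,4$ and $\omega\geq 5$; the case $\omega=2$ (your $K=3$) is a single direct verification. Multiplication by a known binomial is far easier to control than division, and the prior restriction on $q$ is what makes the product's support small enough for a clean case split. Incorporating that gap constraint on $u_2$ would also rescue your strategy, but at that point you are essentially reproducing the paper's argument.
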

\begin{proof}
Let $D$ be a squarefree polynomial in $\mathbb{C}(\mathbf{t})$ such that there exist three units $e_{u}\in\mathbb{C}(\mathbf{t})$, $u\in[3]$, satisfying $B(e_{1},e_{2},e_{3})=D$. This is a class-$\mathrm{I}$ configuration, and we look for a non-invertible polynomial $Q_{\alpha\beta}^{ij}\in\mathbb{C}(\mathbf{t})$ such that $D\cdot(Q_{\alpha\beta}^{ij})^{2}$ is an allowed class-$\mathrm{II}$ $B$-term. 
We exploit the homogeneity property discussed in Corollary \ref{cor: class-2 D, variables and independence} and the proof of Proposition \ref{prop: class-2, recover hat-B from D} to infer the existence of a unit $\mathrm{r}\in\mathbb{C}(\mathbf{t})$ such that $Q_{\alpha\beta}^{ij}=\mathrm{r}-1$ and $D$ is a univariate polynomial in $\mathrm{r}$, up to units. With appropriate labelling, we can set $\Psi(e_{3})=\mathbf{0}$, $\Psi(e_{2})=q\cdot\Psi(\mathrm{r})$, and $\Psi(e_{1})=p\cdot\Psi(\mathrm{r})$ where $p,q\in\mathbb{N}$ and $q< p$. 

We focus on the cases where (\ref{eq: coincidence of left-gaps}) and (\ref{eq: coincidence of right-gaps}) do not simultaneously hold, otherwise we recover Example \ref{exa: class-2 D first occurrence} as in the proof of Proposition \ref{prop: class-2, recover hat-B from D}. So, $q\in\{1,p-1\}$, say $q=1$ without loss of generality, then we prove that 
$(\mathrm{r}-1)^{2}\cdot D$, where $D=B(\mathrm{r}^{\omega},c_{1}\mathrm{r},c_{2})$ for some $\omega\in\mathbb{N}$, has a sparsity strictly greater than $6$ at $\omega\geq 3$, so it cannot be represented as a $B$-term. This claim is easily verified at $\omega\in\{3,4\}$, where (\ref{eq: coincidence of left-gaps}) or (\ref{eq: coincidence of right-gaps}) are violated only if $c_{1}=-c_{2}$, hence we get 
\begin{equation*}
(\mathrm{r}-1)^{2}\cdot B(\mathrm{r}^{3},c_{1}\mathrm{r},-c_{1})
= c_{1}^{2}-2c_{1}^{2}\mathrm{r}^{2}+2c_{1}\mathrm{r}^{3}+c_{1}(c_{1}-6)\mathrm{r}^{4}+6c_{1}\mathrm{r}^{5}+(1-2c_{1})\mathrm{r}^{6}-2\mathrm{r}^{7}+\mathrm{r}^{8}
\end{equation*}
and 
\begin{equation*} 
(\mathrm{r}-1)^{2}\cdot B(\mathrm{r}^{4},c_{1}\mathrm{r},-c_{1})
= c_{1}^{2}-2c_{1}^{2}\mathrm{r}^{2}+c_{1}(c_{1}+2)\mathrm{r}^{4}-6c_{1}\mathrm{r}^{5}+6c_{1}\mathrm{r}^{6}-2c_{1}\mathrm{r}^{7}+\mathrm{r}^{8}-2\mathrm{r}^{9}+\mathrm{r}^{10}.
\end{equation*}
At most one coefficient can vanish in these expressions, so the sparsity is at least $7$. At $\omega > 4$, we get 
\begin{eqnarray*}
& & (\mathrm{r}-1)^{2}\cdot B(\mathrm{r}^{\omega},c_{1}\mathrm{r},c_{2}) \nonumber \\ 
& = & c_{2}^{2}-2c_{2}(c_{1}+c_{2})\mathrm{r}+(c_{1}^{2}+4c_{1}c_{2}+c_{2}^{2})\mathrm{r}^{2}-2c_{1}(c_{1}+c_{2})\mathrm{r}^{3}+c_{1}^{2}\mathrm{r}^{4}-2c_{2}\mathrm{r}^{\omega}\\
& + & 2(2c_{2}-c_{1})\cdot\mathrm{r}^{\omega+1}+2(2c_{1}-c_{2})\cdot\mathrm{r}^{\omega+2}-2c_{1}\mathrm{r}^{\omega+3}+\mathrm{r}^{2\omega}-2\mathrm{r}^{2\omega+1}+\mathrm{r}^{2\omega+2}
\end{eqnarray*}
which has a sparsity at least $7$. The remaining exponent $\omega=2$ returns $(\mathrm{r}-1)^{2}\cdot B(\mathrm{r}^{2},c_{1}\mathrm{r},c_{2})$, which does not reproduce a $B$-term as is easily checked. Therefore, when $D$ is not of the form (\ref{eq: class-2 D first occurrence}) we can recover the class of any radical term, and all radical terms have the same class. 
\end{proof}
We can now analyse the additional constraints on $Y_{\alpha\beta}^{ij}\in\mathbb{C}\setminus\{0\}$ generated by the existence of radical terms $Y_{\beta\gamma}^{ij}$, $\gamma \in \mathcal{I}^{\mathtt{C}}$. Prior to that, we introduce the notation that will be used in the rest of this section. 
\begin{defn}
\label{def: proportionality position and symbols}
When two components of $\overrightarrow{\chi}(\mathcal{I}\mid_{\alpha\beta}^{ij})$ are proportional, we denote by $\Lambda_{\alpha\beta}^{ij}\in[3]$ the position of the unique independent component in the triple $\overrightarrow{\chi}(\mathcal{I}\mid_{\alpha\beta}^{ij})$ when $Y_{\alpha\beta}^{ij}\notin\mathbb{F}$; we extend this definition setting $\Lambda_{\alpha\beta}^{ij}=0$ to indicate the condition $Y_{\alpha\beta}^{ij}\in\mathbb{C}$. Furthermore, we introduce the symbols
\begin{equation}
 h(\mathcal{I}_{\delta_{0}}^{i})h(\mathcal{I}_{\delta_{1}}^{j})=:c_{\delta_{0}\delta_{1}}\cdot h(\mathcal{I}_{\delta_{1}}^{i})h(\mathcal{I}_{\delta_{0}}^{j}),\quad \mathcal{I}_{\alpha_{w}}^{m}\in\mathfrak{G}(\mathbf{L}(\mathbf{1})),\,w\in\{0,1\},\,m\in\{i,j\}.
\label{eq: associativity for common positions}
\end{equation}
The consistency with this definition implies $c_{\delta_{1}\delta_{1}}=1$ and $c_{\delta_{1}\delta_{2}}=c_{\delta_{1}\delta_{0}}c_{\delta_{0}\delta_{2}}$ for all indices $\delta_{0},\delta_{1},\delta_{2}$. 
\end{defn}
\begin{prop}
\label{prop: existence proportional but non-equal terms} 
When $n-k\geq5$, for all $\mathcal{I}\in\mathfrak{G}(\mathbf{L}(\mathbf{t}))$, $i,j\in\mathcal{I}$, and $\alpha,\beta\in\mathcal{I}^{\mathtt{C}}$, the triple  $\overrightarrow{\chi}(\mathcal{I}\mid_{\alpha\beta}^{ij})$ returning a radical term $Y(\mathcal{I})_{\alpha\beta}^{ij}$ contains two proportional but not equal components. 
\end{prop}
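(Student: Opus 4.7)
The plan is to argue by contradiction: suppose $Y(\mathcal{I})_{\alpha\beta}^{ij}$ is radical while no two components of $\overrightarrow{\chi}(\mathcal{I}\mid_{\alpha\beta}^{ij})$ are proportional over $\mathbb{C}$ with a non-unit ratio (equivalently, $\Lambda_{\alpha\beta}^{ij}$ is undefined in the sense of Definition~\ref{def: proportionality position and symbols}). Since $n-k\geq 5$, at least three distinct indices $\gamma_{1},\gamma_{2},\gamma_{3}$ can be extracted from $\mathcal{I}^{\mathtt{C}}\setminus\{\alpha,\beta\}$, and by Lemma~\ref{lem: observable sets with 0 are algebraic} all the observables needed below are well-defined.

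The first step is to show that, for every $t\in[3]$, both $Y_{\alpha\gamma_{t}}^{ij}$ and $Y_{\gamma_{t}\beta}^{ij}$ are radical. Indeed, the quadrilateral decomposition (\ref{eq: quadrilateral decomposition}) gives $Y_{\alpha\beta}^{ij}=-Y_{\alpha\gamma_{t}}^{ij}\cdot Y_{\gamma_{t}\beta}^{ij}$. Since $Y_{\alpha\beta}^{ij}\notin\mathbb{F}$, both factors cannot lie in $\mathbb{F}$. If exactly one of them were in $\mathbb{F}$, say $Y_{\gamma_{t}\beta}^{ij}$, then the pair of radical terms $Y_{\alpha\beta}^{ij},Y_{\alpha\gamma_{t}}^{ij}$ share the index $\alpha$ with the constant $Y_{\beta\gamma_{t}}^{ij}$, so Lemma~\ref{lem: constant Y-terms in radical triples} would force $\overrightarrow{\chi}(\mathcal{I}\mid_{\alpha\beta}^{ij})$ to contain two proportional but non-equal components, contradicting the working hypothesis; the symmetric case $Y_{\alpha\gamma_{t}}^{ij}\in\mathbb{F}$ is ruled out analogously.

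The second step is to transfer the "generic" character of $\overrightarrow{\chi}(\mathcal{I}\mid_{\alpha\beta}^{ij})$ to the neighbouring triples $\overrightarrow{\chi}(\mathcal{I}\mid_{\alpha\gamma_{t}}^{ij})$ and $\overrightarrow{\chi}(\mathcal{I}\mid_{\beta\gamma_{t}}^{ij})$. Here I would use the local transformations (\ref{eq: Grassmann-Plucker translation exchange, vertical})-(\ref{eq: Grassmann-Plucker translation exchange, diagonal}) and the associativity identity (\ref{eq: associativity for common positions}) to connect the factorisation patterns of neighbouring triples: a proportionality in $\overrightarrow{\chi}(\mathcal{I}\mid_{\alpha\gamma_{t}}^{ij})$ either propagates to a proportionality in $\overrightarrow{\chi}(\mathcal{I}\mid_{\alpha\beta}^{ij})$ (contradicting the assumption) or forces $Y_{\beta\gamma_{t}}^{ij}$ to take one of the constant values $\{-\tfrac{1}{2},1,-2\}$ listed in Remark~\ref{rem: permutation group representation and scalars}, which in turn contradicts the radicality of $Y_{\beta\gamma_{t}}^{ij}$ proved in the first step.

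The third step is a Dirichlet-type collision argument. Proposition~\ref{prop: same radical for all starting sets} fixes the squarefree part $D$ shared by every radical $Y$-term; once the triples $\overrightarrow{\chi}(\mathcal{I}\mid_{\alpha\gamma_{t}}^{ij})$ are known to be generic, Theorem~\ref{thm: distinguishability terms from Y} and Proposition~\ref{prop: class-2, recover hat-B from D} imply that their admissible values form a finite set whose cardinality I would explicitly bound by enumerating class (I vs.\ II), sign of $\sqrt{B}$, and ordering of $\hat{\chi}$ through the $\mathcal{S}_{3}$-action of $f_{h},f_{v}$ from Remark~\ref{rem: transformations and permutation group}. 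On the other hand, (\ref{eq: associativity, d}) combined with Remark~\ref{rem: no Y=-1} forces the three values $Y_{\alpha\gamma_{1}}^{ij},Y_{\alpha\gamma_{2}}^{ij},Y_{\alpha\gamma_{3}}^{ij}$ to be pairwise distinct, since any equality $Y_{\alpha\gamma_{s}}^{ij}=Y_{\alpha\gamma_{t}}^{ij}$ would entail $Y_{\gamma_{s}\gamma_{t}}^{ij}=-1$. The hard part will be the second step: disentangling which positions in $\overrightarrow{\chi}(\mathcal{I}\mid_{\alpha\gamma_{t}}^{ij})$ inherit (or break) the genericity of $\overrightarrow{\chi}(\mathcal{I}\mid_{\alpha\beta}^{ij})$ requires careful bookkeeping across the six local transformations and the configurations collected in Remark~\ref{rem: if proportional, then unique constant ratio } and Table~\ref{tab: type-S possible configuration sets}, so that the Dirichlet count in the third step has a clean enough bound to conflict with the existence of three distinct indices $\gamma_{t}$.
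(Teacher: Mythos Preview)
Your first step is sound and matches the paper: the contrapositive of Lemma~\ref{lem: constant Y-terms in radical triples} does force all $Y_{\alpha\gamma_{t}}^{ij}$ and $Y_{\gamma_{t}\beta}^{ij}$ to be radical once $\overrightarrow{\chi}(\mathcal{I}\mid_{\alpha\beta}^{ij})$ violates the thesis.

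The gap is in your second step. The dichotomy you propose is false: a proportionality in $\overrightarrow{\chi}(\mathcal{I}\mid_{\alpha\gamma_{t}}^{ij})$ need not propagate back to $\overrightarrow{\chi}(\mathcal{I}\mid_{\alpha\beta}^{ij})$, nor does it force $Y_{\beta\gamma_{t}}^{ij}\in\mathbb{C}$. The paper exhibits exactly this situation: with $\overrightarrow{\chi}(\mathcal{I}\mid_{\alpha_{1}\alpha_{2}}^{ij})=(\mathrm{r}^{2},4\mathrm{r},1)$ generic (three pairwise independent monomials), the neighbouring triples in (\ref{eq: G-type extremal combination}) are $(\mathrm{r},2\mathrm{r},1)$ and $(\mathrm{r},2,1)$, each with two proportional-but-unequal components, while all $Y$-terms remain radical. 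The values $\{-\tfrac{1}{2},1,-2\}$ you cite from Remark~\ref{rem: permutation group representation and scalars} are the allowed \emph{constant} $Y$-values in $S$-type, not a constraint that the presence of a proportionality forces on a radical neighbour. So genericity does not propagate, and your step~2 cannot be made to work as stated.

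What the paper actually does in place of your step~2 is a classification of the \emph{non}-generic neighbours. First it treats the case where all $\hat{\chi}(\mathcal{I}\mid_{\alpha_{u}\alpha_{w}}^{ij})$ coincide, showing via the cyclic product (\ref{eq: cyclic product}) that $Y_{\alpha\gamma}^{ij}Y_{\gamma\beta}^{ij}Y_{\beta\alpha}^{ij}$ then equals $E_{s}E_{t}^{-1}\neq -1$, a contradiction. This forces different $\hat{\chi}$-sets to appear, and the key input---which your sketch does not invoke---is Lemma~\ref{lem: concurrence of two classes}: under (\ref{eq: concordance phases and minors cardinalities}) the coexistence of distinct configurations pins $D$ down to (\ref{eq: class-2 D first occurrence}) in the $G$-type case, yielding exactly the two ordered triples in (\ref{eq: G-type extremal combination}) (and analogously (\ref{eq: S-type extremal combination}) for $S$-type). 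Only then does the pigeonhole at $n-k\geq 5$ give two indices $\gamma_{1},\gamma_{2}$ with $\overrightarrow{\chi}(\mathcal{I}\mid_{\alpha_{1}\gamma_{1}}^{ij})=\overrightarrow{\chi}(\mathcal{I}\mid_{\alpha_{1}\gamma_{2}}^{ij})$; but equality of $\hat\chi$-sets does not immediately yield $Y_{\gamma_{1}\gamma_{2}}^{ij}=-1$ (the two $Y$-terms may be conjugate over $\mathbb{F}$), and the paper closes with a separate computation ruling out that conjugate branch. Your step~3 anticipates pigeonhole but, lacking the cyclic-product reduction, Lemma~\ref{lem: concurrence of two classes}, and the conjugate endgame, does not assemble into a proof.
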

\begin{proof}
Let $\overrightarrow{\chi}(\mathcal{I}\mid_{\alpha_{1}\alpha_{2}}^{ij})=:(E_{1},E_{2},E_{3})$ violate the thesis, that is, it contains pairwise independent monomials, or two components coincide. Being $Y_{\alpha_{1}\alpha_{2}}^{ij}\notin\mathbb{F}$, this is summarised in the following condition  
 \begin{equation}
\#\Psi(\hat{\chi}(\mathcal{I}\mid_{\alpha_{1}\alpha_{2}}^{ij}))=\#\mathrm{Supp}(\hat{\chi}(\mathcal{I}\mid_{\alpha_{1}\alpha_{2}}^{ij}))>1
\label{eq: concordance phases and minors cardinalities}
 \end{equation}
where, in this case, $\#\mathrm{Supp}(\hat{\chi}(\mathcal{I}\mid_{\alpha_{1}\alpha_{2}}^{ij}))$ denotes the support, i.e. the underlying set of the multiset $\hat{\chi}(\mathcal{I}\mid_{\alpha_{1}\alpha_{2}}^{ij})$. By contraposition of Lemma \ref{lem: constant Y-terms in radical triples}, (\ref{eq: concordance phases and minors cardinalities}) implies $Y_{\alpha_{1}\alpha_{3}}^{ij},Y_{\alpha_{3}\alpha_{2}}^{ij}\notin\mathbb{C}$ for all $\alpha_{3}\in \mathcal{I}^{\mathtt{C}}$. Together with the assumption $Y_{\alpha_{1}\alpha_{2}}^{ij}\notin\mathbb{F}$, we also infer $Y_{\alpha_{1}\alpha_{3}}^{ij},Y_{\alpha_{3}\alpha_{2}}^{ij}\notin\mathbb{F}$, by Lemma \ref{lem: observable sets with 0 are algebraic}. 

First, we look at configurations where $\hat{\chi}(\mathcal{I}\mid_{\alpha_{u}\alpha_{w}}^{ij})$ is the same for all $u<w$, which implies $\hat{B}_{\alpha_{u}\alpha_{w}}^{ij}=\hat{B}_{\alpha_{1}\alpha_{2}}^{ij}=:\mathrm{B}$. The only configuration that returns $Y_{\alpha_{1}\alpha_{2}}^{ij} \cdot Y_{\alpha_{2}\alpha_{3}}^{ij}\cdot Y_{\alpha_{3}\alpha_{1}}^{ij}\in\mathbb{F}$,
as required by (\ref{eq: associativity, d}), is 
 \begin{equation}
 \frac{E_{1}-E_{2}-E_{3}+\sqrt{\mathrm{B}}}{2E_{x}}\cdot\frac{E_{2}-E_{1}-E_{3} +\sqrt{\mathrm{B}}}{2E_{y}}\cdot\frac{E_{3}-E_{1}-E_{2}+\sqrt{\mathrm{B}}}{2E_{z}} 
= \frac{E_{1}E_{2}E_{3}}{E_{x}E_{y}E_{z}}.
\label{eq: cyclic product}
 \end{equation}
Indeed, at least two $Y$-terms of the form  $Y_{\alpha_{u}\alpha_{w}}^{ij}$, say  $Y_{\alpha_{1}\alpha_{2}}^{ij}$ and $Y_{\alpha_{2}\alpha_{3}}^{ij}$, involve the same square root of $\mathrm{B}$, by Dirichlet's box principle; on the other hand, $A_{\alpha_{1}\alpha_{2}}^{ij}$ and $A_{\alpha_{2}\alpha_{3}}^{ij}$ cannot have the same factors in $\mathbb{C}(\mathbf{t})$, unless they are invertible, otherwise $B_{\alpha_{1}\alpha_{3}}^{ij}$ would not coincide with $\mathrm{B}$ up to units. Under the hypothesis (\ref{eq: concordance phases and minors cardinalities}), this forces the form (\ref{eq: cyclic product}), possibly with $E_{u}=E_{w}$ for some $u,w\in[3]$. The conditions $E_{x}\in\{E_{2},E_{3}\}$, $E_{y}\in\{E_{1},E_{3}\}$,
and $E_{z}\in\{E_{1},E_{2}\}$ imply $\frac{E_{1}E_{2}E_{3}}{E_{x}E_{y}E_{z}}=E_{s}E_{t}^{-1}$ for some $s,t\in[3]$, so this ratio never equals $-1$ when (\ref{eq: concordance phases and minors cardinalities}) holds. 

From the previous argument, we infer that multiple configurations for $\hat{\chi}$-sets
are involved, which allows us to move to univariate polynomials in $\mathbb{C}(\mathrm{r})$, according to Remarks \ref{rem: homogeneous coordinate} and \ref{rem: if proportional, then unique constant ratio }, with an appropriate choice of the unit $\mathrm{r}$. For $G$-type configurations satisfying (\ref{eq: concordance phases and minors cardinalities}), multiple $\hat{\chi}$-sets can arise when both classes $\mathrm{I}$ and $\mathrm{II}$ appear: from Lemma \ref{lem: concurrence of two classes}, this can only be achieved by (\ref{eq: class-2 D first occurrence}), so we choose the form $\overrightarrow{\chi}(\mathcal{I}\mid_{\alpha_{1}\alpha_{2}}^{ij})=(\mathrm{r}^{2},4\mathrm{r},1)$ and get $c_{\alpha_{1}\alpha_{u}}\cdot c_{\alpha_{u}\alpha_{2}}=c_{\alpha_{1}\alpha_{2}}=4\cdot\mathrm{r}$. We infer $\{c_{\alpha_{1}\alpha_{u}},c_{\alpha_{u}\alpha_{2}}\}=\{2\cdot\mathrm{r},2\}$, 
since the alternative $\{\mathrm{r}^{2},4\cdot\mathrm{r}^{-1}\}$ returns (\ref{eq: cyclic product}), and this set can only be generated by 
\begin{equation} 
    \left\{\overrightarrow{\chi}(\mathcal{I}\mid_{\alpha_{1}\alpha_{3}}^{ij}),\overrightarrow{\chi}(\mathcal{I}\mid_{\alpha_{3}\alpha_{2}}^{ij})\right\}=\left\{(\mathrm{r},2\cdot\mathrm{r},1),(\mathrm{r},2,1)\right\}. 
    \label{eq: G-type extremal combination}
\end{equation}
since $\left\{(\mathrm{r},2\cdot\mathrm{r},1),(1,2\cdot \mathrm{r},\mathrm{r})\right\}$ returns (\ref{eq: cyclic product}) as well. 
For $S$-type configurations, we choose $\overrightarrow{\chi}(\mathcal{I}\mid_{\alpha_{1}\alpha_{2}}^{ij})$ $=(-\frac{1}{4},\mathrm{r}^{2},-\frac{1}{4})$, so we have $c_{\alpha_{1}\alpha_{2}}=-4\cdot\mathrm{r}^{2}$: excluding $\{c_{\alpha_{1}\alpha_{3}},c_{\alpha_{3}\alpha_{2}}\}=\{-4\mathrm{r^{2}},1\}$, which returns (\ref{eq: cyclic product}), we infer $\{c_{\alpha_{1}\alpha_{3}},c_{\alpha_{3}\alpha_{2}}\}=\{2\cdot\mathrm{r},-2\cdot\mathrm{r}\}$. These values are generated by 
\begin{equation} 
    \left\{\overrightarrow{\chi}(\mathcal{I}\mid_{\alpha_{1}\alpha_{3}}^{ij}),\overrightarrow{\chi}(\mathcal{I}\mid_{\alpha_{3}\alpha_{2}}^{ij})\right\}=\left\{\left(\frac{1}{2},\mathrm{r},-\frac{1}{2}\right),\left(-\frac{1}{2},\mathrm{r},\frac{1}{2}\right)\right\}.
    \label{eq: S-type extremal combination}
\end{equation}
In both cases, at $n-k\geq 5$ we can invoke Dirichlet's box principle to identify two indices $\gamma_{1},\gamma_{2}\in (\mathcal{I}_{\alpha_{1}\alpha_{2}})^{\mathtt{C}}$ such that $\overrightarrow{\chi}(\mathcal{I}\mid_{\alpha_{1}\gamma_{1}}^{ij})=\overrightarrow{\chi}(\mathcal{I}\mid_{\alpha_{1}\gamma_{2}}^{ij})$; the only possibility to make them different, according to Remark \ref{rem: no Y=-1}, is that $h(\mathcal{I}_{\gamma_{1}}^{i})Y_{\alpha_{1}\gamma_{1}}^{ij}$ and $h(\mathcal{I}_{\gamma_{2}}^{i})Y_{\alpha_{1}\gamma_{2}}^{ij}$ are conjugate over $\mathbb{F}$, hence 
\begin{equation*} 
Y_{\gamma_{1}\gamma_{2}}^{ij} = -Y_{\gamma_{1}\alpha_{1}}^{ij}\cdot Y_{\alpha_{1}\gamma_{2}}^{ij} = \frac{h(\mathcal{I}_{\gamma_{2}}^{i})h(\mathcal{I}_{\alpha_{1}}^{j})}{h(\mathcal{I}_{\gamma_{1}}^{j})h(\mathcal{I}_{\alpha_{1}}^{i})} \cdot (Y_{\alpha_{1}\gamma_{2}}^{ij})^2.
\end{equation*} 
This expression for $Y_{\gamma_{1}\gamma_{2}}^{ij}$ is not compatible with $Y_{\alpha_{1}\gamma_{2}}^{ij}$ as derived from one of the triples in (\ref{eq: G-type extremal combination}) or (\ref{eq: S-type extremal combination}), since $B_{\gamma_{1}\gamma_{2}}^{ij}$ has neither the same factors as $D$, nor it is consistent with (\ref{eq: class-2 B first occurrence}). Thus, $n-k\leq 4$. 
\end{proof}
Configurations allowing both class-$\mathrm{I}$ and class-$\mathrm{II}$ terms exist at $n-k\leq4$, e.g. 
\begin{equation*}
    \frac{-\mathrm{r}-1+\sqrt{D}}{2}\cdot\frac{\mathrm{r}-3-\sqrt{D}}{2}=-\frac{\mathrm{r}^{2}-4\mathrm{r}-1+(\mathrm{r}-1)\cdot\sqrt{D}}{2}.
\end{equation*}
\begin{cor}
\label{cor: only one constant factor per Y-term} For each radical $Y_{\alpha\beta}^{ij}$, there is at most one index $\gamma$ with $Y_{\beta\gamma}^{ij}\in\mathbb{C}$. 
\end{cor}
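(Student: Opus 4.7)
The plan is to argue by contradiction. Suppose two distinct indices $\gamma_{1},\gamma_{2}\in\mathcal{I}^{\mathtt{C}}\setminus\{\beta\}$, necessarily both different from $\alpha$ (otherwise $Y_{\beta\alpha}^{ij}=(Y_{\alpha\beta}^{ij})^{-1}$ would be constant, against the radicality hypothesis), satisfy $Y_{\beta\gamma_{u}}^{ij}\in\mathbb{C}$ for $u\in\{1,2\}$. Applying (\ref{eq: associativity, d}) yields $Y_{\alpha\gamma_{u}}^{ij}=-Y_{\alpha\beta}^{ij}\cdot Y_{\beta\gamma_{u}}^{ij}$, a nonzero scalar multiple of a radical element and hence itself radical, together with $Y_{\gamma_{1}\gamma_{2}}^{ij}=-Y_{\beta\gamma_{2}}^{ij}/Y_{\beta\gamma_{1}}^{ij}\in\mathbb{C}$. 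Remark \ref{rem: no Y=-1} then forces $Y_{\beta\gamma_{1}}^{ij}\neq Y_{\beta\gamma_{2}}^{ij}$, as otherwise $Y_{\gamma_{1}\gamma_{2}}^{ij}=-1$. In particular, both triples $(\alpha,\beta,\gamma_{u})$ and $(\alpha,\gamma_{1},\gamma_{2})$ fall under the hypotheses of Lemma \ref{lem: constant Y-terms in radical triples}, while Proposition \ref{prop: same radical for all starting sets} guarantees that $Y_{\alpha\beta}^{ij},Y_{\alpha\gamma_{1}}^{ij},Y_{\alpha\gamma_{2}}^{ij}$ share a common squarefree part $D$.

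I would next split the analysis according to the type of $D$. The clean case is $S$-type: Remark \ref{rem: permutation group representation and scalars} restricts any constant $Y$-term produced by a radical triple to the three-element set $\{-\tfrac{1}{2},1,-2\}$, closed under the local transformations $f_{h},f_{v}$. Since all three of $Y_{\beta\gamma_{1}}^{ij}$, $Y_{\beta\gamma_{2}}^{ij}$, $Y_{\gamma_{1}\gamma_{2}}^{ij}$ arise in this way, they must lie in this set. A direct enumeration of the six ordered pairs of distinct values for $(Y_{\beta\gamma_{1}}^{ij},Y_{\beta\gamma_{2}}^{ij})$ shows that $-Y_{\beta\gamma_{2}}^{ij}/Y_{\beta\gamma_{1}}^{ij}$ always falls in $\{2,\tfrac{1}{2},-4,-\tfrac{1}{4}\}$, disjoint from $\{-\tfrac{1}{2},1,-2\}$, which is the required contradiction.

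For $G$-type, I would follow the template of the closing argument of Proposition \ref{prop: existence proportional but non-equal terms}. The observation is that, since $Y_{\alpha\beta}^{ij}, Y_{\alpha\gamma_{1}}^{ij}, Y_{\alpha\gamma_{2}}^{ij}$ are $\mathbb{C}$-scalar multiples of the same radical element, their $B$-terms coincide up to squares of constants, so Proposition \ref{prop: existence proportional but non-equal terms} combined with the rigid classification in Remark \ref{rem: if proportional, then unique constant ratio } (via the two admissible configurations $\mathrm{C}_{1},\mathrm{C}_{2}$ of (\ref{eq: two configurations for proportional})) shrinks the admissible $\overrightarrow{\chi}(\mathcal{I}\mid_{\alpha\gamma_{u}}^{ij})$ to finitely many templates depending only on $\kappa$. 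I would then reconcile the $\varepsilon$-signs in (\ref{eq: radical expression}) across the three triples under the scaling constraints $Y_{\alpha\gamma_{u}}^{ij}=c_{u}Y_{\alpha\beta}^{ij}$, concluding that $\overrightarrow{\chi}(\mathcal{I}\mid_{\alpha\gamma_{1}}^{ij})$ and $\overrightarrow{\chi}(\mathcal{I}\mid_{\alpha\gamma_{2}}^{ij})$ must coincide with matching signs. This forces $Y_{\alpha\gamma_{1}}^{ij}=Y_{\alpha\gamma_{2}}^{ij}$ and hence $Y_{\gamma_{1}\gamma_{2}}^{ij}=-1$ via (\ref{eq: associativity, d}), again contradicting Remark \ref{rem: no Y=-1}.

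The main obstacle is the $G$-type case: while $S$-type reduces to a finite, explicit enumeration, the $G$-type step requires tracking which of $\mathrm{C}_{1}$ or $\mathrm{C}_{2}$ realises each $\overrightarrow{\chi}(\mathcal{I}\mid_{\alpha\gamma_{u}}^{ij})$ and handling the class-$\mathrm{I}$/class-$\mathrm{II}$ interchange that Lemma \ref{lem: concurrence of two classes} permits for the exceptional $B$-term in (\ref{eq: class-2 B first occurrence}); the delicate verification is that the sign compatibility excludes all mixed configurations, leaving only the coincidence that produces $Y_{\gamma_{1}\gamma_{2}}^{ij}=-1$.
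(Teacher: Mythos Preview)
Your overall architecture---contradiction via two indices $\gamma_{1},\gamma_{2}$, then a type split---matches the paper, and your $S$-type argument is essentially identical to theirs: they phrase it as ``there exist no $a,b,c\in\{-\tfrac12,1,-2\}$ with $ab=-c$'', which is exactly your enumeration.

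The $G$-type case is where your proposal remains a sketch, and you correctly flag it as the obstacle. The paper dispatches it by a normalisation you do not mention: after invoking Lemma~\ref{lem: constant Y-terms in radical triples} to get two proportional components in $\overrightarrow{\chi}(\mathcal{I}\mid_{\alpha\beta}^{ij})$, they pass to a basis $\mathcal{J}\in\{\mathcal{I},\mathcal{I}_{\beta}^{i},\mathcal{I}_{\beta}^{j}\}$ so that the \emph{central} term becomes the unique one, i.e.\ $\Lambda_{\alpha\beta}^{ij}=1$. They then show this propagates: $c_{\alpha\gamma_{u}}=c_{\alpha\beta}\cdot c_{\beta\gamma_{u}}\in\mathbb{C}$ forces $\Lambda_{\alpha\gamma_{u}}^{ij}=1$ as well. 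With the unique term pinned to the central slot, the only remaining freedom is $\mathrm{C}_{1}$ versus $\mathrm{C}_{2}$, and the corresponding $A$-terms are $\mathrm{r}-(\kappa+1)g$ and $(1+\kappa)\mathrm{r}-(1-\kappa)^{2}g$; since $\kappa\neq-1$ in $G$-type these do not share factors, so $\hat{\chi}(\mathcal{I}\mid_{\alpha\gamma_{1}}^{ij})=\hat{\chi}(\mathcal{I}\mid_{\alpha\gamma_{2}}^{ij})$, forcing $Y_{\beta\gamma_{1}}^{ij}=Y_{\beta\gamma_{2}}^{ij}$ and the contradiction with Remark~\ref{rem: no Y=-1}. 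Your plan of ``reconciling $\varepsilon$-signs across templates'' would eventually work, but without the $\Lambda=1$ normalisation you face six ordered configurations rather than two, and the sign bookkeeping is genuinely messier.

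Two smaller points. First, your worry about the class-$\mathrm{I}$/class-$\mathrm{II}$ interchange is a red herring here: Lemma~\ref{lem: constant Y-terms in radical triples} already produces two proportional components, which places every relevant triple in the $\mathrm{C}_{1}$/$\mathrm{C}_{2}$ framework of (\ref{eq: two configurations for proportional}); the paper never invokes Lemma~\ref{lem: concurrence of two classes} in this proof. Second, you cite Proposition~\ref{prop: existence proportional but non-equal terms}, which carries the hypothesis $n-k\geq5$; the paper avoids this and relies only on Lemma~\ref{lem: constant Y-terms in radical triples}, so Corollary~\ref{cor: only one constant factor per Y-term} holds without the dimensional restriction.
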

\begin{proof}
Let $Y_{\alpha_{1}\alpha_{2}}^{ij}$ be radical and take $\alpha_{3}$ such that $\kappa:=-Y_{\alpha_{2}\alpha_{3}}^{ij}$ is not radical. All sets $\chi(\mathcal{I}\mid_{\alpha_{u}\alpha_{w}}^{ij})$, $1\leq u<w\leq 3$, satisfy (\ref{eq: all 3 terms non-vanishing}) by Lemma \ref{lem: observable sets with 0 are algebraic}, then $\kappa\in\mathbb{C}$ by Lemma \ref{lem: case B^(ij)_(ab) square}, and we can find two proportional terms in $\hat{\chi}(\mathcal{I}\mid_{\alpha_{1}\alpha_{2}}^{ij})$ by Lemma \ref{lem: constant Y-terms in radical triples}.
Recalling the definitions of central and unique terms provided in Section \ref{subsec: Notation}, we can assume, moving to a basis $\mathcal{J}\in\{\mathcal{I},\mathcal{I}_{\alpha_{2}}^{i},\mathcal{I}_{\alpha_{2}}^{j}\}$ if necessary, that the central term in $\hat{\chi}(\mathcal{J}\mid_{\alpha_{1}\alpha_{2}}^{ij})$ is unique while preserving the relations $Y_{\alpha_{1}\alpha_{2}}^{ij}\notin \mathbb{F}$ and $Y_{\alpha_{2}\alpha_{3}}^{ij}\in\mathbb{C}$. With the notation (\ref{eq: associativity for common positions}), 
this means $c_{\alpha_{1}\alpha_{2}}\in\mathbb{C}$, while $c_{\alpha_{2}\alpha_{3}}\in\mathbb{C}$ by Lemma \ref{lem: case B^(ij)_(ab) square} applied to $\chi(\mathcal{I}\mid_{\alpha_{2}\alpha_{3}}^{ij})$; therefore, $c_{\alpha_{1}\alpha_{3}}=c_{\alpha_{1}\alpha_{2}}\cdot c_{\alpha_{2}\alpha_{3}}\in\mathbb{C}$ and, being $Y_{\alpha_{1}\alpha_{3}}^{ij}\notin\mathbb{F}$, we find that the central term in $\hat{\chi}(\mathcal{I}\mid_{\alpha_{1}\alpha_{3}}^{ij})$ is also unique, letting us state  
 \begin{equation}
    \Lambda_{\alpha_{1}\alpha_{2}}=1,\; Y_{\alpha_{2}\alpha_{3}}\in\mathbb{C} \quad \Rightarrow \quad 
    \Lambda_{\alpha_{1}\alpha_{3}}^{ij}=1. 
    \label{eq: proportionality preserves centering}
 \end{equation}
This generates two possible $G$-type configurations, namely the two sets $\mathrm{C}_{1}$ and $\mathrm{C}_{2}$ in (\ref{eq: two configurations for proportional}). The $A$-terms (\ref{eq: expression cross-section quadratic case, a}) corresponding to these two sets, according to the position of the central terms discussed above, are $\mathrm{r}-(\kappa+1)g$ and $(1+\kappa)\cdot\mathrm{r}-(1-\kappa)^{2}\cdot g$: being $\kappa \neq -1$ for $G$-type configurations, these polynomials do not have the same factors in $\mathbb{C}(\mathbf{t})$. Therefore, $\hat{\chi}(\mathcal{I}\mid_{\alpha_{1}\alpha_{2}}^{ij})=\hat{\chi}(\mathcal{I}\mid_{\alpha_{1}\alpha_{3}}^{ij})$ and there cannot be another index $\alpha_{4}\neq \alpha_{3}$ such that $Y_{\alpha_{2}\alpha_{4}}^{ij}\in\mathbb{C}$, as it would entail $Y_{\alpha_{2}\alpha_{3}}^{ij}=Y_{\alpha_{2}\alpha_{4}}^{ij}$  contradicting Remark \ref{rem: no Y=-1}. For $S$-type configurations, the possible values for $Y_{\alpha_{2}\alpha_{3}}^{ij}$ are in $\{-\frac{1}{2},1,-2\}$ by Remark \ref{rem: permutation group representation and scalars}. The existence of two distinct indices $\alpha_{3},\alpha_{4}$ such that $Y_{\alpha_{2}\alpha_{3}}^{ij},Y_{\alpha_{2}\alpha_{4}}^{ij}\in\mathbb{C}$ implies $Y_{\alpha_{3}\alpha_{4}}^{ij}\in\mathbb{C}$, and hence $Y_{\alpha_{2}\alpha_{3}}^{ij},Y_{\alpha_{2}\alpha_{4}}^{ij},Y_{\alpha_{3}\alpha_{4}}^{ij}\in\{-\frac{1}{2},1,-2\}$.
But $Y_{\alpha_{3}\alpha_{4}}^{ij}=-Y_{\alpha_{2}\alpha_{3}}^{ij}Y_{\alpha_{2}\alpha_{4}}^{ij}$ and there exist no elements $a,b,c\in\{-\frac{1}{2},1,-2\}$ such that $ab=-c$. 
\end{proof}

\begin{prop}
\label{prop: bound number of radicals}
If $\max\{k,n-k\}\geq 5$, no observable set generates a radical $Y$-term. 
\end{prop}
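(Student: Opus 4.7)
By Remark \ref{rem: dual Grassmannian representation}, the hypothesis $\max\{k,n-k\}\geq 5$ reduces without loss of generality to $n-k\geq 5$, and I plan to argue by contradiction: suppose some observable $Y(\mathcal{I})_{\alpha_1\alpha_2}^{ij}$ is radical, with $\mathcal{I}\in\mathfrak{G}(\mathbf{L}(\mathbf{1}))$, $i,j\in\mathcal{I}$, and $\alpha_1,\alpha_2\in\mathcal{I}^{\mathtt{C}}$. The strategy is to exhibit a large family of radical $Y$-terms $Y_{\alpha_2\gamma}^{ij}$ as $\gamma$ ranges over $\mathcal{I}^{\mathtt{C}}\setminus\{\alpha_2\}$, to constrain their configurations so tightly that Dirichlet's box principle forces two of them to share the same ordered triple $\overrightarrow{\chi}$, and finally to derive a contradiction from this coincidence.

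First, I would ensure that the family is large. By Lemma \ref{lem: observable sets with 0 are algebraic}, every set $\chi(\mathcal{I}\mid_{\alpha_2\gamma}^{ij})$ satisfies (\ref{eq: all 3 terms non-vanishing}), so Lemma \ref{lem: case B^(ij)_(ab) square} forces each $Y_{\alpha_2\gamma}^{ij}$ to be either in $\mathbb{C}$ or radical. Corollary \ref{cor: only one constant factor per Y-term} allows at most one constant value, so there are at least $n-k-2\geq 3$ radical $Y_{\alpha_2\gamma}^{ij}$'s. By Proposition \ref{prop: existence proportional but non-equal terms} each of the triples $\overrightarrow{\chi}(\mathcal{I}\mid_{\alpha_2\gamma}^{ij})$ contains two proportional but not equal components, and by Proposition \ref{prop: same radical for all starting sets} they share a common squarefree part $D$.

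Second, I would enumerate the admissible normalised triples $\hat{\chi}(\mathcal{I}\mid_{\alpha_2\gamma}^{ij})$. Remark \ref{rem: if proportional, then unique constant ratio } restricts the $G$-type case to one of the two families $\mathrm{C}_{1}$ or $\mathrm{C}_{2}$ of (\ref{eq: two configurations for proportional}), and the $c_{S}=-1$ rows of Table \ref{tab: type-S possible configuration sets} exhaust the $S$-type case. Combining these with the cocycle identity $c_{\alpha_2\gamma_1}\cdot c_{\gamma_1\gamma_2}=c_{\alpha_2\gamma_2}$ of Definition \ref{def: proportionality position and symbols} and the relations (\ref{eq: associativity, d})--(\ref{eq: associativity, u}), one can mimic the argument leading to (\ref{eq: G-type extremal combination}) and (\ref{eq: S-type extremal combination}) in the proof of Proposition \ref{prop: existence proportional but non-equal terms} to reduce the admissible ordered triples, for fixed $\alpha_2$, $i$, and $j$, to a short list containing at most two triples. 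Dirichlet's box principle applied to the at least three radical $Y_{\alpha_2\gamma}^{ij}$'s then produces two indices $\gamma_1,\gamma_2$ with $\hat{\chi}(\mathcal{I}\mid_{\alpha_2\gamma_1}^{ij})=\hat{\chi}(\mathcal{I}\mid_{\alpha_2\gamma_2}^{ij})$ as ordered triples.

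Third, I would exploit this coincidence. Because $Y$ depends only on the normalised triple, $Y_{\alpha_2\gamma_1}^{ij}$ and $Y_{\alpha_2\gamma_2}^{ij}$ are roots of the same normalised version of the quadratic (\ref{eq: polynomial h to Y}). If they coincide, the quadrilateral decomposition (\ref{eq: associativity, d}) yields $Y_{\gamma_1\gamma_2}^{ij}=-1$, contradicting Remark \ref{rem: no Y=-1}. Otherwise they are the two distinct Galois conjugates, so Vi\`ete's formulas combined with (\ref{eq: associativity, d}) express $Y_{\gamma_1\gamma_2}^{ij}$ as a rational multiple of $(Y_{\alpha_2\gamma_2}^{ij})^{2}$; direct expansion, paralleling the identity (\ref{eq: intertwined configurations}), shows that the squarefree part of the resulting $B_{\gamma_1\gamma_2}^{ij}$ does not coincide with $D$, contradicting Proposition \ref{prop: same radical for all starting sets}. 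The step that I expect to be the main obstacle is the explicit enumeration of admissible ordered triples in step two: once the simplifying hypothesis (\ref{eq: concordance phases and minors cardinalities}) is dropped, candidate configurations multiply, and the cocycle identity for the constants $c_{\alpha_2\gamma}$ must be combined carefully with the transformation rules (\ref{eq: Grassmann-Plucker translation exchange, vertical})--(\ref{eq: Grassmann-Plucker translation exchange, diagonal}) to obtain the bound by two that makes Dirichlet applicable at $n-k\geq 5$.
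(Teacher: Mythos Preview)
Your outline has the right shape but contains a genuine gap in step~3, and step~2 is not carried out (as you yourself flag).

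In step~3 you assert that when $Y_{\alpha_2\gamma_1}^{ij}$ and $Y_{\alpha_2\gamma_2}^{ij}$ are the two Galois conjugates of the common quadratic, the squarefree part of $B_{\gamma_1\gamma_2}^{ij}$ fails to equal $D$. This is false in general: identity (\ref{eq: intertwined configurations}) shows precisely that the product of the two conjugate $Y$-terms coming from the set $\mathrm{C}_1$ in (\ref{eq: two configurations for proportional}) is again a radical $Y$-term, namely one coming from $\mathrm{C}_2$, with the \emph{same} squarefree part $D$. So the conjugate case does not immediately contradict Proposition~\ref{prop: same radical for all starting sets}; it merely moves you between the two admissible configurations. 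The argument you are ``paralleling'' from Proposition~\ref{prop: existence proportional but non-equal terms} worked there only because the extra hypothesis (\ref{eq: concordance phases and minors cardinalities}) ruled out the two-proportional-terms situation in which (\ref{eq: intertwined configurations}) applies; here that hypothesis is violated by design.

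The paper's proof avoids this trap by a different organising device. After a local basis change it arranges that the \emph{central} component of $\overrightarrow{\chi}(\mathcal{I}\mid_{\alpha_1\alpha_2}^{ij})$ is the unique one, i.e.\ $\Lambda_{\alpha_1\alpha_2}^{ij}=1$ in the notation of Definition~\ref{def: proportionality position and symbols}. The cocycle relation for the constants $c_{\alpha\beta}$ then forces $\Lambda_{\alpha_u\gamma}^{ij}\in\{2,3\}$ for every radical intermediate $\gamma$, and the corresponding $A$-terms fall into exactly two non-invertible polynomials $F_{(+1)},F_{(-1)}$ (indexed by which power of $\kappa$ appears as the proportionality ratio). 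One shows $\{\hat{A}_{\alpha_1\gamma}^{ij},\hat{A}_{\gamma\alpha_2}^{ij}\}=\{F_{(+1)},F_{(-1)}\}$ for each such $\gamma$, and that two radical $\gamma_1,\gamma_2$ landing in the same $F_{(\pm1)}$-class force $Y_{\gamma_1\gamma_2}^{ij}\in\mathbb{C}$; this constant then has to lie simultaneously in $\{\kappa^u-1,(\kappa^u-1)^{-1}\}$ for both $u=\pm1$, which together with the residual constraint $\kappa\in\{\tfrac12,-1,2\}$ is impossible. A separate compatibility check against the at most two constant indices supplied by Corollary~\ref{cor: only one constant factor per Y-term} shows these are mutually exclusive with a pair of radical $\gamma$'s, leaving at most two extra indices in $\mathcal{I}^{\mathtt{C}}$ and hence $n-k\leq4$. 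The key idea you are missing is this positional normalisation $\Lambda_{\alpha_1\alpha_2}^{ij}=1$ and the resulting $F_{(+1)}/F_{(-1)}$ dichotomy, which replaces your attempted enumeration of ordered triples by a two-valued invariant whose interactions with the constants $\kappa$ can be resolved algebraically.
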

\begin{proof}
We can assume $n-k\geq5$ without loss of generality, as noted in Remark \ref{rem: dual Grassmannian representation}. By Proposition \ref{prop: existence proportional but non-equal terms}, all observable sets contain two proportional monomials with ratio $\kappa\in\mathbb{C}\setminus\{0,1\}$, so there are no class-$\mathrm{II}$ terms; taking into account Remark \ref{rem: permutation group representation and scalars}, thus including $S$-type configurations as $\kappa=c_{S}=-1$, we find two non-invertible polynomials $F_{(+1)},F_{(-1)}\in\mathbb{C}[\mathbf{r}]$ defined at $u\in\{+1,-1\}$ and $\Lambda_{\alpha\beta}^{ij}\in\{2,3\}$ by 
\begin{equation}
   \hat{A}_{\alpha\beta}^{ij}=F_{(u)}\Leftrightarrow \left\{\frac{h(\mathcal{I}_{\alpha\beta}^{ij})\cdot h(\mathcal{I})}{h(\mathcal{I}_{\alpha}^{i})\cdot h(\mathcal{I}_{\beta}^{j})},\frac{h(\mathcal{I}_{\alpha\beta}^{ij})\cdot h(\mathcal{I})}{h(\mathcal{I}_{\alpha}^{j})\cdot h(\mathcal{I}_{\beta}^{i})}\right \}\cap\mathbb{C}=\{\kappa^{u}\}
\end{equation}
such that $\hat{A}_{\alpha\beta}^{ij}\in\{F_{(+1)},F_{(-1)}\}$ whether $\Lambda_{\alpha\beta}^{ij}\notin\{0,1\}$. 
These polynomials determine the behaviour of the corresponding $Y$-terms: for both $u\in\{+1,-1\}$, $\hat{A}_{\alpha\beta}^{ij}=F_{(u)}$ and $Y_{\alpha\gamma}^{ij}\in\mathbb{C}$ imply $Y_{\alpha\gamma}^{ij}=(\kappa^{u}-1)^{\sigma}$ for some $\sigma\in\{1,-1\}$. The coefficients introduced in (\ref{eq: associativity for common positions}) for such polynomials are $c_{\alpha\beta}\in\{c_{(+1)},c_{(-1)}\}$ where 
\begin{eqnarray}
c_{(+1)} & := & \mathrm{r}^{\sigma_{+1,1}}\cdot(1-\kappa)^{-\sigma_{+1,1}\cdot(1+\sigma_{+1,2})},\nonumber \\
c_{(1)} & := & (\mathrm{r}\cdot\kappa^{\sigma_{-1,2}})^{\sigma_{-1,1}}\cdot(1-\kappa)^{-\sigma_{-1,1}\cdot(1+\sigma_{-1,2})} 
\label{eq: coefficients for non-centered}
\end{eqnarray}
for some $\sigma_{u,1},\sigma_{u,2}\in\{1,-1\}$, $u\in\{+1,-1\}$, that depend on the set configuration ($\mathrm{C}_{1}$ or $\mathrm{C}_{2}$) and $\Lambda_{\alpha\beta}^{ij}$ (in $\{2,3\}$). 

Now, we fix any radical term and, choosing a basis $\mathcal{J}\in\{\mathcal{I},\mathcal{I}_{\alpha_{1}}^{i},\mathcal{I}_{\alpha_{1}}^{j}\}$, we assume $\Lambda_{\alpha_{1}\alpha_{2}}^{ij}=1$. When $Y_{\alpha_{1}\gamma}^{ij}$ and $Y_{\gamma\alpha_{2}}^{ij}$ are both radical, $\gamma\in\mathcal{I}^{\mathtt{C}}$, the conditions $c_{\alpha_{1}\alpha_{2}}\in\{\kappa,\kappa^{-1}\}$ (by Remark \ref{rem: if proportional, then unique constant ratio }), $\kappa\neq 1$ (by Proposition \ref{prop: existence proportional but non-equal terms}), and $c_{\alpha_{2}\alpha_{3}}=c_{\alpha_{2}\alpha_{1}}\cdot c_{\alpha_{1}\alpha_{3}}$ require at least one $u\in[2]$ such that $c_{\alpha_{u}\gamma}\notin\{\kappa,\kappa^{-1}\}$, hence $\Lambda_{\alpha_{u}\gamma}^{ij}\neq 1$. Being $\Lambda_{\alpha_{1}\gamma}^{ij}=1$ and $\Lambda_{\alpha_{2}\gamma}^{ij}=1$ equivalent when $\Lambda_{\alpha_{1}\alpha_{2}}^{ij}=1$, we conclude $\Lambda_{\alpha_{1}\gamma}^{ij},\Lambda_{\gamma\alpha_{2}}^{ij}\in\{2,3\}$. On the other hand, $A_{\alpha_{1}\gamma}^{ij}$ and $A_{\gamma\alpha_{2}}^{ij}$ do not share the same factors, otherwise the relation $Y_{\alpha_{1}\alpha_{2}}^{ij}=-Y_{\alpha_{1}\gamma}^{ij}\cdot Y_{\gamma\alpha_{2}}^{ij}$ would generate different factors in $B_{\alpha_{1}\alpha_{2}}^{ij}$ and $B_{\alpha_{1}\gamma}^{ij}$. This argument forces 
\begin{equation}
\left\{\hat{A}_{\alpha_{1}\gamma}^{ij},\hat{A}_{\gamma\alpha_{2}}^{ij}\right\}=\left\{F_{(+1)},F_{(-1)}\right\}.
\label{eq: covering F-factors}
\end{equation}  
Considering (\ref{eq: coefficients for non-centered}), this condition allows us to express $c_{\alpha_{1}\alpha_{2}}=c_{\alpha_{1}\gamma}\cdot c_{\gamma\alpha_{2}}=c_{(+1)}\cdot c_{(-1)}$, where  $c_{\alpha_{1}\alpha_{2}}\in\mathbb{C}$ entails   $\sigma_{+1,1}=-\sigma_{-1,1}$, so the previous relation is equivalent to   
\begin{equation}
(1-\kappa)^{\sigma_{+1,2}-\sigma_{-1,2}}=\kappa^{\sigma_{-1,1}-\sigma_{-1.2}}.
\label{eq: mixed class resonances}
\end{equation}
This relation is trivially satisfied only if $\hat{\chi}(\mathcal{I}\mid_{\alpha_{1}\alpha_{2}}^{ij})\neq\hat{\chi}(\mathcal{I}\mid _{\alpha_{u}\gamma}^{ij})$ for both $u\in[2]$, i.e. they do not share the same set configuration (\ref{eq: two configurations for proportional}) and an identity analogous to (\ref{eq: intertwined configurations}) holds. Otherwise, (\ref{eq: mixed class resonances}) restricts the allowed values of $\kappa$, that is, $\kappa\in\{\frac{1}{2},-1,2\}$, noting that $\kappa=1$ 
is excluded by Proposition \ref{prop: existence proportional but non-equal terms}. In particular, this constraint is required by the following implications
\begin{equation}
\gamma_{1},\gamma_{2}\in\mathcal{I}^{\mathtt{C}},\,\hat{A}_{\alpha_{1}\gamma_{1}}^{ij}=\hat{A}_{\alpha_{1}\gamma_{2}}^{ij}\in\left\{F_{(+1)},F_{(-1)}\right\}\; 
\Rightarrow \; Y_{\gamma_{1}\gamma_{2}}^{ij}\in\mathbb{F} \; \Rightarrow \; Y_{\gamma_{1}\gamma_{2}}\in\mathbb{C}
\label{eq: proportionality implications} 
\end{equation}
by Lemmas \ref{lem: observable sets with 0 are algebraic} and \ref{lem: case B^(ij)_(ab) square}; the last condition is achievable only if $\hat{\chi}(\mathcal{I}\mid_{\alpha_{1}\gamma_{1}}^{ij})\neq \hat{\chi}(\mathcal{I}\mid_{\alpha_{1}\gamma_{2}}^{ij})$ at $\Lambda_{\alpha_{1}\gamma_{1}}^{ij}\neq 1$, forcing $\hat{\chi}(\mathcal{I}\mid_{\alpha_{1}\alpha_{2}}^{ij})=\hat{\chi}(\mathcal{I}\mid_{\alpha_{1}\gamma_{u}}^{ij})$ for some $u\in[2]$, and thus requiring $\kappa\in\{\frac{1}{2},-1,2\}$ as above. On the other hand, the proportionality constant $Y_{\gamma_{1}\gamma_{2}}^{ij}$ must belong to $\{\kappa^{u}-1,(\kappa^{u}-1)^{-1}\}$ for both $u\in[2]$, as it is accessible from both terms in $\{\hat{A}_{\alpha_{1}\gamma_{1}},\hat{A}_{\alpha_{3}\gamma_{1}}\}=\{F_{(+1)},F_{(-1)}\}$; therefore, $\kappa=(1-\kappa)^{\sigma+1}$ for some $\sigma \in \{1,-1\}$, which is not verified at $\kappa\in\{\frac{1}{2},-1,2\}$. 

The previous argument returns at most two indices $\gamma_{1},\gamma_{2}$ such that $Y_{\alpha_{1}\gamma_{1}}^{ij}$ and $Y_{\alpha_{1}\gamma_{2}}^{ij}$ are radical, specifically $\{\hat{A}_{\alpha_{u}\gamma_{1}}^{ij},\hat{A}_{\alpha_{u},\gamma_{2}}^{ij}\}=\left\{F_{(+1)},F_{(-1)}\right\}$ for both $u\in[2]$, and at most two indices $\omega_{1},\omega_{2}$ exist such that $Y_{\alpha_{1}\omega_{1}}^{ij},Y_{\omega_{2},\alpha_{2}}^{ij}\in\mathbb{C}$, according to Corollary \ref{cor: only one constant factor per Y-term}. However, by the same argument, any three such indices are mutually exclusive: for any $\gamma_{1}$ with $\hat{A}_{\alpha_{1}\gamma}^{ij}=F_{(u)}$, $u\in\{+1,-1\}$, and $\omega_{1},\omega_{2}$ as above, we infer the existence of $\sigma_{1},\sigma_{2}\in\{1,-1\}$ such that
\begin{eqnarray}
Y_{\alpha_{1}\omega_{1}}^{ij} & \in & \{-\kappa,-\kappa^{-1}\}\cap\{\kappa^{u}-1,(\kappa^{u}-1)^{-1}\} \Rightarrow  \kappa=(1-\kappa^{u})^{\sigma_{1}},\nonumber \\
Y_{\omega_{2}\alpha_{2}}^{ij} & \in & \{-\kappa,-\kappa^{-1}\}\cap\{\kappa^{-u}-1,(\kappa^{-u}-1)^{-1}\} \Rightarrow \kappa=(1-\kappa^{-u})^{\sigma_{2}}.
\label{eq: linking chain positions}
\end{eqnarray}
These relations are not compatible with the condition $\kappa\in\{\frac{1}{2},-1,2\}$ obtained from $\hat{\chi}(\mathcal{I}\mid_{\alpha_{1}\alpha_{2}}^{ij})=\hat{\chi}(\mathcal{I}\mid_{\alpha_{1}\gamma}^{ij})$ or $\hat{\chi}(\mathcal{I}\mid_{\omega_{1}\alpha_{2}}^{ij})=\hat{\chi}(\mathcal{I}\mid_{\omega_{1}\gamma}^{ij})$, one of which follows from the conjunction of 
\begin{eqnarray} 
\Lambda_{\alpha_{1}\alpha_{2}}^{ij}=\Lambda_{\omega_{1}\alpha_{2}}^{ij}=1,\; Y_{\alpha_{1}\omega_{1}}^{ij}\in\mathbb{C}  & \Rightarrow & \hat{\chi}(\mathcal{I}\mid_{\alpha_{1}\alpha_{2}}^{ij})=\hat{\chi}(\mathcal{I}\mid_{\omega_{1}\alpha_{2}}^{ij}),\nonumber \\ 
\hat{A}_{\alpha_{1}\gamma}^{ij}=\hat{A}_{\omega_{1}\gamma}^{ij}\in\{F_{(+1)},F_{(-1)}\},\; Y_{\alpha_{1}\omega_{1}}^{ij}\in\mathbb{C} & \Rightarrow &
\hat{\chi}(\mathcal{I}\mid_{\alpha_{1}\gamma}^{ij})\neq\hat{\chi}(\mathcal{I}\mid_{\omega_{1}\gamma}^{ij}).
\label{eq: splitting chain positions}
\end{eqnarray}
The same reasoning applies by considering a single $Y_{\alpha_{1}\omega_{1}}^{ij}\in\mathbb{C}$ but two terms in $\{\hat{A}_{\alpha_{1}\gamma_{1}}^{ij},\hat{A}_{\alpha_{1},\gamma_{2}}^{ij}\}=\{F_{(+1)},F_{(-1)}\}$, since we recover (\ref{eq: linking chain positions}) by connecting all terms $\hat{A}_{\alpha_{1}\alpha_{2}}^{ij}$, $\hat{A}_{\alpha_{1}\gamma_{1}}^{ij}$, and $\hat{A}_{\alpha_{1}\gamma_{2}}^{ij}$ to the same constant $Y_{\alpha_{1}\omega_{1}}^{ij}$, and (\ref{eq: splitting chain positions}) with $\gamma=\gamma_{1}$. Thus, we find at most two indices in $\mathcal{I}^{\mathtt{C}}$ besides $\alpha_{1}$ and $\alpha_{2}$. 
\end{proof}
This bound represents minimal information required to recover the consistency conditions forcing combinatorial reduction, under Assumption (\ref{eq: all 3 terms non-vanishing}), as shown in the following example. 
\begin{example}
\label{exa: counterexample limited information, dimensionality}
When $n-k=4$ we can obtain radical terms through the configuration $\mathbf{L}_{\text{ex}}=\mathbf{E}_{-1}$ and $\mathbf{R}_{\text{ex}}=\mathbf{E}_{1}^{\mathtt{T}}$ with  
\begin{eqnarray*}
\mathbf{E}_{\varepsilon} & := & 
{\small\left(\begin{array}{cccccc}
1 & 0 & 1 & 1 & 1 & 1\\
0 & 1 & 1 & \mathrm{r}+\varepsilon\cdot\sqrt{\mathrm{r}^{2}+1} & -\mathrm{r}-\varepsilon\cdot\sqrt{\mathrm{r}^{2}+1} & -1
\end{array}\right)}.
\end{eqnarray*}
It is easily checked by direct computation that all the minors of $\mathbf{R}_{\text{ex}}$ are non-vanishing, the products
$\Delta_{\mathbf{L}_{\text{ex}}}(\mathcal{I}) \cdot \Delta_{\mathbf{R}_{\text{ex}}}(\mathcal{I})$, $\mathcal{I}\in \wp_{2}[6]$, are monomials 
in $\mathbb{C}(\mathrm{r})$, but (\ref{eq: consistency and curvature}) does not vanish for every choice of indices.
\end{example}

Finally, we can summarise the previous results to prove Theorem \ref{thm: rigidity theorem monomial}. 
\begin{proof}
Call $\alpha_{1},\alpha_{2}$ two generic columns of $\mathbf{L}(\mathbf{1})$, whose existence is stated in Assumption \ref{claim: generic matrices}, and fix $i\in\mathcal{I}$. Take any $(m,\omega)\in\mathcal{I}\times\mathcal{I}^{\mathtt{C}}$: we assume that $\mathbf{L}(\mathbf{1})$ does not have null columns, since they do not enter the deformation (\ref{eq: monomial terms of Cauchy-Binet expansion}) of the determinantal expansion, so we can find $p\in\mathcal{I}$ satisfying $h(\mathcal{I}_{\omega}^{p})\neq0$. We choose a basis $\mathcal{J}\in\{\mathcal{I},\mathcal{I}_{\alpha_{2}}^{p}\}$ such that $h(\mathcal{J}_{\omega}^{m})\neq0$, setting $\mathcal{J}=\mathcal{I}$ at $m=p$; then, define the singletons $\{\overline{p}\}:=\{p,\alpha_{2}\}\cap\mathcal{J}$ and $\{\overline{\alpha}_{2}\}:=\{p,\alpha_{2}\}\cap\mathcal{J}^{\mathtt{C}}$, so that $\overline{\alpha}_{1}:=\alpha_{1}$ and $\overline{\alpha}_{2}$ are generic columns for $\mathcal{J}$, in particular $0\notin\chi(\mathcal{J}\mid_{\overline{\alpha}_{1}\overline{\alpha}_{2}}^{m\overline{p}})$ at $m\neq p$. By Proposition \ref{prop: bound number of radicals} and Lemma \ref{lem: case B^(ij)_(ab) square}, this means $Y(\mathcal{J})_{\overline{\alpha}_{1}\overline{\alpha}_{2}}^{m\overline{p}}\in\mathbb{C}$. We can also find $u\in[2]$ such that $h(\mathcal{I}_{\alpha_{u}\omega}^{mp})\neq 0$ when $m\neq p$, which makes $0\notin\chi(\mathcal{J}\mid_{\overline{\alpha}_{u}\omega}^{m\overline{p}})$; as before, from Proposition \ref{prop: bound number of radicals} and Lemma \ref{lem: case B^(ij)_(ab) square} we can infer $Y(\mathcal{J})_{\overline{\alpha}_{u}\omega}^{m\overline{p}}\in\mathbb{C}$. This returns $Y(\mathcal{J})_{\omega\overline{\alpha}_{2}}^{m\overline{p}}\in\mathbb{C}$ by (\ref{eq: associativity, d}). If $\mathcal{I}\neq\mathcal{J}$, we go back to $\mathcal{I}$ using (\ref{eq: Grassmann-Plucker translation exchange, vertical}) to get $Y_{\alpha_{1}\alpha_{2}}^{mp},Y_{\omega\alpha_{2}}^{mp}\in\mathbb{C}$ and, again using (\ref{eq: associativity, d}), $Y_{\alpha_{1}\omega}^{mp}\in\mathbb{C}$. We can repeat this procedure with $i$ instead of $m$, getting $Y_{\alpha_{1}\omega}^{ip}\in\mathbb{C}$, so we conclude $Y_{\alpha_{1}\omega}^{im}=-Y_{\alpha_{1}\omega}^{ip}\cdot Y_{\alpha_{1}\omega}^{pm}\in\mathbb{C}$. 

In this configuration, we exploit the invariance (\ref{eq: generalized permutation matrix gauge}) to obtain the form (\ref{eq: action of special gauge on R}) with the entries $Y_{\alpha_{1}\omega}^{im}$ in (\ref{eq: entries of R in special gauge}), finding $\tilde{\mathbf{R}}(\mathbf{t}):=\mathbf{D}(\mathbf{t})\cdot \mathbf{R}(\mathbf{t})\cdot \mathbf{d}(\mathbf{t})\in\mathbb{C}^{n\times k}$. Condition (\ref{eq: monomial terms of Cauchy-Binet expansion}) involving $\tilde{\mathbf{L}}(\mathbf{t}):=\mathbf{d}(\mathbf{t})^{-1}\cdot \mathbf{L}(\mathbf{t})\cdot \mathbf{D}(\mathbf{t})^{-1}$ and $\tilde{\mathbf{R}}(\mathbf{t})$ forces all minors $\Delta_{\tilde{\mathbf{L}}(\mathbf{t})}(\mathcal{I})$, with $\mathcal{I}\in\mathfrak{G}(\mathbf{L}(\mathbf{1}))$, to be monomials. Setting 
\begin{equation}
\chi_{\mathcal{J}}(j;\beta):=\Psi\left(\Delta_{\tilde{\mathbf{L}}(\mathbf{t})}(\mathcal{J})^{-1}\cdot \Delta_{\tilde{\mathbf{L}}(\mathbf{t})}(\mathcal{J}_{\beta}^{j})\right),\quad j\in \mathcal{J},\beta\in\mathcal{J}^{\mathtt{C}}
\label{eq: quantification distinguishable columns}
 \end{equation} 
we easily prove that $\chi_{\mathcal{J}}(j;\beta)$ does not depend on $\mathcal{J}$: taking two bases $\mathcal{J}_{0},\mathcal{J}_{r}$ satisfying $j\in \mathcal{J}_{u},\beta\in\mathcal{J}_{u}^{\mathtt{C}}$ for both $u\in\{0,r\}$ with $r:=\#(\mathcal{J}_{1}\Delta\mathcal{J}_{2})$, we introduce a finite sequence of bases $\mathcal{J}_{u}$ as in (\ref{eq: chain of non-vanishing}), $u\in[r-1]$, noting that $j,\beta\notin\{m_{1},\dots,m_{r}\}\cup\{\delta_{1},\dots,\delta_{r}\}$. Then, for each $u\in[r-1]$, the relation $\chi_{\mathcal{J}_{u-1}}(j;\beta)=\chi_{\mathcal{J}_{u}}(j;\beta)$ is entailed by the three-term Grassmann-Pl\"{u}cker relation (\ref{eq: Grassmann-Plucker including permutations}), since a non-trivial linear combination over $\mathbb{C}$ of at most three units in $\mathbb{C}(\mathbf{t})$ can vanish only if they are pairwise proportional. Iterating this argument for each $u\in[r-1]$, we find that $\chi_{\mathcal{J}}(j;\beta)$ does not depend on $\mathcal{J}$.  

Finally, we fix the basis $\mathcal{I}$ and construct the function $\psi$ in (\ref{eq: set-to-element integrability}) setting $\psi(m):=\chi_{\mathcal{I}}(m,\alpha_{1})$ for all $m\in\mathcal{I}$ and $\psi(\omega):=\chi(\omega,m_{\omega})+\psi(m_{\omega})$ for any $\omega\in\mathcal{I}^{\mathtt{C}}$, where $m_{\omega}\in\mathcal{I}$ satisfies $h(\mathcal{I}_{\omega}^{m_{\omega}})\neq0$. Proceeding as in the proof in \cite[Th.15]{Angelelli2019}, we get the thesis. 
\end{proof}

\section{Combinatorial reduction from set permutations to element permutations}
\label{sec: Reduction of multi-valued maps and applications to permutation coding}

An implication of the previous results concerns the extraction of information from a given combinatorial structure: specifically, we can embed the permutations of $[n]$ into the permutations of $\mathfrak{G}(\mathbf{L}(\mathbf{1}))\subseteq \wp_{k}[n]$, which induce a relabelling of the collection $\{\Delta_{\mathbf{L}}(\mathcal{I}),\,\mathcal{I}\in\mathfrak{G}(\mathbf{L}(\mathbf{1}))\}$. This lets us take advantage of the apparent complexity arising from the embedding, while still allowing us to recover the original permutation and matrices that provide the decomposition.

We adapt each permutation $\widehat{\Psi}:\,\mathfrak{G}(\mathbf{L}(\mathbf{1}))\longrightarrow\mathfrak{G}(\mathbf{L}(\mathbf{1}))$ to our formalism through the composition 
 \begin{equation}
\Psi:=\mathtt{Ind}\circ\widehat{\Psi}:\,\wp_{k}[n]\longrightarrow\{0,1\}^{n}\label{eq: indicator set mapping}
 \end{equation}
where $\mathtt{Ind}$ is the map that associates each set with its indicator function. This defines a set function with values in $\mathbb{Z}^{n}$. With these premises, we have the following consequence of Theorem \ref{thm: rigidity theorem monomial}: 
\begin{cor}
\label{cor: permutation coding and reductions} Let $\widehat{\Psi}:\,\mathfrak{G}\longrightarrow\mathfrak{G}$ be a permutation of a matroid $\mathfrak{G}$ with two generic columns, that is, satisfying Assumption \ref{claim: generic matrices}. Using the correspondence (\ref{eq: indicator set mapping}), for any mapping $g:\,\mathfrak{G}\longrightarrow\mathbb{C}$ there exist two $(k\times n)$-dimensional matrices $\mathbf{A}(\mathbf{t}),\mathbf{Q}^{\mathtt{T}}(\mathbf{t})$ satisfying $\max\{k,n-k\}\geq 5$, $\mathfrak{G}=\mathfrak{G}(\mathbf{A}(\mathbf{1}))$, and 
 \begin{equation}
\Delta_{\mathbf{A}(\mathbf{t})}(\mathcal{I})\cdot\Delta_{\mathbf{Q}(\mathbf{t})}(\mathcal{I})=g(\mathcal{I})\cdot\mathbf{t}^{\Psi(\mathcal{I})},\quad\mathcal{I}\in\mathfrak{G}
\label{eq: terms of Cauchy-Binet expansion for permutation coding}
 \end{equation}
if and only if there exist matrices $\mathbf{a},\mathbf{q}^{\mathtt{T}}\in\mathbb{C}^{k\times n}$ and a permutation $\psi\in\mathcal{S}_{n}$ such that the pairs $(\mathbf{A}(\mathbf{t}),\mathbf{Q}(\mathbf{t}))$ and $(\mathbf{a},\mathrm{diag}(t_{\psi(1)},\dots,t_{\psi(n)})\cdot\mathbf{q})$ have the same Cauchy-Binet expansion, i.e. 
 \begin{equation}
\Delta_{\mathbf{A}(\mathbf{t})}(\mathcal{I})\cdot\Delta_{\mathbf{Q}(\mathbf{t})}(\mathcal{I})=\Delta_{\mathbf{a}}(\mathcal{I})\cdot\Delta_{\mathbf{q}}(\mathcal{I})\cdot\prod_{\alpha\in\mathcal{I}}t_{\psi(\alpha)},\quad\mathcal{I}\in\wp_{k}[n].
\label{eq: integrability for permutation coding}
 \end{equation}
\end{cor}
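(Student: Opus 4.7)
The plan is to prove the two implications separately: the $(\Leftarrow)$ direction by direct verification, and the $(\Rightarrow)$ direction by invoking Theorem~\ref{thm: rigidity theorem monomial} and then promoting the resulting affine exponent function to a genuine permutation of $[n]$.

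For the $(\Leftarrow)$ direction, I would set $\mathbf{A}(\mathbf{t}) := \mathbf{a}$ and $\mathbf{Q}(\mathbf{t}) := \mathrm{diag}(t_{\psi(1)},\dots,t_{\psi(n)}) \cdot \mathbf{q}$. Then $\mathfrak{G}(\mathbf{A}(\mathbf{1})) = \mathfrak{G}(\mathbf{a}) = \mathfrak{G}$ is immediate, and multilinearity of the determinant yields $\Delta_{\mathbf{A}(\mathbf{t})}(\mathcal{I}) \cdot \Delta_{\mathbf{Q}(\mathbf{t})}(\mathcal{I}) = \Delta_{\mathbf{a}}(\mathcal{I}) \cdot \Delta_{\mathbf{q}}(\mathcal{I}) \cdot \prod_{\alpha \in \mathcal{I}} t_{\psi(\alpha)}$. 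Since $\psi \in \mathcal{S}_n$ gives $\prod_{\alpha \in \mathcal{I}} t_{\psi(\alpha)} = \mathbf{t}^{\mathtt{Ind}(\psi(\mathcal{I}))}$, matching (\ref{eq: terms of Cauchy-Binet expansion for permutation coding}) reduces to $\psi(\mathcal{I}) = \widehat{\Psi}(\mathcal{I})$ setwise on $\mathfrak{G}$, with $g(\mathcal{I}) = \Delta_{\mathbf{a}}(\mathcal{I}) \cdot \Delta_{\mathbf{q}}(\mathcal{I})$.

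For the $(\Rightarrow)$ direction, I would apply Theorem~\ref{thm: rigidity theorem monomial} to $\mathbf{L}(\mathbf{t}) := \mathbf{A}(\mathbf{t})$ and $\mathbf{R}(\mathbf{t}) := \mathbf{Q}(\mathbf{t})$: Assumption~\ref{claim: generic matrices} supplies both the two generic columns of $\mathbf{A}(\mathbf{1})$ and the genericity of $\mathbf{Q}(\mathbf{t})$, and $\max\{k, n-k\} \geq 5$ is inherited. Writing $\mathbf{a} := \mathbf{A}(\mathbf{1})$ and $\mathbf{q} := \mathbf{Q}(\mathbf{1})$, the theorem yields $\mathbf{m}_0 \in \mathbb{Z}^n$ and $\psi_* : [n] \to \mathbb{Z}^n$ such that
\[
\mathtt{Ind}(\widehat{\Psi}(\mathcal{I})) = \mathbf{m}_0 + \sum_{\alpha \in \mathcal{I}} \psi_*(\alpha), \qquad \mathcal{I} \in \mathfrak{G},
\]
and an expansion of the form (\ref{eq: integrability for permutation coding}) save for a residual overall factor $\mathbf{t}^{\mathbf{m}_0}$ and the fact that the exponent $\psi_*(\alpha) \in \mathbb{Z}^n$ is a priori a general integer vector rather than a standard basis vector $\mathbf{e}_{\psi(\alpha)}$.

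The main obstacle is this last promotion, and it is the step I would devote most attention to. First, I would exploit the shift invariance $\psi_* \mapsto \psi_* + \mathbf{v}$, $\mathbf{m}_0 \mapsto \mathbf{m}_0 - k\mathbf{v}$ of the affine relation, together with the fact that $\sigma(\alpha) := \sum_i \psi_*(\alpha)_i$ is constant on the matroid support (summing the affine identity componentwise and applying the exchange axiom along a single-index chain as in the proof of Theorem~\ref{thm: rigidity theorem monomial}), to normalise to $\sigma \equiv 1$ and $\mathbf{m}_0 = \mathbf{0}$. Fixing a coordinate $i$, I would then use the three families of bases $\mathcal{I}_0$, $\mathcal{I}_{0,\alpha_u}^{m}$, $\mathcal{I}_{0,\alpha_1\alpha_2}^{m_1 m_2}$ produced by the two generic columns $\alpha_1, \alpha_2$ to derive, from the basis-sum constraints $\sum_{\alpha \in \mathcal{I}} \psi_*(\alpha)_i \in \{0, 1\}$, enough integer linear inequalities to force $\psi_*(\alpha)_i \in \{0, 1\}$ on $\mathcal{I}_0 \cup \{\alpha_1, \alpha_2\}$; propagation through single-exchange chains then extends this conclusion to all of $[n]$. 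Combined with $\sigma \equiv 1$, this forces $\psi_*(\alpha) = \mathbf{e}_{\pi(\alpha)}$ for some $\pi : [n] \to [n]$, and bijectivity of $\widehat{\Psi}$ on $\mathfrak{G}$ makes $\pi$ a bijection on the matroid support, which I then extend arbitrarily to an element $\psi \in \mathcal{S}_n$ realising (\ref{eq: integrability for permutation coding}).
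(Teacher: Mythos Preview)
The paper does not prove this corollary explicitly; it is stated as a direct consequence of Theorem~\ref{thm: rigidity theorem monomial}. Your $(\Leftarrow)$ verification and your invocation of Theorem~\ref{thm: rigidity theorem monomial} for $(\Rightarrow)$ are exactly what the paper intends, and you correctly single out the promotion of the affine datum $(\mathbf{m}_0,\psi_*)$ to a permutation $\psi\in\mathcal{S}_n$ as the only substantive step.

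There is, however, a real gap in that promotion. The shift $\psi_*\mapsto\psi_*+\mathbf{v}$, $\mathbf{m}_0\mapsto\mathbf{m}_0-k\mathbf{v}$ with $\mathbf{v}\in\mathbb{Z}^n$ kills $\mathbf{m}_0$ only when $k\mid(\mathbf{m}_0)_i$ for each $i$, which you never establish; and your coordinatewise integer-inequality scheme on $\mathcal{I}_0\cup\{\alpha_1,\alpha_2\}$ only forces $\psi_*(\alpha)_i\in\{z_i,z_i+1\}$ with the count of the larger value lying in $\{0,1,k+1,k+2\}$---it does not eliminate the upper two cases. The obstruction is concrete: for $n=2k$ with $k\geq 5$, take $\mathfrak{G}=\wp_k[n]$ and $\widehat{\Psi}(\mathcal{I}):=\mathcal{I}^{\mathtt{C}}$. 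This is affine with $\psi_*(\alpha)=-\mathbf{e}_\alpha$ and $\mathbf{m}_0=\mathbf{1}$, realised by $\mathbf{A}(\mathbf{t}):=\mathbf{a}$ generic and $\mathbf{Q}(\mathbf{t}):=\mathrm{diag}(t_\alpha^{-1})_{\alpha\in[n]}\cdot\mathbf{q}\cdot M$ with $\mathbf{q}$ generic and $\det M=\prod_\alpha t_\alpha$, giving $\Delta_{\mathbf{A}}(\mathcal{I})\Delta_{\mathbf{Q}}(\mathcal{I})=\Delta_{\mathbf{a}}(\mathcal{I})\Delta_{\mathbf{q}}(\mathcal{I})\prod_{\alpha\in\mathcal{I}^{\mathtt{C}}}t_\alpha$; yet no $\psi\in\mathcal{S}_n$ satisfies $\psi(\mathcal{I})=\mathcal{I}^{\mathtt{C}}$ for all $\mathcal{I}$. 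So the complementation-type behaviour must be excluded by some further input---either a sharper use of the bijectivity of $\widehat{\Psi}$ on all of $\mathfrak{G}$, or by reading (\ref{eq: integrability for permutation coding}) modulo an overall unit $\mathbf{t}^{\mathbf{m}_0}$ (as in the paragraph following Theorem~\ref{thm: rigidity theorem monomial}) or modulo the duality of Remark~\ref{rem: dual Grassmannian representation}.
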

Thereafter we will use the same symbol for $\Psi$ and $\widehat{\Psi}$ in (\ref{eq: indicator set mapping}) with a slight abuse of notation. 

Analysing the statement of Corollary \ref{cor: permutation coding and reductions} from an information-theoretic perspective, if the labelling $\mathcal{I}\mapsto g(\mathcal{I})$ was known, we could check the existence of $\mathbf{a}$
from $g(\mathcal{I})=\Delta_{\mathbf{a}}(\mathcal{I})\cdot\Delta_{\mathbf{q}}(\mathcal{I})$ and the knowledge about $\mathbf{q}:=\mathbf{Q}(\mathbf{1})$, obtaining $\Delta_{\mathbf{a}}(\mathcal{I})$ and checking the Grassmann-Pl\"{u}cker relations. On the other hand, Corollary \ref{cor: permutation coding and reductions} suggests an approach to check if a (possibly implicitly defined) permutation $\Psi$ of $\mathfrak{G}$ is induced by a permutation $\psi\in\mathcal{S}_{n}$: products $g(\mathcal{I})\cdot\mathbf{t}^{\Psi(\mathcal{I})}$ in (\ref{eq: terms of Cauchy-Binet expansion for permutation coding}) encode accessible information about the permutation $\Psi$ and, based on the previous observation, we have the freedom to choose different evaluation points $\mathbf{t}_{0}$ to infer information about the candidate $\mathbf{a}$ and $\Psi$ from these products. 

We will refer to a given choice of the evaluation point $\mathbf{t}_{0}$ as a \emph{query}, which returns the \emph{unlabeled} multiset of values, that is, a tuple with components $g(\mathcal{I})\cdot\mathbf{t}_{0}^{\Psi(\mathcal{I})}$, $\mathcal{I}\in\wp_{k}[n]$, ordered without explicit dependence on $\mathcal{I}$. For a given query $\mathbf{t}_{0}$ and a permutation $\Psi\in\mathcal{S}_{\mathfrak{G}}$, we set 
 \begin{equation}
\mathcal{G}(\Psi;\mathbf{t}_{0}):=\left\{ \left| g(\mathcal{I})\cdot\mathbf{t}_{0}^{\Psi(\mathcal{I})}\right|\right\} \setminus\{0\},
\quad
\Delta(\Psi;\mathbf{t}_{0}):=\sum_{\mathcal{I}\in\mathfrak{G}}g(\mathcal{I})\cdot\mathbf{t}_{0}^{\Psi(\mathcal{I})}.
\label{eq: CB terms and quasi-determinantal coupling permutation}
 \end{equation}
In this framework, an approach to check the hypothesis \texttt{Hp}: ``\textit{$\Psi$ is induced by a permutation $\psi_{\mathtt{Hp}}\in\mathcal{S}_{n}$ of the elements of $[n]$}'' proceeds through the algorithm that we describe after the following proposition, which also applies to the constant matrices $\mathbf{L}=\mathbf{L}(\mathbf{1})$ and $\mathbf{R}=\mathbf{R}(\mathbf{1})$. 
\begin{prop}
\label{prop: sign ambiguity removal}
Let there exist an observable set $\chi(\mathcal{I}|_{\alpha\beta}^{ij})$ whose corresponding polynomial $F_{\alpha\beta}^{ij}$ in (\ref{eq: polynomial h to Y}) has distinct roots. Then, starting from the family of minor products $\{ h(\mathcal{I}):\,\mathcal{I}\in\mathfrak{G}(\mathbf{L}) \}$, each choice of a root of $F_{\alpha\beta}^{ij}$ for $Y_{\alpha\beta}^{ij}$ determines all the $Y$-terms associated with observable sets.
\end{prop}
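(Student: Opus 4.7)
The plan is to exhibit each observable $Y$-term as a single-valued rational expression in $Y_{\alpha\beta}^{ij}$ and the known minor products $\{h(\mathcal{I})\}$, so that a choice of a root of $F_{\alpha\beta}^{ij}$ propagates to a unique assignment on the full family. The tools are the local transformations (\ref{eq: Grassmann-Plucker translation exchange, vertical})--(\ref{eq: Grassmann-Plucker translation exchange, diagonal}), the associativity identities (\ref{eq: associativity, d})--(\ref{eq: associativity, u}), the quadrilateral decomposition (\ref{eq: quadrilateral decomposition}), and the basis-chain argument underlying Proposition \ref{prop: same radical for all starting sets}.

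First, I would note that the hypothesis of distinct roots is exactly $B_{\alpha\beta}^{ij}\neq 0$, so a choice between the two roots of $F_{\alpha\beta}^{ij}$ fixes the sign $\varepsilon_{\alpha\beta}^{ij}$ in (\ref{eq: radical expression}) unambiguously. Starting from $Y_{\alpha\beta}^{ij}$, the local transformations (\ref{eq: Grassmann-Plucker translation exchange, vertical})--(\ref{eq: Grassmann-Plucker translation exchange, diagonal}), together with the reciprocation $\alpha\leftrightarrows\beta$, produce the full $\mathcal{S}_3$-orbit discussed in Remark \ref{rem: transformations and permutation group} as explicit M\"{o}bius expressions in $Y_{\alpha\beta}^{ij}$; this pins down the $Y$-terms for all index permutations in $\{\alpha,\beta,i,j\}$ and for every basis obtained from $\mathcal{I}$ by a single exchange within this quadruple. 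To reach an arbitrary basis $\mathcal{J}\in\mathfrak{G}(\mathbf{L})$, I would then build a matroid chain of single-element exchanges as in (\ref{eq: chain of non-vanishing}) and apply the local transformations step by step, at each step realising the new $Y$-term as a single-valued rational function of the previous one.

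Second, to handle index configurations $(m,l;\gamma,\delta)$ outside the immediate orbit described above, I would apply the associativity identities one index at a time: once $Y_{\alpha\beta}^{im}$ has been produced by an earlier step, the relation $Y_{\alpha\beta}^{im}\cdot Y_{\alpha\beta}^{mj}=-Y_{\alpha\beta}^{ij}$ yields $Y_{\alpha\beta}^{mj}$ as a single-valued rational expression in already-determined quantities, and analogously for lower-index substitutions via (\ref{eq: associativity, d}); the quadrilateral decomposition (\ref{eq: quadrilateral decomposition}) bridges cases where upper and lower indices must be updated jointly. Each invoked identity is a genuine algebraic equality in $\mathbb{F}(\sqrt{D})$ supplied by the coupled Pl\"{u}cker equations (\ref{eq: Grassmann-Plucker including permutations}), so no branching is introduced: the chosen root of $F_{\alpha\beta}^{ij}$ selects, at every subsequent step, the unique root of the corresponding polynomial $F_{\gamma\delta}^{ml}$ compatible with the propagation.

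The main obstacle will be ensuring that the chain of intermediate $Y$-terms invoked above stays within the family of observable sets, i.e.\ that none of the divisors or auxiliary $Y$-terms required belongs to a non-observable $\chi$-set. Lemma \ref{lem: observable sets with 0 are algebraic} controls this locally, ruling out zeros in the immediate neighbourhood of a radical term, while the matroid exchange property (\ref{eq: chain of non-vanishing}) supplies, at the level of bases, a sequence of admissible single exchanges connecting $\mathcal{I}$ to any target $\mathcal{J}\in\mathfrak{G}(\mathbf{L})$. Where a direct index exchange would traverse a non-observable set, one reroutes through an alternative intermediate step, exploiting the $\mathcal{S}_3$-symmetry of the transformations (Remark \ref{rem: transformations and permutation group}) and the fact that every intermediate single-valued expression is invariant under this symmetry; this combinatorial re-routing is the delicate part of the argument.
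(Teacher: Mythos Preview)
Your plan has a genuine gap at the step where you leave the $\mathcal{S}_3$-orbit of $Y_{\alpha\beta}^{ij}$ and reach a $Y$-term that carries a \emph{new} index. The local transformations (\ref{eq: Grassmann-Plucker translation exchange, vertical})--(\ref{eq: Grassmann-Plucker translation exchange, diagonal}) and the matroid chain (\ref{eq: chain of non-vanishing}) only shuffle bases and index roles within the quadruple $\{i,j,\alpha,\beta\}$; they never manufacture $Y_{\alpha\beta}^{im}$ for an $m\notin\{i,j\}$. So your sentence ``once $Y_{\alpha\beta}^{im}$ has been produced by an earlier step'' has no antecedent: there is no earlier step. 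The associativity relation (\ref{eq: associativity, u}) links two unknowns $Y_{\alpha\beta}^{im}$ and $Y_{\alpha\beta}^{mj}$ to the single known $Y_{\alpha\beta}^{ij}$, and the quadrilateral decomposition (\ref{eq: quadrilateral decomposition}) is worse still, expressing the known term as a product of four unknowns. Saying the identities ``hold in $\mathbb{F}(\sqrt{D})$'' does not help either: each radical $Y_{\gamma\delta}^{ml}$ equals $(A_{\gamma\delta}^{ml}\pm Q_{\gamma\delta}^{ml}\sqrt{D})/(2h(\mathcal{I}_{\delta}^{m})h(\mathcal{I}_{\gamma}^{l}))$, and the sign in front of $Q_{\gamma\delta}^{ml}$ is not determined by the $h$-data even after $\sqrt{D}$ is fixed. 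The assertion ``no branching is introduced'' is therefore exactly what has to be proved.

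The paper supplies the missing mechanism with a Vieta computation. Having fixed $Y_{\alpha\beta}^{ij}$, suppose for some $\gamma$ that \emph{both} roots $y_{1},y_{2}$ of $F_{\beta\gamma}^{ij}$ were compatible, i.e.\ that $-Y_{\alpha\beta}^{ij}\,y_{1}$ and $-Y_{\alpha\beta}^{ij}\,y_{2}$ were both roots of $F_{\alpha\gamma}^{ij}$. Multiplying and using the constant terms of the three $F$-polynomials (Vieta) forces $(Y_{\alpha\beta}^{ij})^{2}$ to equal the product $\ddot{Y}_{\alpha\beta}^{ij}\cdot\widehat{Y}_{\alpha\beta}^{ij}$ of the two roots of $F_{\alpha\beta}^{ij}$, contradicting their distinctness. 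Hence the choice of $Y_{\alpha\beta}^{ij}$ singles out one root of $F_{\beta\gamma}^{ij}$, and this ``domino'' relation (the paper writes it $Y_{\alpha\beta}^{ij}\rightarrowtriangle Y_{\beta\gamma}^{ij}$) is what lets one advance one index at a time; a connectivity argument then pushes the determination through the quadrilateral decomposition to every observable set. Your high-level picture (propagate via the Pl\"ucker identities) is right, but without this local Vieta step the propagation is two-valued at each new index and the argument does not close.
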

\begin{proof}
Let us assume that there exists an observable set $\chi(\mathcal{I}|_{\alpha\beta}^{ij})$ such that both the roots of $F_{\alpha\beta}^{ij}$, call them $\ddot{Y}_{\alpha\beta}^{ij}$ and $ \widehat{Y}_{\alpha\beta}^{ij}$ with $\ddot{Y}_{\alpha\beta}^{ij}\neq \widehat{Y}_{\alpha\beta}^{ij}$, are compatible with the family $\{h(\mathcal{I}):\,\mathcal{I}\in\mathfrak{G}(\mathbf{L})\}$. Then, consider any $\gamma\in\mathcal{I}^{\mathtt{C}}$ such that $\chi(\mathcal{I}|_{\alpha\gamma}^{ij})$ and $\chi(\mathcal{I}|_{\beta\gamma}^{ij})$ are observable too: supposing that the same root $Y_{\beta\gamma}^{ij}$ of $F_{\beta\gamma}^{ij}$ is associated with both the choices $\ddot{Y}_{\alpha\beta}^{ij}$ and $\widehat{Y}_{\alpha\beta}^{ij}$ for $Y_{\alpha\beta}^{ij}$, the aforementioned compatibility imposes that  
$
F_{\alpha\gamma}^{ij}(-\ddot{Y}_{\alpha\beta}^{ij}\cdot Y_{\beta\gamma}^{ij}) = 
F_{\alpha\gamma}^{ij}(-\widehat{Y}_{\alpha\beta}^{ij}\cdot Y_{\beta\gamma}^{ij}) = 0 
$ can hold only if the arguments $-\ddot{Y}_{\alpha\beta}^{ij}\cdot Y_{\beta\gamma}^{ij}$ and $-\widehat{Y}_{\alpha\beta}^{ij}\cdot Y_{\beta\gamma}^{ij}$ are the distinct roots of the quadratic polynomial $F_{\alpha\gamma}^{ij}$. Vieta's formula gives
\begin{eqnarray}
    \frac{h(\mathcal{I}_{\alpha}^{i})\cdot h(\mathcal{I}_{\beta}^{j})}{h(\mathcal{I}_{\beta}^{i})\cdot h(\mathcal{I}_{\alpha}^{j})}\cdot(Y_{\beta\gamma}^{ij})^{2} & = & \ddot{Y}_{\alpha\beta}^{ij}\cdot Y_{\beta\gamma}^{ij} \cdot \widehat{Y}_{\alpha\beta}^{ij}\cdot Y_{\beta\gamma}^{ij} 
    = \frac{h(\mathcal{I}_{\alpha}^{i})\cdot h(\mathcal{I}_{\gamma}^{j})}{h(\mathcal{I}_{\gamma}^{i})\cdot h(\mathcal{I}_{\alpha}^{j})} \nonumber \\ 
    & \Rightarrow & 
    (Y_{\beta\gamma}^{ij})^{2} = \frac{h(\mathcal{I}_{\beta}^{i})\cdot h(\mathcal{I}_{\gamma}^{j})}{h(\mathcal{I}_{\gamma}^{i})\cdot h(\mathcal{I}_{\beta}^{j})} = \ddot{Y}_{\beta\gamma}^{ij}\cdot \widehat{Y}_{\beta\gamma}^{ij}
\label{eq: domino conjugation}
\end{eqnarray}
so $\ddot{Y}_{\beta\gamma}^{ij} = \widehat{Y}_{\beta\gamma}^{ij}$ since $Y_{\beta\gamma}^{ij}\in\{\ddot{Y}_{\beta\gamma}^{ij}, \widehat{Y}_{\beta\gamma}^{ij}\}$. 
Therefore, when $\ddot{Y}_{\beta\gamma}^{ij}$ is the $Y$-term associated with the pair $(\beta,\gamma)$ in the configuration that includes $\ddot{Y}_{\alpha\beta}^{ij}$, the second root $\widehat{Y}_{\beta\gamma}^{ij}$ of $F_{\beta\gamma}^{ij}$ appears in the configuration that includes $\widehat{Y}_{\alpha\beta}^{ij}$, possibly entailing $\ddot{Y}_{\beta\gamma}^{ij}=\widehat{Y}_{\beta\gamma}^{ij}$ as in (\ref{eq: domino conjugation}). On the other hand, for any observable set $\chi(\mathcal{I}|_{\alpha\beta}^{ij})$ that does not fulfil (\ref{eq: all 3 terms non-vanishing}) the corresponding $Y$-term is uniquely defined by (\ref{eq: quartic from Grassmann-Plucker, 2}), since the root $0$ is not acceptable for $Y$-terms. 

Now we introduce some notation that will be used in the rest of this proof: we say that $Y_{\alpha\beta}^{ij}$ is \emph{ambiguous} if (\ref{eq: all 3 terms non-vanishing}) holds for $\chi(\mathcal{I}|_{\alpha\beta}^{ij})$ and the two roots of $F_{\alpha\beta}^{ij}$ are distinct. On the other hand, a \emph{non-ambiguous} $Y$-term $Y_{\gamma\delta}^{lm}$ still derives from an observable set $\chi(\mathcal{I}|_{\gamma\delta}^{lm})$, but (\ref{eq: all 3 terms non-vanishing}) is violated or $F_{\gamma\delta}^{lm}$ has not two distinct roots. The latter condition includes the degenerate cases $\gamma=\delta$ or $l=m$ that return polynomials $F_{\gamma\gamma}^{lm}(X)$ or $F_{\gamma\delta}^{ll}(X)$, respectively, that are proportional to $(X+1)^{2}$. 

Let $\mathcal{Y}$ be the set of ambiguous $Y$-terms $Y_{\alpha\beta}^{ij}$ such that, for any $c\in\mathbb{N}$, $(-1)^{c+1}\cdot Y_{\alpha\beta}^{ij}$ cannot be represented as the product of $c$ non-ambiguous $Y$-terms by iterations of (\ref{eq: associativity, d}) and (\ref{eq: associativity, u}). 
For any $Y_{\alpha\beta}^{ij}\in\mathcal{Y}$ the co-implication $Y_{\alpha\beta}^{ij}\in\mathcal{Y}\Leftrightarrow Y_{\beta\alpha}^{ij}\in\mathcal{Y}$ holds and, given  $\gamma\in\mathcal{I}^{\mathtt{C}}$ such that $\chi(\mathcal{I}|_{\alpha\gamma}^{ij})$ and $\chi(\mathcal{I}|_{\beta\gamma}^{ij})$ are observable, we find $\{ Y_{\alpha\gamma}^{ij},Y_{\beta\gamma}^{ij} \}\cap\mathcal{Y}\neq\emptyset$, say $Y_{\alpha\gamma}^{ij}\in\mathcal{Y}$. From the derivation of (\ref{eq: domino conjugation}), each choice of the root of $F_{\alpha\beta}^{ij}$ for $Y_{\alpha\beta}^{ij}$ determines the choice of the root of $F_{\alpha\gamma}^{ij}$ for $Y_{\alpha\gamma}^{ij}$: we denote this relation as $Y_{\alpha\beta}^{ij}\rightarrowtriangle Y_{\alpha\gamma}^{ij}$ and, similarly, the notation $Y_{\alpha\beta}^{ij}\rightarrowtriangle Y_{\alpha\beta}^{il}$ will be used for analogous variations of upper indices. 

The thesis holds if $\mathcal{Y}=\emptyset$, so fix $Y_{\gamma_{1}\gamma_{2}}^{m_{1}m_{2}}\in\mathcal{Y}$ and take any other $Y_{\alpha\beta}^{ij}\in\mathcal{Y}$. From $Y_{\gamma_{1}\gamma_{2}}^{m_{1}m_{2}}=-Y_{\gamma_{1}\alpha}^{m_{1}m_{2}}\cdot Y_{\alpha\gamma_{2}}^{m_{1}m_{2}}$, by definition there is at least one $w\in[2]$ such that $Y_{\gamma_{w}\alpha}^{m_{1}m_{2}}\in\mathcal{Y}$, say $w=1$. Analogously, 
for at least one $u\in[2]$ we get $Y_{\gamma_{1}\alpha}^{m_{u}i}\in\mathcal{Y}$, say $u=1$. Whether $\left\{ Y_{\gamma_{1}\alpha}^{ij},Y_{\alpha\beta}^{m_{1}i}\right\} \cap\mathcal{Y}\neq\emptyset$, say $Y_{\gamma_{1}\alpha}^{ij}\in\mathcal{Y}$, we get 
$
Y_{\gamma_{1}\gamma_{2}}^{m_{1}m_{2}}\rightarrowtriangle Y_{\gamma_{1}\alpha}^{m_{1}m_{2}}\rightarrowtriangle Y_{\gamma_{1}\alpha}^{m_{1}i}\rightarrowtriangle Y_{\gamma_{1}\alpha}^{ij}\rightarrowtriangle Y_{\alpha\beta}^{ij}
$,
otherwise we infer $Y_{\gamma_{1}\alpha}^{m_{1}j},Y_{\gamma_{1}\beta}^{m_{1}i}\in\mathcal{Y}$ and 
\begin{equation}
Y_{\gamma_{1}\gamma_{2}}^{m_{1}m_{2}}\rightarrowtriangle Y_{\gamma_{1}\alpha}^{m_{1}m_{2}}\rightarrowtriangle Y_{\gamma_{1}\alpha}^{m_{1}i},\quad Y_{\gamma_{1}\alpha}^{m_{1}i}\rightarrowtriangle Y_{\gamma_{1}\alpha}^{m_{1}j},\quad Y_{\gamma_{1}\alpha}^{m_{1}i}\rightarrowtriangle Y_{\gamma_{1}\beta}^{m_{1}i}.
\label{eq: chain of derivations, b1}
\end{equation}
Finally, when $Y_{\gamma_{1}\beta}^{ij}\in\mathcal{Y}$, (\ref{eq: chain of derivations, b1}) gives
\begin{equation}
Y_{\gamma_{1}\gamma_{2}}^{m_{1}m_{2}}\rightarrowtriangle Y_{\gamma_{1}\alpha}^{m_{1}m_{2}}\rightarrowtriangle Y_{\gamma_{1}\alpha}^{m_{1}i}\rightarrowtriangle Y_{\gamma_{1}\beta}^{m_{1}i}\rightarrowtriangle Y_{\gamma_{1}\beta}^{ij}\rightarrowtriangle Y_{\alpha\beta}^{ij}
\label{eq: chain of derivations, b2}
\end{equation}
otherwise we conclude $Y_{\gamma_{1}\beta}^{m_{1}j}\in\mathcal{Y}$ too: therefore, $Y_{\gamma_{1}\beta}^{m_{1}i}\rightarrowtriangle Y_{\gamma_{1}\beta}^{m_{1}j}$ and, together with (\ref{eq: chain of derivations, b1}), $Y_{\gamma_{1}\gamma_{2}}^{m_{1}m_{2}}$ determines each term $Y_{\gamma_{1}\Omega}^{m_{1}A}$ for all $A\in\{i,j\}$ and $\Omega\in\{\alpha,\beta\}$, as well as the product $-Y_{\alpha\gamma_{1}}^{im_{1}}\cdot Y_{\gamma_{1}\beta}^{im_{1}}\cdot Y_{\alpha\gamma_{1}}^{m_{1}j}\cdot Y_{\gamma_{1}\beta}^{m_{1}j}=Y_{\alpha\beta}^{ij}$. 
\end{proof}

In particular, when $\mathfrak{G}(\mathbf{L})=\wp_{k}[n]$ and for any $Y_{\alpha\beta}^{ij}$, both the choices of a root of $F_{\alpha\beta}^{ij}$ are acceptable, as they correspond to the exchange of the roles of $\mathbf{L}$ and $\mathbf{R}$, i.e. the transposition $(\mathbf{L},\mathbf{R})\mapsto (\mathbf{R},\mathbf{L})$, which preserves the terms of the determinantal expansion.

\begin{proof}{(of Theorem \ref{thm: check and recover permutation reductions})} 
\begin{enumerate} 
\item Using a query $\mathbf{t}_{\star}$, we gain information on the candidates $\mathbf{a}$ and $\mathbf{q}$, specifically, we can choose $\mathbf{t}_{\star}:=\mathbf{1}$ to obtain two bounds 
\begin{equation} 
\lambda < \min\,\mathcal{G}(\Psi;\mathbf{1}),
\quad \mu > \max\,\mathcal{G}(\Psi;\mathbf{1}).
\label{eq: upper bound for structural matrix} 
\end{equation}
As will be shown below, this allows one to choose the evaluation points $\mathbf{t}_{0}$ ``generically'', that is, in such a way that different permutations $\Psi_{1}\neq\Psi_{2}$ produce different sums $\Delta(\Psi_{1};\mathbf{t}_{0})\neq\Delta(\Psi_{2};\mathbf{t}_{0})$.   
\item We choose $\mathbf{t}_{0}$ so that 
\begin{equation}
\left|t_{0,s+1}\right|\cdot\left|t_{0,1}^{k-1}\right|\cdot\left|t_{0,s}^{-k}\right| > (2\cdot\#\mathfrak{G}-1)\cdot \mu\cdot\lambda^{-1},\quad s\in [n-1].
\label{eq: ordering of variables for labelling}
\end{equation} 
This provides a strict order $>$ on $\mathcal{G}(\Psi;\mathbf{t}_{0})$
that is compatible with the lexicographic order induced by $|t_{0,n}| > \dots > |t_{0,1}|$. Whether $\Psi_{1}(\mathcal{I})>\Psi_{2}(\mathcal{J})$ according to this lexicographic order, we can define $z:=\max(\Psi_{1}(\mathcal{I})\setminus\Psi_{2}(\mathcal{J}))$ and find 
\begin{eqnarray}
\left|\mathbf{t}_{0}^{\Psi_{1}(\mathcal{I})-\Psi_{2}(\mathcal{J})}\right|& > & \left|t_{0,z}\cdot t_{0,\min\Psi_{1}(\mathcal{I})}^{k-1}\right|\cdot\left|t_{0,\max\Psi_{2}(\mathcal{J})\setminus\Psi_{1}(\mathcal{I})}^{-k}\right|\nonumber \\
& > & \left|t_{0,z}\cdot t_{0,1}^{k-1}\right|\cdot\left|t_{0,z-1}^{-k}\right|.
\label{eq: bounds for differences in permutations}
\end{eqnarray}
In this way, the sums $\sum_{|g|\in\mathcal{Z}}g$ with $\mathcal{Z}\subseteq\mathcal{G}(\Psi;\mathbf{t}_{0})$ are pairwise different: given $\Psi_{1}\neq \Psi_{2}$, label the two permutations so that $\Psi_{1}(\mathcal{Z})>\Psi_{2}(\mathcal{Z})$, consider $\mathcal{Z}:=\mathrm{argmax}\{\Psi_{1}(\mathcal{I}):\,\mathcal{I}\in\mathfrak{G},\,\Psi_{1}(\mathcal{I})\neq\Psi_{2}(\mathcal{I})\}$, and recall the notation $z:=\max\left(\Psi_{1}(\mathcal{Z})\setminus\Psi_{2}(\mathcal{Z})\right)$; then, we have  
\begin{eqnarray}
& & \left|\Delta(\Psi_{1};\mathbf{t}_{0})-\Delta(\Psi_{2};\mathbf{t}_{0})\right| \nonumber \\ 
& \geq & \left|g(\mathcal{Z})\right|\cdot\left(\left|\mathbf{t}_{0}^{\Psi_{1}(\mathcal{Z})}\right|-\left|\mathbf{t}_{0}^{\Psi_{2}(\mathcal{Z})}\right|\right)\nonumber \\ 
& &-\sum_{\mathcal{I}\in\mathfrak{G}, \Psi_{1}(\mathcal{Z})>\Psi_{1}(\mathcal{I})}\left|g(\mathcal{I})\right|\cdot\left(\left|\mathbf{t}_{0}^{\Psi_{1}(\mathcal{I})}\right|+\left|\mathbf{t}_{0}^{\Psi_{2}(\mathcal{I})}\right|\right)
\label{eq: sequence of triangle inequalities}
\end{eqnarray}
by the triangle inequality. Having $\Psi_{1}(\mathcal{Z})>\Psi_{2}(\mathcal{I})$ whether $\Psi_{1}(\mathcal{Z})>\Psi_{1}(\mathcal{I})$, we conclude
\begin{eqnarray}
& &\frac{\left|\Delta(\Psi_{1};\mathbf{t}_{0})-\Delta(\Psi_{2};\mathbf{t}_{0})\right|}{\left|g(\mathcal{Z})\right|\cdot\left|\mathbf{t}_{0}^{\Psi_{1}(\mathcal{I})}\right|}\nonumber \\ 
& \overset{{\scriptstyle \text{(by (\ref{eq: sequence of triangle inequalities}))}}}{\geq} & 1-\left|\mathbf{t}_{0}^{\Psi_{2}(\mathcal{Z})-\Psi_{1}(\mathcal{Z})}\right|\nonumber \\ 
& & -\sum_{\mathcal{I}\in\mathfrak{G}, \Psi_{1}(\mathcal{Z})>\Psi_{1}(\mathcal{I})}\frac{\left|g(\mathcal{I})\right|}{\left|g(\mathcal{Z})\right|}\cdot\left(\left|\mathbf{t}_{0}^{\Psi_{1}(\mathcal{I})-\Psi_{1}(\mathcal{Z})}\right|+\left|\mathbf{t}_{0}^{\Psi_{2}(\mathcal{I})-\Psi_{1}(\mathcal{Z})}\right|\right)\nonumber \\ 
& \overset{{\scriptstyle \text{(by (\ref{eq: bounds for differences in permutations}))}}}{>} & 1-(2\cdot \#\mathfrak{G}-1)\cdot\frac{\mu}{\lambda}\cdot\left|t_{0,z}^{-1}\cdot t_{0,1}^{-k+1}\cdot t_{0,z-1}^{k}\right|>0.
\end{eqnarray}

In particular, the order obtained from this query produces the mapping $\Gamma(\mathcal{A}):=|g(\Psi^{-1}(\mathcal{A}))\cdot \mathbf{t}_{0}^{\mathtt{Ind}(\mathcal{A})}|$ with $\mathcal{A}\in\mathfrak{G}$: being the terms in $\mathcal{G}(\Psi;\mathbf{t}_{0})$ distinct, $\Gamma$ is injective.
\item The ordering of $\mathcal{G}(\Psi;\mathbf{t}_{0})$ is total since $\Gamma$ is injective, so we consider two consecutive terms $g_{u}$, $g_{u+1}$. By (\ref{eq: bounds for differences in permutations}), we can identify the sets $\Psi(\mathcal{I})$ and $\Psi(\mathcal{J})$ such that $g_{u}=\Gamma(\Psi(\mathcal{I}))$ and $g_{u+1}=\Gamma(\Psi(\mathcal{J}))$. 
Due to the exchange relation (\ref{eq: exchange relation}) with $\mathcal{A}:=\Psi(\mathcal{J})$, $\mathcal{B}:=\Psi(\mathcal{I})$, and $\alpha:=\max\left(\Psi(\mathcal{J})\setminus\Psi(\mathcal{I})\right)$, there is $\beta\in\Psi(\mathcal{I})\setminus\Psi(\mathcal{J})$ such that $\Psi(\mathcal{J})_{\beta}^{\alpha}\in\mathfrak{G}$. From $\Gamma(\Psi(\mathcal{J}))>\Gamma(\Psi(\mathcal{I}))$ we infer $\alpha>\beta$; this means $\Gamma(\Psi(\mathcal{J}))>\Gamma(\Psi(\mathcal{J})_{\beta}^{\alpha})$, forcing $\Psi(\mathcal{J})_{\beta}^{\alpha}=\Psi(\mathcal{I})$. We can specify
$\mathcal{I}^{(k-1)}:=\Gamma^{-1}(\mathrm{argmin}\mathcal{G}(\Psi;\mathbf{t}_{0}))$ and, for all $k\leq u\leq n$, 
\begin{equation} 
T_{u}:=\mu\cdot\prod_{\beta=u-k+1}^{u}|t_{0,\beta}|,\quad\mathcal{I}^{(u)}:=\Gamma^{-1}\left(\mathrm{argmin}\mathcal{G}(\Psi;\mathbf{t}_{0})\cap]T_{u},\infty[\right).
\end{equation}
so that $T_{u}$ is an upper bound for $\Psi(\mathcal{I})$ whether $\max\Psi(\mathcal{I})\leq u$; under the hypothesis \texttt{Hp}, we get 
\begin{equation}
\forall u\in[k+1;n]:\quad \mathcal{I}^{(u)}\setminus\mathcal{I}^{(u-1)}=\{\psi_{\mathtt{Hp}}(u)\}.
\label{eq: check on out-of-row cardinality}
\end{equation}
Similarly, we can iteratively construct $\psi_{\mathtt{Hp}}(u)$ for all $u\in[k]$ from the sets $\mathcal{I}^{(u-1)}\setminus \mathcal{I}^{(u)}$.
\item The previous steps hold independently on the determinantal structure we are investigating: now we can check if the query has returned the terms of a determinantal expansion, making use of the previous results. If we attest the condition (\ref{eq: check on out-of-row cardinality}), then we can verify with a single check whether $\Psi$ coincides with the function $\Psi_{\mathtt{Hp}}:=\sum_{u\in\mathcal{I}}\psi_{\mathtt{Hp}}(u)$ generated from $\psi_{\mathtt{Hp}}$ by additivity: indeed, only $\Psi_{\mathtt{Hp}}$ can produce a determinant expansion by Corollary \ref{cor: permutation coding and reductions}, whose hypotheses can be checked with the knowledge we have recovered in the previous steps; in turn, $\Psi=\Psi_{\mathtt{Hp}}$ is equivalent to $\Delta(\Psi;\mathbf{t}_{0})=\Delta(\Psi_{\mathtt{Hp}};\mathbf{t}_{0})$ by construction of $\mathbf{t}_{0}$ in the previous step. We can control this last equality to tell whether $\Psi$ is induced by a permutation $\psi\in\mathcal{S}_{n}$. Furthermore, recalling the invariance (\ref{eq: action of special gauge on R}) that preserves the terms of the expansion up to a permutation, the same output is returned by $\det(\mathbf{a}(\Psi_{\mathtt{Hp}})\cdot \mathrm{diag}(t_{1},\dots,t_{n})\cdot\mathbf{q}(\Psi_{\mathtt{Hp}}))$, where 
\begin{equation} 
\mathbf{a}(\Psi_{\mathtt{Hp}}):= \mathbf{a}\cdot \mathbf{M}(\Psi_{\mathtt{Hp}}^{-1}),\quad \mathbf{q}(\Psi_{\mathtt{Hp}}):= \mathbf{M}(\Psi_{\mathtt{Hp}})\cdot \mathbf{q}
\label{eq: transposed matrices}
\end{equation}
and $\mathbf{M}(\Psi)$ denotes the matrix representation of the permutation $\Psi$. Corollary \ref{cor: permutation coding and reductions} states that the factor multiplying $\mathbf{t}_{0}^{\Psi_{\mathtt{Hp}}(\mathcal{I})}$ corresponds to the minor product $\Delta_{\mathbf{a}(\Psi_{\mathtt{Hp}})}(\mathcal{I})\cdot\Delta_{\mathbf{q}(\Psi_{\mathtt{Hp}})}(\mathcal{I})$. This lets us recover the mapping $\mathcal{I}\mapsto g({\mathcal{I}})$ for the transformed matrices (\ref{eq: transposed matrices}). 
\item Taking into account the position of the sets $\mathcal{I}^{(u)}$ in the output list, from the labelling $\mathcal{A}\mapsto\Gamma(\mathcal{A})\mapsto g(\Psi_{\mathtt{Hp}}^{-1}(\mathcal{A}))$ and the injectivity of $\Gamma$, we obtain the association $\alpha\mapsto \psi_{\mathtt{Hp}}(\alpha)$, $\alpha\in[n]$, using (\ref{eq: check on out-of-row cardinality}).
\item Finally, we can check the existence and recover a decomposition given by the matrices $\mathbf{a}$ and $\mathbf{q}$: if we know $\mathbf{q}$, then we easily get $\Delta_{\mathbf{a}}(\mathcal{I}):=g(\mathcal{I})\cdot \Delta_{\mathbf{q}}(\mathcal{I})^{-1}$, which can be tested to verify whether they are the maximal minors of a matrix $\mathbf{a}$, using Grassmann-Pl\"{u}cker relations, and Assumption \ref{claim: generic matrices}. Even without knowledge of $\mathbf{q}$, we can use explicit information on $\mathfrak{G}$ provided by the previous steps: first, we check assumption (\ref{eq: two generic columns condition}); if it is verified, then we construct the terms $Y_{\alpha_{1}\omega}^{im}$ for each $(m,\omega)\in\mathcal{I}\times\mathcal{I}^{\mathtt{C}}$ by using Proposition \ref{prop: sign ambiguity removal} for observable sets and getting the remaining ones through the procedure defined in the proof of Theorem \ref{thm: rigidity theorem monomial}. 
In this way, we recover $\mathbf{q}$ in the form (\ref{eq: action of special gauge on R}) and, from this information, we retrieve $\mathbf{a}$ using $\Delta_{\mathbf{a}}(\mathcal{I})=g(\mathcal{I})\cdot \Delta_{\mathbf{q}}(\mathcal{I})^{-1}$ as before. 
\end{enumerate}
\end{proof}
The method described above lets us distinguish a permutation $\psi$ of $[n]$, if it exists, even when the explicit form of the permutation $\Psi$ of $\mathfrak{G}$ is unknown, that is, when we do not have information on $\Psi$ beyond $\mathcal{G}(\Psi;\mathbf{t}_{0})$ in (\ref{eq: CB terms and quasi-determinantal coupling permutation}). In fact, the input, the intermediate steps, and the output produced by a query involve only unlabelled lists of minor products. 

The method uses two queries, namely, $\mathbf{1}$ (introduced in Step 1) and $\mathbf{t}_{0}$ (introduced in Step 2): even if the two permutations $\Psi_{\star}$ and $\Psi$ involved in these steps differ, the method still provides an answer on the existence and form of a combinatorial reduction for the second query $\Psi$, as $\mathbf{1}$ returns information on $\mathbf{a}$ that is independent of $\Psi$. Even Assumption \ref{claim: generic matrices}, which entails Theorem \ref{thm: rigidity theorem monomial}, holds independently of the exponents $\Psi(\mathcal{I})$ and the matrix $\mathbf{Q}(\mathbf{t})$, since the existence of two generic columns only refers to the matroid of $\mathbf{A}(\mathbf{1})$. This observation further stresses the robustness of the method against the uncertainty about $\Psi$, which fits into unlabelled sensing related to homogeneous coordinates of $k$-dimensional subspaces of $\mathbb{C}^{n}$, as noted in the Introduction. 

Finally, we specify Theorem \ref{thm: check and recover permutation reductions} for rational determinantal expansions, i.e. $g_{\mathcal{I}}\in \mathbb{Q}$ for all $\mathcal{I}\in\mathfrak{G}$. 
\begin{cor}
\label{cor: combinatorial reduction from two scalars when integer}
Under the hypotheses of Theorem \ref{thm: check and recover permutation reductions} and, in addition, the assumption $g(\mathcal{I})\in\mathbb{Q}$, $\mathcal{I}\in\mathfrak{G}$, we can check the occurrence of a combinatorial reduction and retrieve two matrices $\mathbf{a}$ and $\mathbf{q}$ as in (\ref{eq: transposed matrices}) only based on two scalar outputs, namely, the maximum $\max\mathcal{G}(\Psi;\mathbf{1})$ for a first query $\mathbf{1}$ and the candidate determinant $\Delta(\Psi;\mathbf{t}_{0})$ for a second query $\mathbf{t}_{0}$.
\end{cor}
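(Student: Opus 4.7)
The plan is to reduce Corollary \ref{cor: combinatorial reduction from two scalars when integer} to Theorem \ref{thm: check and recover permutation reductions} by reconstructing the unlabelled multiset $\mathcal{G}(\Psi;\mathbf{t}_{0})$ from the scalar $\Delta(\Psi;\mathbf{t}_{0})$, with the bound $\mu^{\star}:=\max\mathcal{G}(\Psi;\mathbf{1})$ serving as a priori control on the coefficients. Once this list is recovered, Steps~3--6 of the proof of Theorem \ref{thm: check and recover permutation reductions} apply unchanged and yield both the existence test and the matrices $\mathbf{a}$, $\mathbf{q}$.

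First, the query at $\mathbf{1}$ gives $\mu^{\star}=\max_{\mathcal{I}\in\mathfrak{G}}|g(\mathcal{I})|$, a uniform bound on the rational coefficients, since $\mathbf{1}^{\Psi(\mathcal{I})}=1$ for every $\mathcal{I}\in\mathfrak{G}$. I then choose $\mathbf{t}_{0}$ with positive components growing rapidly enough that the monomials $\mathbf{t}_{0}^{\mathtt{Ind}(\mathcal{J})}$, $\mathcal{J}\in\mathfrak{G}$, sit at pairwise distinct orders of magnitude, with consecutive ratios exceeding $\#\mathfrak{G}\cdot\mu^{\star}$ by a margin large enough to accommodate the denominators compatible with the bound $\mu^{\star}$ and with the structural form $g(\mathcal{I})=\Delta_{\mathbf{a}}(\mathcal{I})\cdot\Delta_{\mathbf{q}}(\mathcal{I})$. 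This strengthens the spacing condition (\ref{eq: ordering of variables for labelling}) and transfers the total ordering used in the proof of Theorem \ref{thm: check and recover permutation reductions} from the unlabelled list to the scalar output.

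With this spacing, the contribution $g(\mathcal{I})\cdot\mathbf{t}_{0}^{\Psi(\mathcal{I})}$ carrying the largest exponent dominates $\Delta(\Psi;\mathbf{t}_{0})$ by more than the aggregate magnitude of all smaller terms, so it can be extracted by a single magnitude inequality: this identifies both the scale $\mathbf{t}_{0}^{\Psi(\mathcal{I})}$ (and hence $\Psi(\mathcal{I})\in\mathfrak{G}$, by quotient with the known monomial) and the rational coefficient $g(\mathcal{I})$. Iterated subtraction then recovers the full labelled correspondence $\mathcal{J}\mapsto g(\Psi^{-1}(\mathcal{J}))$ on $\mathfrak{G}$, and consequently the unlabelled list $\mathcal{G}(\Psi;\mathbf{t}_{0})=\{|g(\mathcal{I})|\cdot\mathbf{t}_{0}^{\mathtt{Ind}(\Psi(\mathcal{I}))}:\mathcal{I}\in\mathfrak{G}\}\setminus\{0\}$. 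Feeding this list into the protocol of Theorem \ref{thm: check and recover permutation reductions} returns the candidate permutation $\psi_{\mathtt{Hp}}$, verifies the Grassmann-Pl\"ucker relations for the implied expansion, and reconstructs $\mathbf{a}$ and $\mathbf{q}$ via Proposition \ref{prop: sign ambiguity removal} and the gauge procedure used in the proof of Theorem \ref{thm: rigidity theorem monomial}.

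The main obstacle is guaranteeing that the decoding is unambiguous without a priori knowledge of the denominators of the rational $g(\mathcal{I})$: the scalar $\mu^{\star}$ bounds $|g(\mathcal{I})|$ but not its numerator or denominator separately. I see two ways to handle this. Either one selects $\mathbf{t}_{0}$ with entries drawn from a family algebraically independent over $\mathbb{Q}$, so that the evaluation map is $\mathbb{Q}$-linearly injective on the span of $\{\mathbf{t}_{0}^{\mathtt{Ind}(\mathcal{J})}\}_{\mathcal{J}\in\mathfrak{G}}$ and $\Delta(\Psi;\mathbf{t}_{0})$ determines the coefficients uniquely; or one picks $\mathbf{t}_{0}$ as rapidly growing integers tuned to the matrix-dependent denominators compatible with $\mu^{\star}$ and the factorisation $g(\mathcal{I})=\Delta_{\mathbf{a}}(\mathcal{I})\cdot\Delta_{\mathbf{q}}(\mathcal{I})$. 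Either route makes the decoding well-defined and completes the reduction to Theorem \ref{thm: check and recover permutation reductions}.
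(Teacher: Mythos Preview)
Your overall plan---decode the individual terms $g(\mathcal{I})\cdot\mathbf{t}_{0}^{\Psi(\mathcal{I})}$ from the single scalar $\Delta(\Psi;\mathbf{t}_{0})$ and then feed the recovered list into the protocol of Theorem~\ref{thm: check and recover permutation reductions}---is the same as the paper's, and you correctly isolate the denominator issue as the only obstruction. But neither of your two proposed fixes actually closes the gap. Route~(b) is circular: the ``matrix-dependent denominators compatible with $\mu^{\star}$ and the factorisation $g(\mathcal{I})=\Delta_{\mathbf{a}}(\mathcal{I})\cdot\Delta_{\mathbf{q}}(\mathcal{I})$'' are exactly what you are trying to recover, and a bound on $|g(\mathcal{I})|$ alone constrains neither numerator nor denominator separately, so no finite spacing of the $t_{0,s}$ lets iterated subtraction pin down a unique rational coefficient. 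Route~(a) does give uniqueness, but it abandons the magnitude-based peeling you set up in the previous paragraph and replaces it by an appeal to $\mathbb{Q}$-linear independence of transcendental monomials; that is not an effective decoding of a scalar, and it sits outside the computational framework the corollary is phrased in.

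The paper resolves this with one preliminary move you are missing: reduce to $g(\mathcal{I})\in\mathbb{Z}$ by clearing a common denominator, which is absorbed into $\mathbf{a}$ via the left $GL_{k}(\mathbb{Z})$-action. Once the coefficients are integers the lower bound $\lambda=1$ in (\ref{eq: upper bound for structural matrix}) is free, and one can take $\mathbf{t}_{0}\in\mathbb{N}^{n}$ (specifically powers of the integer $e^{L}=2+\max\mathcal{G}(\Psi;\mathbf{1})$) so that $\Delta(\Psi;\mathbf{t}_{0})$ is literally the base-$e^{L}$ positional representation of the whole list: each block of digits encodes one integer $g(\Psi^{-1}(\mathcal{J}))$, and the block position, read in base $k$, encodes $\mathcal{J}$. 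This turns your iterated-subtraction heuristic into exact digit extraction and removes the ambiguity entirely.
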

\begin{proof}
We can reduce to $g(\mathcal{I})\in\mathbb{Z}$, $\mathcal{I}\in\mathfrak{G}$, by scaling each term through a common integer factor, which corresponds to the left action of $GL_{k}(\mathbb{Z})$ on $\mathbb{C}^{k\times n}$. Then, we can set $\lambda=1$ in (\ref{eq: upper bound for structural matrix}) and use $L:=\log (2+\max\mathcal{G}(\Psi;\mathbf{1}))>1$ to choose $\mu:=\exp(L) >\max\mathcal{G}(\Psi;\mathbf{1})$. Based on the same argument as in Step 2 in the previous proof, we further specify $\mathbf{t}_{0}$ setting 
\begin{equation}
t_{0,s}:=\exp\left(\frac{L}{k-1}\left\lceil \log(2\cdot\#\mathfrak{G})+L\right\rceil \cdot(k^{s-1}-1)\right),\quad s\in[n]
\end{equation}
where $\left\lceil \cdot\right\rceil$ denotes the ceiling function. Indeed, we obtain $\mathbf{t}_{0}\in\mathbb{N}^{n}$ and 
\begin{equation}
\log t_{0,s+1}-k\cdot \log t_{0,s}=L\cdot\left\lceil \log(2\cdot\#\mathfrak{G})+L\right\rceil >\log(2\cdot\#\mathfrak{G}-1)+ L 
\end{equation} 
which is equivalent to (\ref{eq: ordering of variables for labelling}). We split the base-$\mathrm{e}^{L}$ representation of $\Delta(\Psi;\mathbf{t}_{0})$ into blocks of length $\lceil \log(2\cdot\#\mathfrak{G})+L\rceil$, numbering their position starting from the least significant block: each non-vanishing block encodes a term $g(\Psi^{-1}(\mathcal{I}))$, and the label $\Psi^{-1}(\mathcal{I})$ of the block at position $P$ is obtained by the expansion of $P$ in base $k$. Adapting Steps 4 and 6 of the previous proof, we get the thesis.
\end{proof}






\end{document}